\documentclass[a4paper,11pt,reqno]{amsart}
\pdfoutput=1

\usepackage{amsmath,amssymb,amsthm,mathrsfs}
\usepackage{mathtools}
\usepackage[all,cmtip]{xy}
\usepackage{bm}
\usepackage[a4paper,margin=1in]{geometry}
\usepackage{lmodern}
\usepackage[T1]{fontenc}

\usepackage{comment}

\usepackage{microtype}

\usepackage[UKenglish]{isodate}

\usepackage[shortlabels]{enumitem}

\swapnumbers

\raggedbottom

\setcounter{tocdepth}{1}

\theoremstyle{plain}

\newtheorem{theorem}[subsection]{Theorem}
\newtheorem{proposition}[subsection]{Proposition}
\newtheorem{lemma}[subsection]{Lemma}
\newtheorem{corollary}[subsection]{Corollary}
\newtheorem{conjecture}[subsection]{Conjecture}

\theoremstyle{definition} 
\newtheorem{definition}[subsection]{Definition}
\newtheorem{example}[subsection]{Example}

\theoremstyle{remark}
\newtheorem{remark}[subsection]{Remark}
\newtheorem{observation}[subsection]{Observation}
\newtheorem{recall}[subsection]{Recall}
\newtheorem{perspective}[subsection]{Perspective}

\makeatletter
\let\c@subsection\c@equation
\makeatother
\numberwithin{equation}{section}

\usepackage[hidelinks]{hyperref}

\newcommand{\lra}{\longrightarrow}
\newcommand{\Hom}{\mathrm{Hom}}
\newcommand{\bHom}{\mathbf{Hom}}
\newcommand{\Fun}{\mathbf{Fun}}
\newcommand{\Set}{\mathbf{Set}}
\newcommand{\Cat}{\mathbf{Cat}}
\newcommand{\sSet}{\mathbf{sSet}}
\newcommand{\ssSet}{\mathbf{ssSet}}
\newcommand{\ho}{\operatorname{ho}}
\newcommand{\ob}{\operatorname{ob}}
\newcommand{\cd}[2][]{\vcenter{\hbox{\xymatrix#1{#2}}}}
\newcommand{\dtwocell}[3][0.5]{\ar@{}[#2] \ar@{=>}?(#1)+/u  0.2cm/;?(#1)+/d 0.2cm/^{#3}}
\newcommand{\fatpullbackcorner}[1][dr]{\save*!/#1-1.75pc/#1:(-1,1)@^{|-}\restore}
\newcommand{\fatleftpullbackcorner}[1][dl]{\save*!/#1-1.75pc/#1:(-1,1)@^{|-}\restore}
\newcommand{\fatpushoutcorner}[1][dr]{\save*!/#1+1.75pc/#1:(1,-1)@^{|-}\restore}
\newcommand{\Cyl}{\mathbf{Cyl}}

\newcommand{\pullbackcorner}[1][dr]{\save*!/#1-1.2pc/#1:(-1,1)@^{|-}\restore}
\newcommand{\leftpullbackcorner}[1][dl]{\save*!/#1-1.2pc/#1:(-1,1)@^{|-}\restore}

\newcommand{\hdash}{\rotatebox[origin=c]{90}{$\vdash$}}

\title{Joyal's cylinder conjecture}
\author{Alexander Campbell} 
\address{Centre of Australian Category Theory \\ Macquarie University \\ NSW 2109 \\ Australia}
\urladdr{http://web.science.mq.edu.au/~alexc/}

\subjclass[2010]{18G30, 18G55, 55U10, 55U35}
\date{7 November 2019}

\begin{document}

\begin{abstract}
For each pair of simplicial sets $A$ and $B$, the category $\Cyl(A,B)$ of cylinders (also called correspondences) from $A$ to $B$ admits a model structure induced from Joyal's model structure for quasi-categories. In this paper, we prove Joyal's conjecture that a cylinder $X \in \Cyl(A,B)$ is fibrant if and only if the canonical morphism $X \lra A \star B$ is an inner fibration, and that a morphism between fibrant cylinders in $\Cyl(A,B)$ is a fibration if and only if it is an inner fibration. We use this result to give a new proof of a characterisation of covariant equivalences due to Lurie, which avoids the use of the straightening theorem. 
In an appendix, we introduce a new family of model structures on the slice categories $\sSet/B$, whose fibrant objects are the inner fibrations with codomain $B$, which we use to prove some new results about inner anodyne extensions and inner fibrations.
\end{abstract}

\maketitle

\tableofcontents

\section{Introduction} \label{secintro}
Recall that the \emph{collage}\footnote{This construction is due to B\'enabou \cite{ben2dimlim,bendist}; its name is due to Street and Walters (see \cite{MR2048315}).} of a profunctor $M \colon A \lra B$ (i.e.\ a functor $M \colon A^\mathrm{op}\times B\lra \Set$) is the category $C(M)$ whose set of objects is the disjoint union $\ob C(M) = \ob A + \ob B$, whose hom-sets are given by
\begin{equation*}
C(M)(x,y) = \begin{cases}
A(x,y) &\text{if $x,y \in A$} \\
B(x,y) &\text{if $x,y \in B$} \\
M(x,y) &\text{if $x \in A$, $y \in B$} \\
\emptyset &\text{if $x \in B$, $y \in A$,}
\end{cases}
\end{equation*}
and whose identities and composition are defined in the evident way by those of the categories $A$ and $B$, and by the action of $M$ on morphisms.  
There is a unique functor $C(M) \lra \mathbf{2} = \{0 < 1\}$ whose fibres above $0$ and $1$ are $A$ and $B$ respectively. 
B\'enabou observed that the collage construction defines an equivalence 
between the category of profunctors (between arbitrary categories) and the slice category $\Cat/\mathbf{2}$ (see \cite{stpow}).

In quasi-category theory, the category of \emph{cylinders} (or \emph{correspondences}) is defined to be the slice category $\sSet/\Delta[1]$.  
By analogy with the previous paragraph, a cylinder $p \colon X \lra \Delta[1]$ may be thought of as a model for the collage of a quasi-categorical profunctor from $\partial_0X := p^{-1}(0)$ to $\partial_1X := p^{-1}(1)$. 
(See \cite[Chapter 7]{joyalbarcelona}, \cite[\S14]{joyalnotes}, \cite[\S2.3.1]{MR2522659}, and \cite{stevbifib} for further details and intuition concerning cylinders/correspondences.)

For each pair of simplicial sets $A$ and $B$, the category $\Cyl(A,B)$ of \emph{cylinders from $A$ to $B$}, or \emph{$(A,B)$-cylinders}, is defined to be the fibre of the functor 
\begin{equation*}
(\partial_0,\partial_1) \colon \sSet/\Delta[1] \lra \sSet \times \sSet
\end{equation*}
 over the object $(A,B)$. 
Thus an object of $\Cyl(A,B)$ is a simplicial set $X$ (the \emph{underlying simplicial set} of the cylinder) equipped with a map $X \lra \Delta[1]$ whose fibres above $0$ and $1$ are $A$ and $B$ respectively, as displayed below.
\begin{equation*}
\cd{
A \ar[r] \ar[d] \pullbackcorner & X \ar[d]  & \ar[l] B \leftpullbackcorner \ar[d] \\
\{0\} \ar[r] & \Delta[1] & \{1\} \ar[l]
}
\end{equation*}
 Note that the initial and terminal objects of $\Cyl(A,B)$ are the disjoint union $A + B$ and join $A \star B$ respectively, equipped with the manifest structure maps. Hence, for each cylinder $X \in \Cyl(A,B)$, there exist canonical morphisms $A + B \lra X$ and $X \lra A \star B$.

In 
\cite[\S14.6]{joyalnotes}, Joyal described a model structure on $\Cyl(A,B)$ -- which we call the \emph{Joyal model structure} on $\Cyl(A,B)$ -- created\footnote{A model structure on a category $\mathcal{A}$ is said to be \emph{created} by a functor $F \colon \mathcal{A} \lra \mathcal{C}$ from a model structure on $\mathcal{C}$ if a morphism $f$ of $\mathcal{A}$ is a cofibration, weak equivalence, or fibration in the model structure on $\mathcal{A}$ if and only if the morphism $Ff$ is a cofibration, weak equivalence, or fibration respectively in the model structure on $\mathcal{C}$.}
 by the forgetful functor $\Cyl(A,B) \lra \sSet$ from the Joyal model structure for quasi-categories on $\sSet$, 
about which he made the following conjecture. (Note that, by definition, an object $X \in \Cyl(A,B)$ is fibrant in this model structure if and only if the canonical morphism $X \lra A \star B$ is a fibration in the Joyal model structure for quasi-categories on $\sSet$.)

\begin{conjecture}[Joyal] \label{joyalconj}
A cylinder $X \in \Cyl(A,B)$ is fibrant if and only if the canonical morphism $X \lra A \star B$ is an inner fibration, and a morphism between fibrant cylinders in $\Cyl(A,B)$ is a fibration if and only if it is an inner fibration.
\end{conjecture}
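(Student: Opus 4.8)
The plan is to derive both statements from an elementary combinatorial observation about joins, together with the characterisation of the fibrations of Joyal's model structure as the inner fibrations that moreover have the right lifting property against the endpoint inclusion $\{0\}\hookrightarrow E$ (equivalently $\{1\}\hookrightarrow E$), where $E$ is the nerve of the free-living isomorphism $\{0\cong 1\}$; equivalently, that the trivial cofibrations of Joyal's model structure are generated, under cobase change, transfinite composition and retracts, by the inner anodyne extensions together with $\{0\}\hookrightarrow E$. This characterisation is the one genuinely nontrivial ingredient, and I expect securing it in exactly this form to be the main obstacle: it is a theorem of Joyal whose original proof is delicate, so I would either cite a complete account of it or --- as seems most in keeping with the present paper --- re-derive the facts about inner anodyne extensions and inner fibrations on which it rests from the model structures on the slice categories $\sSet/B$ constructed in the appendix, whose fibrant objects are precisely the inner fibrations with codomain $B$.

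The `only if' direction of each statement is immediate: the inner horn inclusions $\Lambda^k[n]\hookrightarrow\Delta[n]$ with $0<k<n$ are trivial cofibrations in Joyal's model structure, so every fibration of $\Cyl(A,B)$, and every structure map $X\lra A\star B$ of a fibrant cylinder, is an inner fibration.

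For the converse directions I isolate two facts. The first is about a join $A\star B$: any $n$-simplex all of whose vertices lie in $A$ is an $n$-simplex of the sub-simplicial set $A\hookrightarrow A\star B$ (dually for $B$), and $A\star B$ has no edge from a vertex of $B$ to a vertex of $A$; since the two vertices of $E$ are joined by edges in both directions, it follows that every map $E\lra A\star B$ factors through $A\hookrightarrow A\star B$ or through $B\hookrightarrow A\star B$. The second is about cylinders: because a cylinder $X\in\Cyl(A,B)$ has $A$ and $B$ as the fibres of $X\lra\Delta[1]$, the canonical map $p\colon X\lra A\star B$ satisfies $p^{-1}(A)=A$ and $p^{-1}(B)=B$ as subobjects of $X$, with $p$ restricting to the identity on each; likewise any morphism of cylinders $f\colon X\lra Y$ satisfies $f\circ(A\hookrightarrow X)=(A\hookrightarrow Y)$ and $f^{-1}(A)=A$ (dually for $B$).

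Granting the characterisation, the converses are then short. Let $p\colon X\lra A\star B$ be an inner fibration with $X$ a cylinder; to see that $X$ is fibrant it remains only to solve each lifting problem of $p$ against $\{0\}\hookrightarrow E$, say one consisting of a vertex $v$ of $X$ and a map $\bar e\colon E\lra A\star B$ with $p(v)=\bar e(0)$. Factor $\bar e$ through $A\hookrightarrow A\star B$, the case of $B$ being symmetric, as $\bar e=(A\hookrightarrow A\star B)\circ e'$; then $p(v)$ lies in $A$, and since $p^{-1}(A)=A$ with $p$ the identity there, the vertex $v$ lies in $A\hookrightarrow X$ and equals $e'(0)$ in it, so $E\xrightarrow{e'}A\hookrightarrow X$ is the desired lift. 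The identical argument applies to a morphism of cylinders $f\colon X\lra Y$ and a lifting problem against $\{0\}\hookrightarrow E$ whose attaching map $\bar e\colon E\lra Y$, upon composition with $Y\lra A\star B$ and use of the facts above, still factors through $A\hookrightarrow Y$ or $B\hookrightarrow Y$; this shows that every inner fibration between cylinders is a fibration of $\Cyl(A,B)$, so that in particular the fibrancy hypotheses in the second statement are not needed for the argument. This proves both parts of the conjecture.
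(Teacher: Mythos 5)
Your argument has a genuine gap at the step you yourself identify as ``the one genuinely nontrivial ingredient'': the claim that the trivial cofibrations of the Joyal model structure on $\sSet$ are generated by the inner anodyne extensions together with $\{0\} \hookrightarrow E$ (equivalently, that every inner fibration with the right lifting property against $\{0\}\hookrightarrow E$ is a Joyal fibration). This is not a theorem. Joyal's characterisation of fibrations as isofibrations holds only for morphisms \emph{between quasi-categories}; no explicit generating set for the trivial cofibrations of the Joyal model structure is known, and the paper's introductory remark explains that the closely related statement one would need here --- that inner fibrations lift against all monic, bijective-on-$0$-simplices weak categorical equivalences (note that every trivial cofibration in $\Cyl(A,B)$ is such a map) --- is \emph{false} in general, by the counterexample of \cite{contrasample}: there is a monic, bijective-on-$0$-simplices weak categorical equivalence that is not inner anodyne, hence fails to lift against some inner fibration. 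Your fallback of rederiving the needed facts from the parametrised Joyal model structures of Appendix \ref{secparajoyal} does not close the gap either: the weak equivalences of the parametrised Joyal model structure on $\sSet/B$ form a \emph{strictly smaller} class than the weak categorical equivalences (Observation \ref{bjday}), so fibrancy there says nothing directly about fibrancy in the Joyal model structure on $\Cyl(A,B)$.

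What your argument does correctly establish is that the structure map $p \colon X \lra A\star B$ of any cylinder, and any inner fibration between cylinders, has the right lifting property against $\{0\}\hookrightarrow E$ over $A \star B$ --- your observation that every map $E \lra A\star B$ factors through $A$ or through $B$, and that $p^{-1}(A)=A$ with $p$ the identity there, is sound. Combined with Joyal's theorem for maps between quasi-categories, this yields exactly the special case in which $A$ and $B$ are quasi-categories (so that $A\star B$ and $X$ are too); this is the paper's Lemma \ref{easycase}, which is proved there by an essentially equivalent observation phrased in terms of discrete isofibrations of homotopy categories. The general case is precisely what cannot be obtained this way: the paper instead constructs the ambivariant model structure on $\Cyl(A,B)$ whose fibrant objects and fibrations are as in the conjecture (Theorem \ref{ambicylmod}), proves base-change results (Proposition \ref{pfqadj}, Theorem \ref{pfqequiv}) via a Reedy/Bousfield analysis relative to the covariant model structure, and transfers the quasi-category case along a fibrant replacement of $A$ and $B$ (Theorem \ref{mainthm}).
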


The main goal of this paper is to prove this conjecture (see Theorem \ref{conjthm}; see \S\ref{proofoutline} below for an outline of our proof). Note that the special case of this conjecture  in which $A$ and $B$ are quasi-categories is easily proven (see \cite[Lemmas 3.7 and 3.8]{stevbifib} or Lemma \ref{easycase} below); we prove it for \emph{every} pair of simplicial sets $A$ and $B$.

\begin{remark}
For many years, it was an open question whether every monic bijective-on-$0$-simplices weak categorical equivalence is inner anodyne (see \cite[\S2.10]{joyalnotes}). Were this so, the general case of Joyal's conjecture would be as easy to prove as the special case in which $A$ and $B$ are quasi-categories (cf.\ Remark \ref{ifonly}). However,  the author recently proved \cite{contrasample} that this is not so; hence a different argument is required to prove Joyal's conjecture.
\end{remark}

Our proof of Joyal's conjecture is contained in \S\S\ref{secmodstrcyl}--\ref{joyalconjsec} of this paper. (The contents of each section may be gleaned from its opening paragraph.) In the final section \S\ref{luriehalfsec}, we use this result to give a new proof of Lurie's characterisation of covariant equivalences (see Theorem \ref{luriecor}), which avoids the use of the straightening theorem \cite[Theorem 2.2.1.2]{MR2522659}. 
In an appendix (Appendix \ref{secparajoyal}), we introduce a new family of model structures on the slice categories $\sSet/B$ (which we call the \emph{parametrised Joyal model structures})  whose fibrant objects are the inner fibrations with codomain $B$, using which we prove some new results about inner anodyne extensions and inner fibrations.

\subsection{Outline of proof} \label{proofoutline}
Our proof of Joyal's conjecture may be outlined as follows.
\begin{enumerate}[leftmargin=*]
\item \label{stepone} We construct (in Theorem \ref{ambicylmod}) a model structure on $\Cyl(A,B)$, which we call the \emph{ambivariant model structure}, which has the same cofibrations as the Joyal model structure (i.e.\ the monomorphisms), but whose (fibrations between) fibrant objects are precisely as described in Joyal's conjecture. 
\item  \label{steptwo} We observe that Joyal's conjecture is therefore equivalent to the statement that, on the category $\Cyl(A,B)$, the Joyal model structure and the ambivariant model structure coincide. In particular, we know that these two model structures do coincide if $A$ and $B$ are quasi-categories (see Corollary \ref{easycor}).
\item \label{stepthree} Since every fibration in the Joyal model structure is in particular an inner fibration, it follows that every ambivariant equivalence in $\Cyl(A,B)$ is a weak categorical equivalence. It remains to prove the converse. 
\item \label{stepfour} For each pair of weak categorical equivalences $u \colon A \lra A'$ and $v \colon B \lra B'$ in $\sSet$, we prove that the pushforward functor (see \S\ref{ppadjsec})
$$(u,v)_! \colon \Cyl(A,B) \lra \Cyl(A',B') $$
\begin{enumerate}[(a)]
\item preserves weak categorical equivalences (see Proposition \ref{pfqadj}), and
\item reflects ambivariant equivalences (see Theorem \ref{pfqequiv}).
\end{enumerate}
\item \label{stepfive} It then remains to argue as follows (see Theorem \ref{mainthm}). Let $u \colon A \lra A'$ and $v \colon B \lra B'$ be weak categorical equivalences in $\sSet$ such that $A'$ and $B'$ are quasi-categories.  
For any morphism $f$ in $\Cyl(A,B)$, $f$ is a weak categorical equivalence $\implies$ $(u,v)_!(f)$ is a weak categorical equivalence (by (\ref{stepfour})(a)) $\implies$ $(u,v)_!(f)$ is an ambivariant equivalence (by (\ref{steptwo}), since $A'$ and $B'$ are quasi-categories) $\implies$ $f$ is an ambivariant equivalence (by (\ref{stepfour})(b)). This completes the proof.
\end{enumerate}

\section{Model structures for cylinders} \label{secmodstrcyl}
The goal of this section is to carry out step (\ref{stepone}) of the proof of Joyal's conjecture outlined in \S\ref{proofoutline}. For each pair of simplicial sets $A$ and $B$, we construct both the \emph{Joyal model structure} (Theorem \ref{joyalcylmod}) and the \emph{ambivariant model structure} (Theorem \ref{ambicylmod})  on the category $\Cyl(A,B)$ described in \S\ref{secintro}. (Note that the construction of the latter model structure involves the parametrised Joyal model structures introduced in Appendix \ref{secparajoyal}.) We shall construct both of these model structures by the following general technique. 

\subsection{Restricting model structures}
We say that a model structure on a category $\mathcal{C}$ \emph{restricts} to a model structure on a full subcategory $\mathcal{A}$ of $\mathcal{C}$ if the full inclusion $\mathcal{A} \lra \mathcal{C}$ creates a model structure on $\mathcal{A}$  from the model structure on $\mathcal{C}$, that is, if the classes consisting of those morphisms in $\mathcal{A}$ which are cofibrations, weak equivalences, and fibrations respectively in the model structure on $\mathcal{C}$ form a model structure on $\mathcal{A}$.

\begin{proposition} \label{restprop}
Let $\mathcal{C}$ be a cofibrantly generated model category, and let $\mathcal{A}$ be a full subcategory of $\mathcal{C}$ which is both reflective and coreflective via an adjoint triple $L \dashv I \dashv R$.
Then the model structure on $\mathcal{C}$ restricts to a model structure on $\mathcal{A}$ if and only if the adjunction
\begin{equation*}
\xymatrix{
\mathcal{C} \ar@<-1.5ex>[rr]^-{\hdash}_-{IR} && \ar@<-1.5ex>[ll]_-{IL} \mathcal{C}}
\end{equation*}
is a Quillen adjunction.
\end{proposition}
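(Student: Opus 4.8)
The plan is to establish the two implications separately, using throughout three structural facts: that $IL \dashv IR$ is genuinely an adjunction, obtained by composing $L \dashv I$ with $I \dashv R$; that the inclusion $I$, having both a left and a right adjoint, preserves all small limits and colimits; and that $\mathcal A$, being reflective and coreflective in the bicomplete category $\mathcal C$, is itself bicomplete. For the ``only if'' direction the argument is short: if the model structure restricts to $\mathcal A$, then by definition $I$ preserves (indeed reflects) cofibrations, trivial cofibrations, fibrations and trivial fibrations, so $I$ is at once a left and a right Quillen functor; hence its left adjoint $L$ is left Quillen and its right adjoint $R$ is right Quillen, and therefore $IL = I \circ L$ is left Quillen while $IR = I \circ R$ is right Quillen, i.e.\ $IL \dashv IR$ is a Quillen adjunction.

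For the ``if'' direction I would declare a morphism of $\mathcal A$ to be a cofibration, weak equivalence, or fibration precisely when its image under $I$ is one in $\mathcal C$, and verify the model category axioms for the resulting classes $\mathrm{Cof}_{\mathcal A}$, $\mathrm{W}_{\mathcal A}$, $\mathrm{Fib}_{\mathcal A}$. Two-out-of-three and closure under retracts for $\mathrm W_{\mathcal A}$ are immediate, since $I$ preserves composites and retracts and $\mathrm W$ has those properties. The lifting axioms come for free from the full faithfulness of $I$: a lifting problem in $\mathcal A$ with edges in the appropriate classes becomes, after applying $I$, a solvable lifting problem in $\mathcal C$, and its solution is a morphism of $\mathcal C$ between objects of $\mathcal A$, hence a (unique) morphism of $\mathcal A$ solving the original problem. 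Everything therefore reduces to constructing the two functorial factorisations in $\mathcal A$, and this is the main obstacle.

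Here is where the hypothesis is used. The naive idea of factoring $If$ in $\mathcal C$ and then repairing the middle object by applying the reflection $IL$ or the coreflection $IR$ fails, since each of these functors damages one half of the factorisation; instead I would transport the \emph{generators}. Fixing generating (trivial) cofibrations $\mathcal I$, $\mathcal J$ for $\mathcal C$, form their images $L\mathcal I$, $L\mathcal J$ in $\mathcal A$. The hypothesis that $IL \dashv IR$ is Quillen says exactly that $IL$ carries $\mathcal I$ into the cofibrations and $\mathcal J$ into the trivial cofibrations of $\mathcal C$; combined with the fact that $I$ and $L$ preserve colimits, this shows $I$ sends $(L\mathcal I)$-cell complexes to cofibrations and $(L\mathcal J)$-cell complexes to trivial cofibrations of $\mathcal C$. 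Dually, the adjunction $L \dashv I$ identifies the morphisms of $\mathcal A$ with the right lifting property against $L\mathcal I$ (resp.\ $L\mathcal J$) with those $p$ for which $Ip$ has the right lifting property against $\mathcal I$ (resp.\ $\mathcal J$), that is --- since $\mathcal C$ is cofibrantly generated --- with the trivial fibrations (resp.\ fibrations) of $\mathcal A$. Running the small object argument in $\mathcal A$ (legitimate: $\mathcal A$ is cocomplete, and the domains of $L\mathcal I$, $L\mathcal J$ are small because $\Hom_{\mathcal A}(L{-},{-}) \cong \Hom_{\mathcal C}({-},I{-})$ and $I$ preserves filtered colimits) produces weak factorisation systems $(L\mathcal I\text{-cof},\, L\mathcal I\text{-inj})$ and $(L\mathcal J\text{-cof},\, L\mathcal J\text{-inj})$ on $\mathcal A$; and the ``free lifting'' argument identifies their left classes as exactly $\mathrm{Cof}_{\mathcal A}$ and $\mathrm W_{\mathcal A} \cap \mathrm{Cof}_{\mathcal A}$. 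Finally, having the two weak factorisation systems $(\mathrm{Cof}_{\mathcal A},\, \mathrm W_{\mathcal A} \cap \mathrm{Fib}_{\mathcal A})$ and $(\mathrm W_{\mathcal A} \cap \mathrm{Cof}_{\mathcal A},\, \mathrm{Fib}_{\mathcal A})$ together with the established properties of $\mathrm W_{\mathcal A}$, I would invoke the standard recognition criterion for model structures to conclude; this also exhibits the restricted model structure as cofibrantly generated, with generating sets $L\mathcal I$ and $L\mathcal J$.
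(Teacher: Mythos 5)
Your proof is correct, but for the substantive (``if'') direction it takes a genuinely different route from the paper. The paper disposes of the existence of the right-induced model structure in one stroke by citing the Drummond--Cole--Hackney criterion for adjoint triples \cite[Theorem 2.3]{MR3937590}, which under the hypothesis that $IL \dashv IR$ is a Quillen adjunction yields a model structure on $\mathcal{A}$ with fibrations and weak equivalences created by $I$; the only remaining work there is to identify the cofibrations of that structure with the $I$-preimages of the cofibrations of $\mathcal{C}$ (necessity because $I$ is left Quillen, sufficiency because $L$ is left Quillen and $I$ is fully faithful). You instead inline a self-contained construction: transport the generating sets along $L$, run the small object argument in $\mathcal{A}$, and use the transposition across $L \dashv I$ together with the fullness of $I$ to identify the resulting weak factorisation systems as $(\mathrm{Cof}_{\mathcal{A}},\, \mathrm{W}_{\mathcal{A}}\cap\mathrm{Fib}_{\mathcal{A}})$ and $(\mathrm{W}_{\mathcal{A}}\cap\mathrm{Cof}_{\mathcal{A}},\, \mathrm{Fib}_{\mathcal{A}})$. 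This is longer but buys two things the paper's citation leaves implicit: it exhibits the restricted model structure as cofibrantly generated, with explicit generators $L\mathcal{I}$ and $L\mathcal{J}$, and it keeps the proposition independent of \cite{MR3937590}. The one point you should tighten is the smallness claim: preservation of filtered colimits by $I$ is not by itself the issue; rather, $I$ carries relative $L\mathcal{I}$-cell complexes to relative $IL\mathcal{I}$-cell complexes, hence to cofibrations of $\mathcal{C}$, and the domains of $\mathcal{I}$ are small relative to the cofibrations of $\mathcal{C}$, so the isomorphism $\mathcal{A}(LA, \operatorname{colim}_\beta X_\beta) \cong \mathcal{C}(A, \operatorname{colim}_\beta IX_\beta)$ gives the required smallness of $LA$. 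This is standard bookkeeping, not a gap.
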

\begin{proof}
If the model structure on $\mathcal{C}$ restricts to one on $\mathcal{A}$, then the adjunctions $L \dashv I$ and $I\dashv R$ are Quillen adjunctions, and hence so is their composite $IL \dashv IR$.

Conversely, suppose that the adjunction $IL \dashv IR$ is a Quillen adjunction. Note that the category $\mathcal{A}$ is complete and cocomplete, since it is a (co)reflective subcategory of the complete and cocomplete category $\mathcal{C}$. By \cite[Theorem 2.3]{MR3937590}, the category $\mathcal{A}$ admits a model structure in which a morphism $f$ is a fibration (resp.\ weak equivalence) if and only if $If$ is a fibration (resp.\ weak equivalence) in the model category $\mathcal{C}$; furthermore, the adjunctions $L \dashv I$ and $I \dashv R$ are Quillen adjunctions with respect to these model structures. 

It remains to show that a morphism $f$ in $\mathcal{A}$ is a cofibration in this model structure on $\mathcal{A}$ if and only if $If$ is a cofibration in $\mathcal{C}$. Necessity follows from the foregoing fact that the functor $I$ is left Quillen, while sufficiency follows from the fact that $L$ is left Quillen and the assumption that $I$ is fully faithful.
\end{proof}

\begin{remark}
One can show (by using \cite[Corollary 2.7]{gkr} and arguing as in the proof above) that the conclusion of Proposition \ref{restprop} also holds under the alternative hypothesis that $\mathcal{C}$ is an \emph{accessible} model category (in the sense of \cite{MR3638359}). 
\end{remark}

\subsection{Two model structures on the factorisation category $(A+B)/\sSet/(A\star B)$} \label{twomodstr}
Let $A$ and $B$ be a pair of simplicial sets. 
We shall use Proposition \ref{restprop} to induce the ``Joyal'' and ``ambivariant'' model structures on $\Cyl(A,B)$ from the following two model structures on the category $(A+B)/\sSet/(A\star B)$ of factorisations\footnote{Beware that an object of the factorisation category $(A+B)/\sSet/(A\star B)$ is not an arbitrary composable pair of morphisms $A + B \lra X \lra A \star B$, but rather a pair whose composite is the canonical inclusion $A + B \lra A \star B$.} in $\sSet$ of the canonical inclusion $A + B \lra A \star B$.

The first of these model structures on the factorisation category $(A+B)/\sSet/(A\star B)$ is the one created by the forgetful functor 
\begin{equation*}
(A+B)/\sSet/(A\star B) \lra \sSet
\end{equation*}
 from the Joyal  model structure for quasi-categories  on $\sSet$.  The existence of this created model structure follows from \cite[\S2]{hirschhornerrata} (which corrects \cite[Theorem 7.6.5(3)]{MR1944041}).

The second model structure on $(A+B)/\sSet/(A\star B)$ is induced from the parametrised Joyal model structure on $\sSet/(A \star B)$ (introduced in Appendix \ref{secparajoyal}, see Theorem \ref{parajoyal}) as follows. By \cite[Theorem 7.6.5(1)]{MR1944041} -- applied to the inclusion $A +B \lra A \star B$ as an object of the category $\sSet/(A \star B)$ equipped with the parametrised Joyal model structure -- there exists a model structure on $(A+B)/\sSet/(A\star B)$ created by the  forgetful functor 
\begin{equation*}
(A+B)/\sSet/(A\star B) \lra \sSet/(A \star B)
\end{equation*}
 from the parametrised Joyal model structure on $\sSet/(A\star B)$.

Since both the Joyal model structure on $\sSet$ and the parametrised Joyal model structure on $\sSet/(A \star B)$ are cofibrantly generated, it follows by \cite{hirschhornunderover} that both of the above model structures on the factorisation category $(A+B)/\sSet/(A\star B)$ are cofibrantly generated.

\subsection{$\Cyl(A,B)$ as a reflective and coreflective subcategory of $(A + B)/\sSet/(A \star B)$} \label{bireflsec}
Recall that the disjoint union $A+B$ and join $A \star B$ are the initial and terminal objects respectively of the category $\Cyl(A,B)$. As observed in \cite[\S14.6]{joyalnotes}, the forgetful functor $\Cyl(A,B) \lra \sSet$ lifts to a fully faithful functor $\Cyl(A,B) \lra (A+B)/\sSet/(A\star B)$. This full embedding has both a left adjoint, given by the composite 
\begin{equation*}
\cd{
(A + B)/\sSet/(A \star B) \ar[r] & \sSet/(A \star B) \ar[r]^-L & \Cyl(A,B)
}
\end{equation*}
of the forgetful functor and  the reflection $L$ described in \S\ref{reflsec} below, and a right adjoint, given by the composite
\begin{equation*}
\cd{
(A + B)/\sSet/(A \star B) \ar[r]& (A + B)/\sSet \ar[r]^-R & \Cyl(A,B)
}
\end{equation*}
of the (other) forgetful functor and the coreflection $R$ described in \S\ref{coreflsec} below.

\subsection{$\Cyl(A,B)$ as a reflective subcategory of $\sSet/(A \star B)$} \label{reflsec}
As described in \cite[Remark 3.5]{stevbifib}, the fully faithful functor $\Cyl(A,B) \lra \sSet/(A \star B)$ has a left adjoint $L$, which sends an object $X \lra A \star B$ of $\sSet/(A\star B)$ 
to the $(A,B)$-cylinder $L(X)$ defined by the pushout below,
\begin{equation*}
\cd{
\partial_0X + \partial_1X \ar[r] \ar[d] & A + B \ar[d] \\
X \ar[r] & L(X) \fatpushoutcorner
}
\end{equation*}
with the induced structure map $L(X) \lra A \star B \lra \Delta[1]$. It follows that a morphism of $(A,B)$-cylinders is a monomorphism in $\Cyl(A,B)$ if and only if  its underlying morphism of simplicial sets is a monomorphism. The essential image of the full embedding $\Cyl(A,B) \lra \sSet/(A \star B)$ consists of those morphisms $X \lra A \star B$ whose pullback along the inclusion $A + B \lra A \star B$ is an isomorphism.

 \subsection{$\Cyl(A,B)$ as a coreflective subcategory of $(A+B)/\sSet$} \label{coreflsec}
The fully faithful functor $\Cyl(A,B) \lra (A+B)/\sSet$ 
has a right adjoint $R$ given by the ``quasi-categorical collage construction'' defined in \cite[\S F.5]{rvbook}. (We will not need to know anything about this right adjoint beyond its existence.)  
The essential image of the full embedding $\Cyl(A,B) \lra (A+B)/\sSet$ consists of those cospans of simplicial sets 
\begin{equation*}
\cd{
A \ar[r] & C & \ar[l] B
}
\end{equation*}
whose left leg $A \lra C$ is a sieve inclusion and whose right leg $B \lra C$ is the complementary cosieve inclusion (in the sense of \cite[\S7.2]{joyalbarcelona} and \cite[\S14.5]{joyalnotes}).

\subsection{Restricted model structures on $\Cyl(A,B)$} The following proposition 
verifies the necessary and  sufficient condition of Proposition \ref{restprop} in our two cases of interest.

\begin{proposition} \label{reflprop}
The reflection $L \colon \sSet/(A \star B) \lra \Cyl(A,B)$ preserves monomorphisms, inner anodyne extensions, and weak categorical equivalences, and inverts any morphism between objects of $\sSet/(A\star B)$ whose structure maps factor through the inclusion $A + B \lra A \star B$.
\end{proposition}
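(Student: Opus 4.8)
The plan is to analyse the reflection $L$ via its defining pushout
\[
\cd{
\partial_0X + \partial_1X \ar[r] \ar[d] & A + B \ar[d] \\
X \ar[r] & L(X) \fatpushoutcorner
}
\]
and exploit the fact that this pushout is being glued along a monomorphism (the inclusion $\partial_0X + \partial_1X \hookrightarrow X$ of the fibres over $0$ and $1$ into $X$, which is monic since the two fibres are disjoint subobjects of $X$). First I would record the easy closure properties: since the left vertical leg in the square is always a monomorphism, $L$ of a map $f\colon X \to Y$ in $\sSet/(A\star B)$ is computed as a pushout-corner map, and I would verify directly that if $f$ is a monomorphism then so is $L(f)$ — this follows from the fact that monomorphisms in $\sSet$ are closed under pushout and from a diagram chase on the cube obtained by mapping the pushout square for $X$ to the one for $Y$. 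The statement about inverting morphisms whose structure maps factor through $A + B \hookrightarrow A\star B$ is essentially immediate: for such an object $X \to A\star B$, the map $\partial_0X + \partial_1X \to X$ is already an isomorphism (the whole of $X$ lies over $\{0,1\}$), so the pushout forces $L(X) \xrightarrow{\ \sim\ } A + B$ canonically and functorially, whence $L$ sends any morphism between two such objects to (a morphism isomorphic to) the identity of $A+B$.

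The substantive content is that $L$ preserves inner anodyne extensions and weak categorical equivalences. For inner anodyne extensions I would argue as follows. It suffices to check this on a generating set, namely the inner horn inclusions $\Lambda^k[n] \hookrightarrow \Delta[n]$ ($0 < k < n$) regarded, via an arbitrary map $\Delta[n] \to A\star B$, as maps in $\sSet/(A\star B)$; indeed $L$ is a left adjoint, hence preserves colimits, so it preserves the saturated class generated by the images of the generators provided those images are inner anodyne. Now apply $L$ to such a generator: one obtains the pushout-corner map of the square relating the pushout for $\Lambda^k[n]$ to that for $\Delta[n]$. The key observation is that the map $\partial_0\Delta[n] + \partial_1\Delta[n] \hookrightarrow \Delta[n]$ is unchanged in passing to $\Lambda^k[n]$ precisely when $\Lambda^k[n]$ already contains both relevant vertices — and since $0 < k < n$, both vertex $0$ and vertex $n$ lie in $\Lambda^k[n]$, so the "boundary data" $\partial_0 + \partial_1$ over $\{0\}$ and $\{1\}$ pulled back to $\Lambda^k[n]$ and to $\Delta[n]$ agree (both are just the appropriate fibres, and the horn contains those vertices and hence those fibres). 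Consequently $L$ applied to the generator is again a pushout of the inner horn inclusion $\Lambda^k[n] \hookrightarrow \Delta[n]$ along the map into $A+B$, hence inner anodyne. I expect this vertex-containment bookkeeping — making precise that the fibres over $0$ and $1$ inside $\Lambda^k[n]$ coincide with those inside $\Delta[n]$ — to be the one genuinely delicate point, though it is ultimately elementary.

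Finally, preservation of weak categorical equivalences. The cleanest route is to reduce it to the two facts already in hand: $L$ preserves monomorphisms and inner anodyne extensions. I would factor an arbitrary weak categorical equivalence $f\colon X \to Y$ in $\sSet/(A\star B)$ as an inner anodyne extension followed by an inner fibration (using the parametrised Joyal model structure of Appendix \ref{secparajoyal}, or simply the small object argument for inner horns in $\sSet/(A\star B)$); since $L$ preserves inner anodyne extensions and, being a left adjoint, preserves the relevant pushouts, and since a monic weak categorical equivalence can be built up similarly, the problem reduces to showing $L$ preserves weak categorical equivalences that are also inner fibrations — equivalently, by the structure of the parametrised Joyal model structure, that $L$ is a left Quillen functor from $\sSet/(A\star B)$ (parametrised Joyal) to $\Cyl(A,B)$ (Joyal). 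For that it is enough to check $L$ preserves cofibrations (done) and trivial cofibrations; a trivial cofibration in the parametrised Joyal model structure is a monic weak categorical equivalence, which is built from inner anodyne extensions and pushouts thereof together with the cancellation properties of the model structure, so preservation follows from the inner-anodyne case together with the fact that $L$ inverts the maps described in the last clause (which handles the degenerate overlaps where $X$ already lies over $\{0,1\}$). The main obstacle throughout is the interplay between the pushout defining $L$ and the vertices $0, n$ of the simplices carrying the inner horns; once that is pinned down, everything else is formal manipulation of saturated classes and Quillen adjunctions.
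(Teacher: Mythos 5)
Your treatment of monomorphisms, of the inverting clause, and of inner anodyne extensions is essentially sound. For inner anodynes you take a different route from the paper: the paper applies the fibre functors $\partial_0,\partial_1$ (which preserve inner anodynes by \cite[Lemma 3.21]{joyalbarcelona}) and then a gluing lemma for inner anodynes due to Stevenson \cite[Lemma 2.5]{MR3812459}, whereas you reduce to the generating inner horn inclusions over $A\star B$ and check that $L$ sends each one either to an isomorphism (when the structure map factors through $A+B$) or to a pushout of the same horn inclusion (when it does not, in which case the fibres of $\Lambda^k[n]$ and of $\Delta[n]$ over $0$ and $1$ coincide). This is a valid and rather more self-contained argument, though your justification of the fibre comparison is off: containing the vertices $0$ and $n$ does not by itself force the horn to contain the fibres; the correct reason is that each fibre is a face $\Delta[\{0,\dots,m\}]$ or $\Delta[\{m+1,\dots,n\}]$ with $0\leq m<n$, which omits a vertex ($n$ or $0$ respectively) distinct from $k$ and hence lies in $\Lambda^k[n]$.

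The clause about weak categorical equivalences, however, has a genuine gap. Your factorisation $f=pi$ with $i$ inner anodyne reduces the problem to showing that $L$ sends an inner fibration which is a weak categorical equivalence to a weak categorical equivalence, and you propose to do this by showing $L$ is left Quillen from the parametrised Joyal model structure. But the weak equivalences of that model structure form a \emph{strictly smaller} class than the weak categorical equivalences (Observation \ref{bjday}): an inner fibration which is a weak categorical equivalence need not be a trivial fibration, and a monic weak categorical equivalence need not be inner anodyne --- this is precisely the counterexample of \cite{contrasample} recalled in the introduction, and it is the reason the paper's main theorem is nontrivial. Consequently, left Quillen-ness for the parametrised Joyal structure tells you nothing about general weak categorical equivalences, and your assertion that a monic weak categorical equivalence ``is built from inner anodyne extensions and pushouts thereof'' is false. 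The paper instead uses that $\partial_0$ and $\partial_1$ preserve weak categorical equivalences (\cite[Corollary 7.11]{joyalbarcelona}, reproved as Proposition \ref{joyalcor}) together with the gluing lemma: $Lf$ is induced on pushouts by a levelwise weak categorical equivalence of spans whose left-pointing legs are monomorphisms. Some such input relating weak categorical equivalences to the defining pushout is unavoidable, and your proposal does not supply it.
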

\begin{proof}
By definition (see \S\ref{reflsec}), the functor $L$ sends a morphism $f \colon X \lra Y$ in $\sSet/(A \star B)$ to the morphism $L(X) \lra L(Y)$ induced by pushout from the diagram below, in which the left-pointing maps are monomorphisms and the left-hand square is a pullback.
\begin{equation*}
\cd{
X \ar[d]_-f & \fatleftpullbackcorner \partial_0X + \partial_1X \ar[l] \ar[r] \ar[d]^-{\partial_0f + \partial_1f} & A + B \ar@{=}[d] \\
Y & \partial_0Y + \partial_1Y \ar[l] \ar[r] & A + B
}
\end{equation*}
It follows from the exactness of pushouts of monomorphisms in the presheaf category $\sSet$ that $L$ preserves monomorphisms. If $f$ is inner anodyne, then \cite[Lemma 3.21]{joyalbarcelona} implies that the morphism $\partial_0f + \partial_1f$ is inner anodyne, whence  \cite[Lemma 2.5]{MR3812459}\footnote{This lemma is a corollary of Stevenson's theorem \cite[Theorem 1.5]{MR3812459}, for which we have given a new proof in Appendix \ref{secparajoyal} (see Theorem \ref{stevthm}).} implies that $Lf$ is inner anodyne. It follows similarly from \cite[Corollary 7.11]{joyalbarcelona} (see also Proposition \ref{joyalcor}) and the gluing lemma (see \cite{reedy}) that $L$ preserves weak categorical equivalences.

Any object $(X,p)$ of $\sSet/(A \star B)$ whose structure map $p \colon X \lra A \star B$ factors through the inclusion $A + B \lra A \star B$ is sent by $L$ to the initial $(A,B)$-cylinder $A + B$. Hence $L$ inverts any morphism between two such objects.
\end{proof}

We are now ready to construct the two model structures on $\Cyl(A,B)$ described in \S\ref{secintro}, and thus complete step (1) of the proof of Joyal's conjecture outlined in \S\ref{proofoutline}. The existence of the first model structure was stated by Joyal \cite[\S14.6]{joyalnotes}; an alternative proof of its existence is given in \cite[Theorem 3.9]{stevbifib}.

\begin{theorem}[the Joyal model structure on $\Cyl(A,B)$] \label{joyalcylmod}
There exists a model structure on $\Cyl(A,B)$ in which a morphism is a cofibration, weak equivalence, or fibration if and only if its underlying morphism of simplicial sets is a cofibration, weak equivalence, or fibration respectively in the Joyal model structure for quasi-categories on $\sSet$.
\end{theorem}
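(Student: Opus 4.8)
The plan is to apply Proposition \ref{restprop} to the cofibrantly generated model category $\mathcal{C} = (A+B)/\sSet/(A\star B)$ equipped with the first of the two model structures introduced in \S\ref{twomodstr} (the one created by the forgetful functor to the Joyal model structure on $\sSet$), and to the full subcategory $\mathcal{A} = \Cyl(A,B)$. First I would recall from \S\ref{bireflsec} that $\Cyl(A,B)$ is a reflective and coreflective subcategory of $(A+B)/\sSet/(A\star B)$ via an adjoint triple $L \dashv I \dashv R$, so that the hypotheses of Proposition \ref{restprop} are in place; it then remains only to verify that the composite adjunction $IL \dashv IR$ is a Quillen adjunction for the created Joyal model structure on $(A+B)/\sSet/(A\star B)$.

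Since the created model structure on $(A+B)/\sSet/(A\star B)$ is cofibrantly generated, it suffices to check that $IL$ preserves cofibrations and trivial cofibrations (equivalently, that $IL$ sends a set of generating cofibrations, resp.\ generating trivial cofibrations, to cofibrations, resp.\ trivial cofibrations). Because both the model structure on $(A+B)/\sSet/(A\star B)$ and the Joyal model structure on $\Cyl(A,B)$-to-be are detected on underlying simplicial sets, and because the endofunctor $IL$ on $(A+B)/\sSet/(A\star B)$ is, on underlying simplicial sets, computed by the pushout formula for $L$ described in \S\ref{reflsec}, this reduces to the statement that $L \colon \sSet/(A\star B) \lra \Cyl(A,B)$ preserves monomorphisms and weak categorical equivalences between monomorphisms — more precisely, that $L$ sends monomorphisms to monomorphisms and sends monic weak categorical equivalences to weak categorical equivalences. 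But this is exactly (part of) the content of Proposition \ref{reflprop}: $L$ preserves monomorphisms and weak categorical equivalences. Feeding this through the composite $I L$ (with $I$ fully faithful, so that a morphism of cylinders is a cofibration, resp.\ weak equivalence, on underlying simplicial sets iff its image under $I$ is) gives that $IL$ is left Quillen, hence $IL \dashv IR$ is a Quillen adjunction.

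Having verified the criterion of Proposition \ref{restprop}, I conclude that the Joyal model structure on $(A+B)/\sSet/(A\star B)$ restricts to a model structure on $\Cyl(A,B)$: that is, the full inclusion $I$ creates a model structure on $\Cyl(A,B)$ in which $f$ is a cofibration, weak equivalence, or fibration iff $If$ is one in $(A+B)/\sSet/(A\star B)$. Composing with the forgetful functor $(A+B)/\sSet/(A\star B) \lra \sSet$ — which itself creates the chosen model structure on $(A+B)/\sSet/(A\star B)$ from the Joyal model structure on $\sSet$ — identifies these three classes with the morphisms of $\Cyl(A,B)$ whose underlying map of simplicial sets is a cofibration, weak equivalence, or fibration in the Joyal model structure for quasi-categories, which is the asserted description.

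The only genuinely non-formal input is Proposition \ref{reflprop}, which we have already established; everything else is bookkeeping with created and restricted model structures. I therefore expect no real obstacle here — the main point requiring care is simply to keep straight the two forgetful functors and the two model structures on $(A+B)/\sSet/(A\star B)$, and to note that the restriction criterion of Proposition \ref{restprop}, applied with the \emph{Joyal} model structure on $\mathcal{C}$, uses only the weak-equivalence-and-monomorphism preservation part of Proposition \ref{reflprop} (the inner-anodyne part will be needed later, for the ambivariant model structure, but not here).
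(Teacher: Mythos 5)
Your proposal is correct and follows essentially the same route as the paper: an application of Proposition \ref{restprop} to the first (Joyal-created) model structure on $(A+B)/\sSet/(A\star B)$ and the adjoint triple of \S\ref{bireflsec}, with the Quillen condition supplied by the monomorphism- and weak-categorical-equivalence-preservation clauses of Proposition \ref{reflprop}. The only difference is that you spell out the bookkeeping (which the paper leaves implicit), and you need not restrict to \emph{monic} weak categorical equivalences since $L$ preserves all of them.
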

\begin{proof}
This is proven by an application of Proposition \ref{restprop} to the first model structure on the factorisation category $(A+B)/\sSet/(A \star B)$ described in \S\ref{twomodstr} and the adjoint triple described in  \S\ref{bireflsec}. The necessary and sufficient condition of Proposition \ref{restprop} follows from the fact, proved in Proposition \ref{reflprop}, that the reflection $L \colon \sSet/(A \star B) \lra \Cyl(A,B)$ preserves monomorphisms and weak categorical equivalences.
\end{proof}

We call the model structure of Theorem \ref{joyalcylmod} the \emph{Joyal model structure} on $\Cyl(A,B)$. Note that (by the remark on monomorphisms made in \S\ref{reflsec}) a morphism in $\Cyl(A,B)$ is a cofibration in this model structure if and only if it is a monomorphism in $\Cyl(A,B)$. Since, for any cylinder $X \in \Cyl(A,B)$, the canonical morphism $A +B \lra X$ is a monomorphism,  every object of $\Cyl(A,B)$ is cofibrant in this model structure; the same is true of the following model structure.

\begin{theorem}[the ambivariant model structure on $\Cyl(A,B)$] \label{ambicylmod}
There exists a model structure on $\Cyl(A,B)$ whose cofibrations are the monomorphisms and whose fibrant objects are those cylinders $X \in \Cyl(A,B)$ for which the canonical morphism $X \lra A \star B$ is an inner fibration. A morphism between fibrant objects in $\Cyl(A,B)$ is a fibration if and only if it is an inner fibration. 
\end{theorem}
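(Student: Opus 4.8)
The plan is to deduce this theorem from Proposition~\ref{restprop}, exactly as Theorem~\ref{joyalcylmod} was deduced from it, but this time applied to the \emph{second} model structure on the factorisation category $(A+B)/\sSet/(A\star B)$ described in \S\ref{twomodstr} --- the one created by the forgetful functor to $\sSet/(A\star B)$ equipped with the parametrised Joyal model structure of Theorem~\ref{parajoyal} --- together with the adjoint triple $L \dashv I \dashv R$ of \S\ref{bireflsec}. The existence of the resulting restricted model structure on $\Cyl(A,B)$ will follow once we check the necessary and sufficient condition of Proposition~\ref{restprop}, namely that the composite adjunction $IL \dashv IR$ on $(A+B)/\sSet/(A\star B)$ is a Quillen adjunction; and this reduces (as in the proof of Theorem~\ref{joyalcylmod}) to the statement, proved in Proposition~\ref{reflprop}, that the reflection $L \colon \sSet/(A\star B) \lra \Cyl(A,B)$ preserves monomorphisms and --- now crucially with respect to the parametrised Joyal model structure on $\sSet/(A\star B)$, whose weak equivalences between fibrant objects are detected fibrewise and whose cofibrations are the monomorphisms --- sends the relevant weak equivalences to weak equivalences. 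In fact the cofibration part is immediate from Proposition~\ref{reflprop}, and for the weak equivalences the key input is again that $L$ preserves inner anodyne extensions and weak categorical equivalences, together with the final clause of Proposition~\ref{reflprop} that $L$ inverts any morphism whose structure maps factor through $A+B \lra A\star B$.

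Granting the existence of the restricted model structure, it remains to identify its cofibrations, fibrant objects, and fibrations between fibrant objects. The cofibrations are the monomorphisms: a morphism of $\Cyl(A,B)$ is a cofibration in the restricted structure iff its image under $I$ is a cofibration in $(A+B)/\sSet/(A\star B)$ iff (tracing through the creation of that model structure from the parametrised Joyal structure on $\sSet/(A\star B)$, whose cofibrations are the monomorphisms) its underlying map of simplicial sets is a monomorphism, which by the remark in \S\ref{reflsec} is equivalent to being a monomorphism in $\Cyl(A,B)$. For the fibrant objects: an object $X \in \Cyl(A,B)$ is fibrant iff $IX$ is fibrant in $(A+B)/\sSet/(A\star B)$ iff (again by the creation property) the structure map $X \lra A\star B$, viewed as an object of $\sSet/(A\star B)$, is fibrant in the parametrised Joyal model structure --- and by the defining property of that model structure (Theorem~\ref{parajoyal}), its fibrant objects are exactly the inner fibrations with codomain $A\star B$. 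So $X$ is fibrant iff $X \lra A\star B$ is an inner fibration, as claimed. The characterisation of fibrations between fibrant objects follows the same pattern: $f$ is a fibration in $\Cyl(A,B)$ iff $If$ is a fibration in $(A+B)/\sSet/(A\star B)$ iff the underlying morphism of $f$ over $A\star B$ is a fibration in the parametrised Joyal model structure on $\sSet/(A\star B)$; and we then invoke the corresponding statement for the parametrised Joyal model structure (which I expect Theorem~\ref{parajoyal} or its surrounding results to supply), that a morphism between fibrant objects there is a fibration iff it is an inner fibration, together with the fact that being an inner fibration in $\sSet/(A\star B)$ is detected on underlying simplicial sets and is inherited by the map in $\Cyl(A,B)$ (since $\Cyl(A,B) \lra \sSet/(A\star B)$ is fully faithful and the inner fibration condition only involves lifting against inner horn inclusions, which are maps \emph{over} $A\star B$ in the relevant sense).

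The main obstacle, and the place where the real content sits, is verifying the hypothesis of Proposition~\ref{restprop} for the parametrised Joyal model structure --- equivalently, that $L$ is suitably compatible with it. Unlike the Joyal case (Theorem~\ref{joyalcylmod}), where Proposition~\ref{reflprop} applies verbatim, here one must be careful that ``weak categorical equivalence'' and ``weak equivalence in the parametrised Joyal model structure on $\sSet/(A\star B)$'' are not the same notion: the latter is a finer relation. What saves the argument is that to check $IL \dashv IR$ is Quillen it suffices to check that $IL$ preserves cofibrations and trivial cofibrations, and the trivial cofibrations of the parametrised Joyal model structure are generated by (pushouts of) inner anodyne extensions together with a set of maps that $L$ handles via the last clause of Proposition~\ref{reflprop}; so Proposition~\ref{reflprop} as stated already packages precisely the facts needed. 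Thus the proof is genuinely a bookkeeping exercise of threading the right model structure through the machinery already assembled, and the substantive work has been front-loaded into Appendix~\ref{secparajoyal} and Proposition~\ref{reflprop}.

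\begin{proof}
This is proven by an application of Proposition~\ref{restprop} to the second model structure on the factorisation category $(A+B)/\sSet/(A\star B)$ described in \S\ref{twomodstr} --- namely, the one created by the forgetful functor to $\sSet/(A\star B)$ equipped with the parametrised Joyal model structure (Theorem~\ref{parajoyal}) --- and the adjoint triple described in \S\ref{bireflsec}. The necessary and sufficient condition of Proposition~\ref{restprop} follows from Proposition~\ref{reflprop}: the reflection $L \colon \sSet/(A\star B) \lra \Cyl(A,B)$ preserves monomorphisms, and it sends a generating set of trivial cofibrations of the parametrised Joyal model structure (consisting of pushouts of inner anodyne extensions, together with maps whose structure maps factor through $A+B \lra A\star B$) to trivial cofibrations, using respectively that $L$ preserves inner anodyne extensions and monomorphisms and that $L$ inverts the latter maps. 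Hence $IL \dashv IR$ is a Quillen adjunction and the model structure on $\Cyl(A,B)$ exists.

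It remains to identify its cofibrations, fibrant objects, and fibrations between fibrant objects, which we do by tracing through the creation of the second model structure on $(A+B)/\sSet/(A\star B)$ from the parametrised Joyal model structure on $\sSet/(A\star B)$, and using that $\Cyl(A,B) \lra \sSet/(A\star B)$ is fully faithful. A morphism of $\Cyl(A,B)$ is a cofibration iff its image in $\sSet/(A\star B)$ is a monomorphism iff its underlying morphism of simplicial sets is a monomorphism iff (by the remark in \S\ref{reflsec}) it is a monomorphism in $\Cyl(A,B)$. An object $X \in \Cyl(A,B)$ is fibrant iff the structure map $X \lra A\star B$ is fibrant in the parametrised Joyal model structure on $\sSet/(A\star B)$, which by Theorem~\ref{parajoyal} holds iff $X \lra A\star B$ is an inner fibration. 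Finally, a morphism $f$ between fibrant objects of $\Cyl(A,B)$ is a fibration iff its image in $\sSet/(A\star B)$ is a fibration between fibrant objects of the parametrised Joyal model structure, which by Theorem~\ref{parajoyal} holds iff $f$ is an inner fibration; here we use that the inner fibration condition concerns only lifting against inner horn inclusions and is therefore detected on underlying simplicial sets and transported without change between $\sSet/(A\star B)$ and $\Cyl(A,B)$.
\end{proof}
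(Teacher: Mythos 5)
Your overall strategy is exactly the paper's: apply Proposition~\ref{restprop} to the second model structure on $(A+B)/\sSet/(A\star B)$ and the adjoint triple of \S\ref{bireflsec}, and verify the Quillen condition using the properties of $L$ established in Proposition~\ref{reflprop}. However, there are two gaps in the execution.

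The first is the claim that the trivial cofibrations of the parametrised Joyal model structure on $\sSet/(A\star B)$ are generated by the inner anodyne extensions together with the maps whose structure maps factor through $A+B \lra A\star B$. This is false. In Cisinski's construction the class $\mathsf{An}$ of anodyne extensions generated by the end-point inclusions and inner horn inclusions is contained in, but does not exhaust, the trivial cofibrations: the fibrant objects and the fibrations \emph{between fibrant objects} are detected by lifting against $\mathsf{An}$, but arbitrary fibrations are not, so $\mathsf{An}$ need not generate the trivial cofibrations. (In the case $B=\Delta[0]$ this would assert that the trivial cofibrations of the Joyal model structure are generated by the inner anodynes and $\{0\}\lra J$, which no one knows how to prove and which is closely related to the open question addressed by the counterexample in \cite{contrasample} that motivates this very paper.) The correct tool is the recognition principle of Proposition~\ref{recogprinc}: a cocontinuous functor out of $\sSet/(A\star B)$ sending monomorphisms to cofibrations sends \emph{all} weak equivalences of the parametrised Joyal model structure to weak equivalences as soon as it sends the projections $\mathrm{pr}\colon (J,x\circ\mathrm{pr})\lra(\Delta[0],x)$ and the inner horn inclusions to weak equivalences. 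That principle (proved via \cite[Propositions 7.14 and 7.15]{MR2342834}, using that all objects are cofibrant and that weak equivalences are detected by mapping into fibrant objects) is what licenses checking only on those generators; it is not a statement about generation of trivial cofibrations. With Proposition~\ref{recogprinc} in hand, the facts you cite from Proposition~\ref{reflprop} (preservation of monomorphisms and inner anodynes, and inversion of the $J$-related maps, whose structure maps do factor through $A+B$ since they land in $0$-simplices) complete the verification, exactly as in the paper.

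The second gap is in the identification of fibrations between fibrant objects: Theorem~\ref{parajoyal} says that a morphism between fibrant objects of the parametrised Joyal model structure is a fibration if and only if it is a \emph{fibrewise isofibration} over $A\star B$ (Definition~\ref{fibisodef}), not if and only if it is an inner fibration. To conclude that these coincide for morphisms of cylinders you need the additional observation that, for any $X\in\Cyl(A,B)$, the fibre of $X\lra A\star B$ over every $0$-simplex is the terminal simplicial set, so the isofibration condition on fibres is vacuous. This is a small but necessary step that your proof omits.
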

\begin{proof}
The first statement is proven by an application of Proposition \ref{restprop} to the second model structure on $(A + B)/\sSet/(A \star B)$ described in \S\ref{twomodstr} and the adjoint triple described in \S\ref{bireflsec}, which implies that there exists a model structure on $\Cyl(A,B)$ created by the forgetful functor $\Cyl(A,B) \lra \sSet/(A \star B)$ from the parametrised Joyal model structure on $\sSet/(A \star B)$. The necessary and sufficient condition of Proposition \ref{restprop} follows from Proposition \ref{recogprinc} and Proposition \ref{reflprop}, the latter of which implies that the reflection $L$ preserves monomorphisms and inner anodyne extensions, and inverts the projection $\mathrm{pr} \colon (J,x \circ \mathrm{pr}) \lra (\Delta[0],x)$ in $\sSet/(A \star B)$ for every $0$-simplex $x$ of $A \star B$.

By the construction of this model structure and by Theorem \ref{parajoyal}, a morphism between fibrant objects in $\Cyl(A,B)$ is a fibration if and only if it is a fibrewise isofibration over $A \star B$ (see Definition \ref{fibisodef}). But, for any cylinder $X \in \Cyl(A,B)$, the fibre of the canonical morphism $X \lra A \star B$ over any $0$-simplex of $A \star B$ is the terminal simplicial set. Hence a morphism between fibrant objects in $\Cyl(A,B)$ is a fibrewise isofibration if and only if it is an inner fibration. This proves the second statement.
\end{proof}

 We call the model structure of Theorem \ref{ambicylmod} the \emph{ambivariant model structure} on $\Cyl(A,B)$. We say that an object of $\Cyl(A,B)$ is \emph{ambifibrant} if it is a fibrant object in this model structure, and that a morphism in $\Cyl(A,B)$ is an \emph{ambivariant equivalence} if it is a weak equivalence in this model structure. 

The following proposition gives a recognition principle for left Quillen functors from the ambivariant model structure on $\Cyl(A,B)$.

\begin{proposition} \label{recogprop2}
Let $\mathcal{C}$ be a model category, and let $F \colon \Cyl(A,B) \lra \mathcal{C}$ be a cocontinuous functor that sends monomorphisms to cofibrations. Then $F$ sends every ambivariant equivalence in $\Cyl(A,B)$ to a weak equivalence in $\mathcal{C}$ if and only if it sends every inner anodyne extension in $\Cyl(A,B)$ to a weak equivalence in $\mathcal{C}$.
\end{proposition}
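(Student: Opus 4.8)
The plan is to exploit the fact that the ambivariant model structure on $\Cyl(A,B)$ is cofibrantly generated, together with the standard ``recognition principle'' for left Quillen functors out of a cofibrantly generated model category: a cocontinuous functor $F$ out of a cofibrantly generated model category is left Quillen (equivalently, sends trivial cofibrations to weak equivalences, given that it already sends cofibrations to cofibrations) if and only if it sends a generating set of trivial cofibrations to weak equivalences. So the content of the proposition reduces to identifying a convenient set of generating trivial cofibrations for the ambivariant model structure and showing that ``sends these to weak equivalences'' is equivalent to ``sends inner anodyne extensions in $\Cyl(A,B)$ to weak equivalences''. One direction is immediate: every inner anodyne extension between $(A,B)$-cylinders is in particular a monomorphism whose underlying map is inner anodyne, hence a trivial cofibration in the Joyal model structure on $\Cyl(A,B)$ (Theorem \ref{joyalcylmod}); but the Joyal model structure has the same cofibrations as the ambivariant model structure and, since every fibration in the Joyal model structure is an inner fibration (and hence an ambivariant fibration between fibrant objects), every ambivariant fibration has the right lifting property against Joyal trivial cofibrations — so inner anodyne extensions in $\Cyl(A,B)$ are ambivariant trivial cofibrations. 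Hence if $F$ sends ambivariant equivalences to weak equivalences, it sends inner anodyne extensions in $\Cyl(A,B)$ to weak equivalences (they are in particular ambivariant equivalences).

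For the converse — the substantive direction — I would recall how the ambivariant model structure on $\Cyl(A,B)$ was built: it is created by the forgetful functor $\Cyl(A,B) \lra \sSet/(A\star B)$ from the parametrised Joyal model structure, which in turn (by Theorem \ref{parajoyal} in the appendix) is cofibrantly generated, with generating trivial cofibrations that one can take to be pushouts/composites of inner horn inclusions over $A\star B$ together with the ``invert an endpoint'' maps $(\Lambda^1[3]\text{-type data})$ — more precisely, maps of the form $\mathrm{pr}\colon (J,x\circ\mathrm{pr})\lra(\Delta[0],x)$ for $0$-simplices $x$ of $A\star B$, as already invoked in the proof of Theorem \ref{ambicylmod}. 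Applying the left adjoint $L\colon \sSet/(A\star B)\lra \Cyl(A,B)$ to such a generating set yields a generating set of trivial cofibrations for the ambivariant model structure on $\Cyl(A,B)$. Now Proposition \ref{reflprop} tells us precisely what $L$ does to these generators: it sends inner anodyne extensions in $\sSet/(A\star B)$ to inner anodyne extensions in $\Cyl(A,B)$, and it sends each projection $(J,x\circ\mathrm{pr})\lra(\Delta[0],x)$ to an \emph{isomorphism} in $\Cyl(A,B)$, since the structure map of $(J,x\circ\mathrm{pr})$ factors through the inclusion $A+B\lra A\star B$ (every $0$-simplex of $A\star B$ lies in $A+B$). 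So the image under $L$ of a generating set of ambivariant trivial cofibrations consists of (i) inner anodyne extensions in $\Cyl(A,B)$ and (ii) isomorphisms. A cocontinuous $F$ sending monomorphisms to cofibrations that sends inner anodyne extensions in $\Cyl(A,B)$ to weak equivalences therefore sends all of these generators to weak equivalences (isomorphisms are sent to isomorphisms, hence to weak equivalences); by the recognition principle $F$ is left Quillen with respect to the ambivariant model structure, and since every object of $\Cyl(A,B)$ is cofibrant (as remarked after Theorem \ref{joyalcylmod}), Ken Brown's lemma gives that $F$ sends \emph{every} ambivariant equivalence to a weak equivalence.

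The main obstacle — or rather, the point requiring the most care — is the bookkeeping around generating trivial cofibrations: one must be sure that the set $L(\mathcal{J})$, for $\mathcal{J}$ a generating set of trivial cofibrations of the parametrised Joyal structure on $\sSet/(A\star B)$, really is a generating set of trivial cofibrations for the created model structure on $\Cyl(A,B)$, and that the recognition principle applies in the form ``cocontinuous + sends cofibrations to cofibrations + sends generating trivial cofibrations to weak equivalences $\Rightarrow$ left Quillen''. The former is a general feature of model structures created by a left adjoint along the lines of \cite[Theorem 2.3]{MR3937590} (the left adjoint of the creating adjunction carries generating (trivial) cofibrations to generating (trivial) cofibrations); the latter is standard but should be cited carefully. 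Everything else is formal: the identification of $L$ on generators is exactly Proposition \ref{reflprop}, and the passage from ``left Quillen'' to ``sends all weak equivalences to weak equivalences'' is Ken Brown's lemma using that every cylinder is cofibrant.
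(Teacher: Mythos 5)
Your proposal is correct and follows essentially the same route as the paper, whose proof of this proposition is the one-line invocation of Proposition \ref{reflprop}, Theorem \ref{ambicylmod}, and the Joyal--Tierney recognition principle for left Quillen functors out of cofibrantly generated model categories \cite[Lemma 7.14, Proposition 7.15]{MR2342834} --- that is, exactly the ``standard argument'' you spell out: transport a generating set of trivial cofibrations of the parametrised Joyal model structure on $\sSet/(A\star B)$ along the reflection $L$, observe via Proposition \ref{reflprop} that the images are inner anodyne extensions and isomorphisms (the latter because every $0$-simplex of $A \star B$ lies in $A+B$), and conclude with Ken Brown's lemma using that every object of $\Cyl(A,B)$ is cofibrant.

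One caveat: your justification of the easy direction does not parse. From ``every Joyal fibration in $\Cyl(A,B)$ is an ambivariant fibration'' you cannot conclude that ``every ambivariant fibration has the right lifting property against Joyal trivial cofibrations''; that would require the reverse containment of fibrations, which is not available at this stage (it is essentially the content of Theorem \ref{mainthm}). The correct one-line argument is: an inner anodyne extension in $\Cyl(A,B)$ has underlying morphism in $\sSet/(A \star B)$ belonging to the saturated class $\mathsf{An}$ from the proof of Theorem \ref{parajoyal}, hence is a trivial cofibration in the parametrised Joyal model structure; since the ambivariant model structure is created by the forgetful functor $\Cyl(A,B) \lra \sSet/(A\star B)$ from that model structure, it is therefore an ambivariant equivalence. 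With that repair the proof is complete.
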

\begin{proof}
This follows by a standard argument from Proposition \ref{reflprop}, Theorem \ref{ambicylmod},  \cite[Lemma 7.14]{MR2342834},  and \cite[Proposition 7.15]{MR2342834}. 
\end{proof}

We conclude this section with an observation about duality, which will help to simplify the exposition of the following two sections.

\begin{observation}[duality and the ambivariant model structure] \label{dualobs}
Recall that the category of simplicial sets bears an involution $(-)^\mathrm{op} \colon \sSet \lra \sSet$ (induced from the non-trivial involution on the category $\Delta$), which sends a simplicial set $X$ to its \emph{opposite} $X^\mathrm{op}$. For each pair of simplicial sets $A$ and $B$, this involution defines an isomorphism
\begin{equation*}
\Cyl(A,B) \cong \Cyl(B^\mathrm{op},A^\mathrm{op})
\end{equation*}
between the category of $(A,B)$-cylinders and the category of $(B^\mathrm{op},A^\mathrm{op})$-cylinders. 
Since the involution $(-)^\mathrm{op} \colon \sSet \lra \sSet$ preserves inner fibrations, it follows that this isomorphism respects the ambivariant model structures on these two categories. 
\end{observation}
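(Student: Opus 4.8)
The plan is to realise the isomorphism $\Cyl(A,B) \cong \Cyl(B^{\mathrm{op}},A^{\mathrm{op}})$ as a transport of structure through the entire construction of the ambivariant model structure carried out in the proof of Theorem \ref{ambicylmod}. The first step is to record the elementary compatibilities of the involution $(-)^{\mathrm{op}}$ of $\sSet$ with the constructions in play: there is a canonical isomorphism $(A \star B)^{\mathrm{op}} \cong B^{\mathrm{op}} \star A^{\mathrm{op}}$ under which the canonical inclusion $A + B \lra A \star B$ corresponds to the canonical inclusion $B^{\mathrm{op}} + A^{\mathrm{op}} \lra B^{\mathrm{op}} \star A^{\mathrm{op}}$, while $(\Delta[1])^{\mathrm{op}} \cong \Delta[1]$ via the vertex-transposition (which is precisely why the opposite of an $(A,B)$-cylinder is a $(B^{\mathrm{op}},A^{\mathrm{op}})$-cylinder). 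Applying $(-)^{\mathrm{op}}$ slicewise therefore yields an isomorphism of categories $\sSet/(A \star B) \cong \sSet/(B^{\mathrm{op}} \star A^{\mathrm{op}})$ and, compatibly with it, an isomorphism between the factorisation categories $(A+B)/\sSet/(A \star B) \cong (B^{\mathrm{op}}+A^{\mathrm{op}})/\sSet/(B^{\mathrm{op}} \star A^{\mathrm{op}})$ which restricts, along the full embeddings of \S\ref{bireflsec}, to the isomorphism $\Cyl(A,B) \cong \Cyl(B^{\mathrm{op}},A^{\mathrm{op}})$ of the Observation.

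Next I would verify, one level at a time, that these isomorphisms respect the model structures entering the construction. The involution of $\sSet$ preserves the Joyal model structure for quasi-categories, since monomorphisms, inner fibrations, inner anodyne extensions and weak categorical equivalences are each self-dual, and it fixes the interval $J$ up to isomorphism, as $J^{\mathrm{op}} \cong J$; invoking the characterisation of the parametrised Joyal model structure from Appendix \ref{secparajoyal} (namely: its cofibrations are the monomorphisms, its fibrant objects are the inner fibrations over the base, and a morphism between fibrant objects is a fibration precisely when it is a fibrewise isofibration), it follows that the slicewise involution carries the parametrised Joyal model structure on $\sSet/(A \star B)$ to the parametrised Joyal model structure on $\sSet/(B^{\mathrm{op}} \star A^{\mathrm{op}})$. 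Hence it carries the second model structure on $(A+B)/\sSet/(A \star B)$ described in \S\ref{twomodstr} to the analogous one on $(B^{\mathrm{op}}+A^{\mathrm{op}})/\sSet/(B^{\mathrm{op}} \star A^{\mathrm{op}})$, and it intertwines the adjoint triples $L \dashv I \dashv R$ of \S\ref{bireflsec} — the reflection $L$ being built from a pushout along $\partial_0 X + \partial_1 X \lra A + B$, which is manifestly transported once the roles of $\partial_0$ and $\partial_1$ are exchanged. Since the ambivariant model structure is extracted from exactly this data by the purely formal procedure of Proposition \ref{restprop}, the isomorphism $\Cyl(A,B) \cong \Cyl(B^{\mathrm{op}},A^{\mathrm{op}})$ carries the ambivariant model structure to the ambivariant model structure, which is the assertion of the Observation.

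I expect the only non-bookkeeping step to be the verification, in the second paragraph, that the slicewise involution transports the parametrised Joyal model structure of Appendix \ref{secparajoyal}; everything else amounts to chasing opposites, joins and coproducts through the adjoint triple. Because that model structure is cofibrantly generated by an explicit set of generating cofibrations and generating trivial cofibrations which — under the canonical isomorphisms above — is self-dual, and because its fibrations between fibrant objects are the self-evidently self-dual notion of fibrewise isofibration, this verification reduces to checking that $(-)^{\mathrm{op}}$ preserves those generators and that class of maps, which it does. Conceptually, the content of the Observation is just that the three pieces of data characterising the ambivariant model structure in Theorem \ref{ambicylmod} — cofibrations are the monomorphisms; a cylinder $X$ is fibrant exactly when $X \lra A \star B$ is an inner fibration; a morphism of fibrant cylinders is a fibration exactly when it is an inner fibration — are all self-dual, using that $(-)^{\mathrm{op}}$ both preserves and reflects inner fibrations together with the isomorphism $(A \star B)^{\mathrm{op}} \cong B^{\mathrm{op}} \star A^{\mathrm{op}}$.
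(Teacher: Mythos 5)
Your proposal is correct, but your primary route is considerably heavier than the one the paper intends. The paper's justification is the one you only reach in your final paragraph: by \cite[Proposition E.1.10]{joyalbarcelona} (invoked repeatedly elsewhere, e.g.\ in Corollary \ref{easycor}), a model structure is determined by its cofibrations and its fibrant objects, so it suffices to observe that the transported model structure on $\Cyl(B^\mathrm{op},A^\mathrm{op})$ has the monomorphisms as cofibrations (since $(-)^\mathrm{op}$ preserves monomorphisms) and has as fibrant objects exactly the cylinders whose structure map to $B^\mathrm{op} \star A^\mathrm{op}$ is an inner fibration (since $(-)^\mathrm{op}$ preserves inner fibrations and $(A \star B)^\mathrm{op} \cong B^\mathrm{op} \star A^\mathrm{op}$ compatibly with the cylinder structure). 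Your main argument instead transports the entire construction --- the parametrised Joyal model structure, the factorisation category, the adjoint triple, and Proposition \ref{restprop} --- which works but forces you to verify the self-duality of the parametrised Joyal model structure as an intermediate step; note in passing that your appeal to a ``self-dual set of generating trivial cofibrations'' is slightly loose, since Cisinski's method produces an explicit class of anodyne extensions rather than an explicit set of generating trivial cofibrations, but this is harmless because the same determination-by-cofibrations-and-fibrant-objects argument (Theorem \ref{parajoyal} characterises both) closes that gap. What your heavier route buys is a check that the involution is compatible with every stage of the construction; what the paper's route buys is a two-line proof that needs nothing beyond the statement of Theorem \ref{ambicylmod}.
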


\section{Cylinders as presheaves} \label{pshsec}
In this section, we make Joyal's observation (see \S\ref{psh}) that the category $\Cyl(A,B)$ of $(A,B)$-cylinders is equivalent to the category of presheaves over $\Delta/A \times \Delta/B$ the basis for a deeper analysis of the ambivariant model structure on $\Cyl(A,B)$ introduced in \S\ref{secmodstrcyl}. 
 We use the related equivalence of categories 
\begin{equation*}
\Cyl(A,B) \simeq [(\Delta/A)^\mathrm{op},\sSet/B]
\end{equation*}
to construct  (Proposition \ref{reedyprop}) a \emph{Reedy model structure} on $\Cyl(A,B)$, induced from the \emph{covariant model structure} on $\sSet/B$. We prove (Theorem \ref{bousthm}) that the ambivariant model structure on $\Cyl(A,B)$ is a \emph{Bousfield localisation} of this Reedy model structure, for which the \emph{local objects} are those $(A,B)$-cylinders whose corresponding functors $(\Delta/A)^\mathrm{op} \lra \sSet/B$ send the \emph{final vertex inclusions} in $\Delta/A$ to covariant equivalences in $\sSet/B$. (A dual result relates the ambivariant model structure on $\Cyl(A,B)$ to the \emph{contravariant} model structure on $\sSet/A$ and the \emph{initial} vertex inclusions in $\Delta/B$; see Remark \ref{dualrmk}.) We will use  this result to prove the main theorem (Theorem \ref{pfqequiv}) of \S\ref{basechangesec}. 

\subsection{$\Cyl(A,B)$ as a presheaf category} \label{psh}
We begin with Joyal's observation that, for each pair of simplicial sets $A$ and $B$, the category $\Cyl(A,B)$ of $(A,B)$-cylinders is equivalent to the category of presheaves over the product $\Delta/A \times \Delta/B$ of the categories of simplices of $A$ and $B$. 
\begin{equation} \label{pivequiv}
\Cyl(A,B) \simeq [(\Delta/A \times \Delta/B)^\mathrm{op},\Set] 
\end{equation}
Under this equivalence, an $(A,B)$-cylinder $X$ with structure map $p \colon X \lra A \star B$ corresponds to the presheaf on $\Delta/A \times \Delta/B$ whose value at the object $(([m],\alpha),([n],\beta)) \in \Delta/A \times \Delta/B$ is the set of $(m+1+n)$-simplices $\sigma$ of $X$ such that $p(\sigma) = \alpha \star \beta$, i.e.\ such that the diagram below commutes.
\begin{equation*}
\cd{
& X \ar[d]^-p \\
\Delta[m] \star \Delta[n] \ar[ur]^-{\sigma} \ar[r]_-{\alpha \star \beta} & A \star B
}
\end{equation*} We refer the reader to \cite[Chapter 7]{joyalbarcelona} for further details of this equivalence.

Combining the equivalence (\ref{pivequiv}) with the standard equivalences
\begin{equation} \label{steq}
\sSet/C \simeq [(\Delta/C)^\mathrm{op},\Set],
\end{equation}
 we obtain further equivalences between $\Cyl(A,B)$ and the functor categories displayed below.
\begin{equation} \label{funeq}
[(\Delta/A)^\mathrm{op},\sSet/B] \simeq \Cyl(A,B) \simeq [(\Delta/B)^\mathrm{op},\sSet/A]
\end{equation}
(Note that, in the sequel, we shall use the word \emph{vertical} (resp.\ \emph{horizontal}) for those properties of $(A,B)$-cylinders that are most naturally expressed in terms of their corresponding functors $(\Delta/A)^\mathrm{op} \lra \sSet/B$ (resp.\ $(\Delta/B)^\mathrm{op} \lra \sSet/A$), etc.)

These equivalences of categories enable us to  employ various standard constructions (drawn from \cite[\S7]{MR2342834}, \cite[\S3]{MR3350089}, and \cite[\S4]{MR3217884}, to which we refer the reader for further details) in our analysis of the ambivariant model structure on $\Cyl(A,B)$. Following a brief survey of these constructions, the main argument of this section begins in earnest with Proposition \ref{tfaeprop}, in which we use these constructions, together with a result of Stevenson (Lemma \ref{rightcan}), to give an alternative characterisation of the ambifibrant objects of $\Cyl(A,B)$.

\subsection{Exterior (Leibniz) products} \label{extprodsec}
Recall that we may define the \emph{exterior product} bifunctor 
\begin{equation*} \label{extprod}
\cd{
[(\Delta/A)^\mathrm{op},\Set] \times [(\Delta/B)^\mathrm{op},\Set] \ar[r]^-{\boxtimes} & [(\Delta/A \times \Delta/B)^\mathrm{op},\Set]
}
\end{equation*}
in the way made manifest by the formula $(X\boxtimes Y)_{\alpha,\beta} = X_{\alpha} \times Y_{\beta}$. 
Furthermore, we may define the corresponding \emph{exterior Leibniz product} (or \emph{exterior pushout-product}) bifunctor between arrow categories 
\begin{equation*} \label{extleibprod}
\cd{
[(\Delta/A)^\mathrm{op},\Set]^\mathbf{2} \times [(\Delta/B)^\mathrm{op},\Set]^\mathbf{2} \ar[r]^-{\widehat{\boxtimes}} & [(\Delta/A\times \Delta/B)^\mathrm{op},\Set]^\mathbf{2},
}
\end{equation*}
which sends a pair of morphisms $(f \colon M \lra N, g \colon S \lra T)$ to the pushout-corner map 
\begin{equation*}
f \mathbin{\widehat{\boxtimes}} g \colon (M \boxtimes T) \cup_{M \boxtimes S} (N \boxtimes S) \lra N \boxtimes T
\end{equation*}
of the commutative square displayed below.
\begin{equation*}
\cd[@=2.5em]{
M \boxtimes S \ar[r]^-{M \mathbin{\boxtimes} g} \ar[d]_{f \mathbin{\boxtimes} S} & M \boxtimes T \ar[d]^-{f \mathbin{\boxtimes} T} \\
N \boxtimes S \ar[r]_-{N \mathbin{\boxtimes} g} & N \boxtimes T
}
\end{equation*}

It is straightforward to show that, under the equivalences  (\ref{pivequiv}) and (\ref{steq}), 
 the exterior product bifunctor 
 corresponds to the composite bifunctor
\begin{equation*}
\cd{
\sSet/A \times \sSet/B \ar[r]^-{\star} & \sSet/(A \star B) \ar[r]^-{L} & \Cyl(A,B),
}
\end{equation*}
where $L$ denotes the reflection described in \S\ref{reflsec}. 
Furthermore, the exterior Leibniz product bifunctor  
corresponds under these equivalences to the composite bifunctor
\begin{equation*}
\cd{
(\sSet/A)^{\mathbf{2}} \times (\sSet/B)^{\mathbf{2}} \ar[r]^-{\widehat{\star}} & (\sSet/(A \star B))^{\mathbf{2}} \ar[r]^-{L^{\mathbf{2}}} & \Cyl(A,B)^{\mathbf{2}},
}
\end{equation*}
whose first factor is the \emph{Leibniz join} (or \emph{pushout-join}) bifunctor. 
We shall henceforth denote these two  composite bifunctors by $\boxtimes$ and $\widehat{\boxtimes}$ respectively, so that $(X,p) \boxtimes (Y,q) =  L(X \star Y, p \star q)$ and $f \mathbin{\widehat{\boxtimes}} g = L(f \mathbin{\widehat{\star}} g)$.

\subsection{Left and right division} \label{divsec}
The exterior product bifunctor
\begin{equation*}
\cd{
\sSet/A \times \sSet/B \ar[r]^-{\boxtimes} & \Cyl(A,B)
}
\end{equation*}
 is cocontinuous in each variable, and therefore forms part of a two-variable adjunction 
\begin{equation*}
(\sSet/B)(S,M\backslash X) \cong (\Cyl(A,B))(M \boxtimes S,X) \cong (\sSet/A)(M,X/S),
\end{equation*} 
where, following \cite[\S7]{MR2342834}, we denote\footnote{As in \cite{MR2342834}, so too in this paper is there no risk of confusing these constructions with the similarly denoted slice constructions defined in \cite[\S3]{MR1935979}, since the latter do not here appear.} the two right adjoint bifunctors
\begin{equation*} \label{divops}
\cd{
(\sSet/A)^\mathrm{op} \times \Cyl(A,B) \ar[r]^-{\backslash} & \sSet/B
}
\qquad
\cd{
\Cyl(A,B) \times (\sSet/B)^\mathrm{op}  \ar[r]^-{/} & \sSet/A
}
\end{equation*}
by the symbols of left division and right division, as displayed.

The Yoneda lemma implies the important observation that the functor $(\Delta/A)^\mathrm{op} \lra \sSet/B$ to which an object $X \in \Cyl(A,B)$ corresponds under the equivalence (\ref{funeq}) is naturally isomorphic to the composite
\begin{equation*}
\cd[@C=2.5em]{
(\Delta/A)^\mathrm{op} \ar[r] & (\sSet/A)^\mathrm{op} \ar[r]^-{-\backslash X} & \sSet/B
}
\end{equation*}
of the Yoneda embedding and the ``left division of $X$'' functor.

Furthermore, under the  equivalences of \S\ref{psh}, these ``division'' bifunctors correspond to the evident \emph{weighted limit} bifunctors, as indicated below.
\begin{equation*}
\cd{
[(\Delta/A)^\mathrm{op},\Set]^\mathrm{op} \times [(\Delta/A)^\mathrm{op},\sSet/B] \ar[r]^-{\backslash} & \sSet/B
}
\end{equation*}
\begin{equation*}
\cd{
 [(\Delta/B)^\mathrm{op},\sSet/A] \times [(\Delta/B)^\mathrm{op},\Set]^\mathrm{op} \ar[r]^-{/} & \sSet/A
}
\end{equation*}
Thus, for $M \in \sSet/A$ and $X \in \Cyl(A,B)$, the object $M\backslash X \in \sSet/B$ is the limit of the functor $X \colon (\Delta/A)^\mathrm{op} \lra \sSet/B$ weighted by the functor $M \colon (\Delta/A)^\mathrm{op} \lra \Set$. 

\begin{observation}[Leibniz joins and lifting problems] \label{leiblift}
Let $X \in \Cyl(A,B)$ with structure map $p \colon X \lra A \star B$, and consider a pair of morphisms in $\sSet/A$ and $\sSet/B$ as displayed below.
\begin{equation*}
\cd[@C=1em]{
M \ar[rr]^-f \ar[dr]_-{nf} && N \ar[dl]^-n \\
& A
}
\qquad
\qquad
\cd[@C=1em]{
S \ar[rr]^-g \ar[dr]_-{tg} && T \ar[dl]^-t \\
& B
}
\end{equation*}
By adjointness (see e.g.\ \cite[Proposition 7.6]{MR2342834}, \cite[Proposition 3.3]{MR3350089}, or  \cite[Observation 4.11]{MR3217884}) and the formula $f \mathbin{\widehat{\boxtimes}} g = L(f\mathbin{\widehat{\star}} g)$ (see \S\ref{extprodsec} above), the following are equivalent. 
\begin{enumerate}[(i)]
\item Any lifting problem in $\sSet$ of the form 
\begin{equation*}
\cd{
(M \star T) \cup_{M \star S} (N\star S) \ar[r] \ar[d]_-{f \mathbin{\widehat{\star}} g} & X \ar[d]^-p \\
N \star T \ar[r]_-{n \mathbin{\star} t} \ar@{..>}[ur] & A \star B
}
\end{equation*}
has a solution.
\item The cylinder $X$ has the right lifting property in $\Cyl(A,B)$ with respect to the exterior Leibniz product $f \mathbin{\widehat{\boxtimes}}g \colon (M \boxtimes T) \cup_{M \boxtimes S} (N \boxtimes S) \lra N \boxtimes T$ of the pair of morphisms displayed above.
\item The morphism $f\backslash X \colon N \backslash X \lra M \backslash X$ has the right lifting property in $\sSet/B$ with respect to the morphism $g \colon (S,tg) \lra (T,t)$.
\item The morphism $X/g \colon X/T \lra X/S$ has the right lifting property in $\sSet/A$ with respect to the morphism $f \colon (M,nf) \lra (N,n)$.
\end{enumerate}
\end{observation}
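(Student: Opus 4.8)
The plan is to prove the four-fold equivalence with $(ii)$ as the hub: $(i) \Leftrightarrow (ii)$ will come from the reflection $L \dashv I$ of $\Cyl(A,B)$ in $\sSet/(A \star B)$ together with the identity $f \mathbin{\widehat{\boxtimes}} g = L(f \mathbin{\widehat{\star}} g)$ of \S\ref{extprodsec}, while $(ii) \Leftrightarrow (iii) \Leftrightarrow (iv)$ will come from the two-variable adjunction $(\boxtimes, \backslash, /)$ of \S\ref{divsec} via the standard Leibniz-adjunction calculus.

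First I would treat $(i) \Leftrightarrow (ii)$. Since the inclusion $I \colon \Cyl(A,B) \lra \sSet/(A \star B)$ is fully faithful with left adjoint $L$, and $X$ may be identified with $IX$, naturality of $L \dashv I$ shows that, for any morphism $h$ of $\sSet/(A \star B)$, a lifting problem in $\Cyl(A,B)$ for $L(h)$ against $X$ (that is, against the map from $X$ to the terminal cylinder $A \star B$) transposes to a lifting problem in $\sSet/(A \star B)$ for $h$ against $(X,p)$, with solutions corresponding bijectively to solutions. Taking $h = f \mathbin{\widehat{\star}} g$, condition $(ii)$ is thereby equivalent to the statement that $(X,p)$ has the right lifting property in $\sSet/(A \star B)$ with respect to $f \mathbin{\widehat{\star}} g$. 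It then remains to unwind the slice category $\sSet/(A \star B)$: since the structure map of $N \star T$ over $A \star B$ is $n \star t$, and hence the structure map of the domain of $f \mathbin{\widehat{\star}} g$ is the map induced by $n \star t$, such a lifting problem is precisely a commutative square of simplicial sets of the shape displayed in $(i)$, and its solutions in $\sSet/(A \star B)$ are exactly its solutions in $\sSet$.

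Next I would treat $(ii) \Leftrightarrow (iii)$ and $(ii) \Leftrightarrow (iv)$. The two-variable adjunction $(\boxtimes, \backslash, /)$ induces a two-variable adjunction on arrow categories (cf.\ \cite[Proposition 7.6]{MR2342834}, \cite[Proposition 3.3]{MR3350089}, \cite[Observation 4.11]{MR3217884}) under which, taking for the third variable the map from $X$ to the terminal object of $\Cyl(A,B)$, the statement that $X$ has the right lifting property with respect to $f \mathbin{\widehat{\boxtimes}} g$ is equivalent both to the statement that the associated Leibniz left-division has the right lifting property with respect to $g$ and to the statement that the associated Leibniz right-division has the right lifting property with respect to $f$. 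Since the division of the terminal cylinder $A \star B$ is, for every argument, the terminal object ($B$ in $\sSet/B$ on the left, $A$ in $\sSet/A$ on the right) --- immediate from the defining two-variable adjunction, $A \star B$ being terminal in $\Cyl(A,B)$ --- these Leibniz divisions degenerate to the plain divisions $f \backslash X \colon N \backslash X \lra M \backslash X$ and $X/g \colon X/T \lra X/S$, yielding $(iii)$ and $(iv)$ respectively.

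The computations are entirely formal, and I expect the only point requiring genuine care --- rather than routine bookkeeping --- to be the unwinding in the first step: one must check that transposition across $L \dashv I$ identifies diagonal fillers on the nose, and that the structure maps of the various slice categories align so that the transposed lifting problem is literally the square over $A \star B$ displayed in $(i)$. Everything else follows directly from the cited adjunction lemmas.
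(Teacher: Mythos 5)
Your proposal is correct and follows exactly the route the paper intends: the paper states this as an Observation whose ``proof'' is just the citation of the Leibniz two-variable adjunction lemmas together with the formula $f \mathbin{\widehat{\boxtimes}} g = L(f \mathbin{\widehat{\star}} g)$, and your writeup is precisely the unwinding of those citations (transposing across $L \dashv I$ for (i)$\Leftrightarrow$(ii), and using that divisions into the terminal cylinder $A \star B$ are terminal so the Leibniz divisions collapse to $f\backslash X$ and $X/g$ for (ii)$\Leftrightarrow$(iii)$\Leftrightarrow$(iv)). No gaps.
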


\begin{observation}[cellular presentations of monomorphisms] \label{cellobs}
Every monomorphism in $\sSet/A$ can be expressed as a countable composite of pushouts of coproducts of the boundary inclusions $b_m \colon (\partial\Delta[m],\partial\alpha) \lra (\Delta[m],\alpha)$ for $([m],\alpha) \in \Delta/A$. Furthermore, every monomorphism in $\Cyl(A,B)$  can be expressed as a countable composite of pushouts of coproducts of the exterior Leibniz products 
\begin{equation*}
b_m \mathbin{\widehat{\boxtimes}} b_n \colon (\Delta[m],\alpha) \boxtimes (\partial\Delta[n],\partial\beta) \cup (\partial\Delta[m],\partial\alpha) \boxtimes (\Delta[n],\beta) \lra (\Delta[m],\alpha) \boxtimes (\Delta[n],\beta)
\end{equation*} 
for $([m],\alpha) \in \Delta/A$ and $([n],\beta) \in \Delta/B$. These two statements follow from the Eilenberg--Zilber lemma applied to the categories $\Delta/A$ and $\Delta/A \times \Delta/B$ equipped with their Eilenberg--Zilber Reedy structures inherited from the standard Eilenberg--Zilber Reedy structure on $\Delta$ (see e.g.\ \cite[\S1.3]{MR3931682}).
\end{observation}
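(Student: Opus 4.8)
The plan is to reduce everything to the standard skeletal (relative cell) decomposition of monomorphisms in a presheaf category over an Eilenberg--Zilber Reedy category, transported across the equivalences recalled in \S\ref{psh}. Recall from \eqref{steq} and \eqref{pivequiv} that $\sSet/A \simeq [(\Delta/A)^{\mathrm{op}},\Set]$ and $\Cyl(A,B) \simeq [(\Delta/A \times \Delta/B)^{\mathrm{op}},\Set]$ are presheaf categories. The first observation I would make is that both indexing categories are Eilenberg--Zilber Reedy categories graded by $\mathbb{N}$: the simplex category $\Delta$ carries the standard such structure (degree $=$ dimension, direct maps the injections, inverse maps the surjections); the category of simplices $\Delta/A$ inherits one along the projection $\Delta/A \to \Delta$, with $([m],\alpha)$ of degree $m$; and a finite product of Eilenberg--Zilber Reedy categories is again one, with degree function the sum, so $\Delta/A \times \Delta/B$ is Eilenberg--Zilber with $(([m],\alpha),([n],\beta))$ of degree $m+n$ (see \cite[\S1.3]{MR3931682}).

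The second step is to identify, under these two equivalences, the boundary inclusions of representables. Under $\sSet/A \simeq [(\Delta/A)^{\mathrm{op}},\Set]$ the representable at $([m],\alpha)$ is $(\Delta[m],\alpha) \in \sSet/A$, and its Reedy boundary --- the subpresheaf of maps factoring through an object of strictly smaller degree --- is $(\partial\Delta[m],\partial\alpha)$, so the boundary inclusion at $([m],\alpha)$ is precisely $b_m$. For the product, I would use that the exterior product bifunctor corresponds to $L \circ \star$ (see \S\ref{extprodsec}), together with the general fact that, over a product of Reedy categories, the exterior product of presheaves sends a pair of representables to the representable at the product object and a pair of boundary inclusions to the boundary inclusion of that product representable; equivalently $\partial\,\mathrm{rep}_{(c,d)} = (\partial\,\mathrm{rep}_{c} \boxtimes \mathrm{rep}_{d}) \cup (\mathrm{rep}_{c} \boxtimes \partial\,\mathrm{rep}_{d})$, which holds because the epi--mono factorisation in a product Eilenberg--Zilber Reedy category is componentwise (and because $\boxtimes$ preserves colimits in each variable). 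Hence under $\Cyl(A,B) \simeq [(\Delta/A \times \Delta/B)^{\mathrm{op}},\Set]$ the representable at $(([m],\alpha),([n],\beta))$ is $(\Delta[m],\alpha) \boxtimes (\Delta[n],\beta)$, and its boundary inclusion is exactly the exterior Leibniz product $b_m \mathbin{\widehat{\boxtimes}} b_n$ appearing in the statement.

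Finally I would invoke the Eilenberg--Zilber lemma in the form of the relative skeletal filtration (see \cite[\S1.3]{MR3931682}): for any monomorphism $f \colon X \lra Y$ of presheaves over an Eilenberg--Zilber Reedy category $\mathcal{R}$ graded by $\mathbb{N}$, the subpresheaves $Z_n := X \cup \mathrm{sk}_n Y$ form a countable chain $X = Z_{-1} \subseteq Z_0 \subseteq Z_1 \subseteq \cdots$ with union $Y$, in which each inclusion $Z_{n-1} \hookrightarrow Z_n$ is a single pushout of a coproduct --- indexed by the isomorphism classes of nondegenerate elements of $Y$ of degree $n$ not lying in the image of $X$ --- of boundary inclusions of representables. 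Specialising $\mathcal{R}$ to $\Delta/A$ and to $\Delta/A \times \Delta/B$ and substituting the identifications of the previous step yields the two assertions. I do not anticipate any real obstacle: all the homotopical content is carried by the cited lemma, and the one point that repays care is the bookkeeping in the second step --- verifying that the abstract boundary inclusion of a representable over $\Delta/A \times \Delta/B$, structure maps to $A$ and to $B$ included, is literally the concrete Leibniz product $b_m \mathbin{\widehat{\boxtimes}} b_n$ --- which in turn reduces to the cocontinuity of $\boxtimes$ in each variable together with the explicit description of the reflection $L$ in \S\ref{reflsec}.
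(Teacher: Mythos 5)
Your proposal is correct and follows exactly the route the paper takes: the paper's entire justification is the citation of the Eilenberg--Zilber lemma for the Reedy categories $\Delta/A$ and $\Delta/A\times\Delta/B$, and your argument simply fills in the details of that citation (the product Eilenberg--Zilber structure, the identification of the boundary inclusions of representables with $b_m$ and $b_m\mathbin{\widehat{\boxtimes}}b_n$, and the relative skeletal filtration). The careful verification that the Reedy boundary of a product representable is the exterior Leibniz product is precisely the bookkeeping the paper leaves implicit, and you have it right.
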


Before we can state Proposition \ref{tfaeprop}, we must first make a couple of definitions.

\begin{recall}[the covariant model structure] \label{covrecall}
There exists a model structure (due to Joyal; see \cite[Chapter 8]{joyalbarcelona}) on the slice category $\sSet/B$  whose cofibrations are the monomorphisms and whose fibrant objects are the left fibrations with codomain $B$, which we shall sometimes call the \emph{left fibrant} objects of $\sSet/B$. A morphism between left fibrant objects is a fibration if and only if it is a left fibration. This model structure is called the \emph{covariant model structure} on $\sSet/B$. A morphism in $\sSet/B$ is a \emph{covariant equivalence} (see \cite[Definition 8.2]{joyalbarcelona}) if and only if it is a weak equivalence in this model structure.  
\end{recall}

\begin{definition}[Reedy fibrant cylinders]
A cylinder $X \in\Cyl(A,B)$ is \emph{vertically Reedy left fibrant} if the morphism on the left below is a left fibration in $\sSet/B$ for every $([m],\alpha) \in \Delta/A$.
\begin{equation*}
b_m \backslash X \colon (\Delta[m],\alpha)\backslash X \lra (\partial\Delta[m],\partial\alpha)\backslash X
\qquad
\qquad
X / b_n \colon X / (\Delta[n],\beta) \lra X/(\partial\Delta[n],\partial\beta)
\end{equation*}
 Dually, a cylinder $X \in\Cyl(A,B)$ is \emph{horizontally Reedy right fibrant} if the morphism on the right above is a right fibration in $\sSet/A$ for every $([n],\beta) \in \Delta/B$.
\end{definition}

\begin{observation}[Reedy fibrant cylinders recast] \label{reedyrecast}
It follows from Observation \ref{cellobs} that a cylinder $X \in \Cyl(A,B)$ is vertically Reedy left fibrant if and only if the morphism $f\backslash X \colon N\backslash X \lra M \backslash X$ is a left fibration in $\sSet/B$ for every monomorphism $f \colon M \lra N$ in $\sSet/A$. In particular, if $X \in \Cyl(A,B)$ is vertically Reedy left fibrant, then the object $M \backslash X \in \sSet/B$ is left fibrant for every object $M \in \sSet/A$. 

Observation \ref{leiblift} implies that a cylinder $X \in \Cyl(A,B)$ is horizontally Reedy right fibrant if and only if the morphism
\begin{equation*}
h^k_m \backslash X \colon (\Delta[m],\alpha) \backslash X \lra (\Lambda^k[m],\Lambda^k[\alpha]) \backslash X,
\end{equation*}
induced by the horn inclusion $h^k_m \colon \Lambda^k[m] \lra \Delta[m]$, is a trivial fibration in $\sSet/A$ for every $m \geq 1$, $0 < k \leq m$, and $([m],\alpha)\in \Delta/A$. 
\end{observation}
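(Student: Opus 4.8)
The plan is to treat the two assertions of the Observation separately: the statement about vertically Reedy left fibrant cylinders follows from the cellular presentation of monomorphisms in $\sSet/A$ (Observation~\ref{cellobs}), while the statement about horizontally Reedy right fibrant cylinders follows from the translation of lifting problems recorded in Observation~\ref{leiblift}.

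For the first assertion, the ``if'' direction is trivial, since each boundary inclusion $b_m$ is a monomorphism in $\sSet/A$. For the ``only if'' direction, I would first observe that, for fixed $X$, the functor $-\backslash X \colon (\sSet/A)^{\mathrm{op}} \lra \sSet/B$ sends colimits in $\sSet/A$ to limits in $\sSet/B$; this is immediate from the two-variable adjunction of \S\ref{divsec} together with the cocontinuity of $\boxtimes$ in its first variable. Now present an arbitrary monomorphism $f \colon M \lra N$ in $\sSet/A$ as a countable composite $M = M_0 \to M_1 \to M_2 \to \cdots$, with $N = \operatorname{colim}_i M_i$, of pushouts of coproducts of boundary inclusions $b_m$ (Observation~\ref{cellobs}). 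Applying $-\backslash X$ converts this into a tower $\cdots \to M_2\backslash X \to M_1\backslash X \to M_0\backslash X = M\backslash X$ whose limit is $N\backslash X$ and whose limit projection is $f\backslash X$, and converts the presentation of each $M_i \to M_{i+1}$ into a base change of a product of maps of the form $b_m\backslash X$. Under the hypothesis that every $b_m\backslash X$ is a left fibration in $\sSet/B$, each such product is a left fibration, each such base change is a left fibration, and hence so is the limit projection $f\backslash X$, since the class of left fibrations -- being defined by a right lifting property -- is closed under products, base change, and limits of towers. Finally, taking $M = \emptyset$, the initial object of $\sSet/A$, settles the ``in particular'': here $\emptyset\backslash X$ is the weighted limit of $X$ with empty weight, namely the terminal object $(B, \mathrm{id}_B)$ of $\sSet/B$, so that for any $M \in \sSet/A$ the (monic) map $\emptyset \to M$ exhibits the structure map $M\backslash X \to B$ as a left fibration, i.e.\ exhibits $M\backslash X$ as left fibrant.

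For the second assertion, recall that a morphism of $\sSet/A$ is a right fibration if and only if it has the right lifting property with respect to the horn inclusions $(\Lambda^k[m], \Lambda^k[\alpha]) \lra (\Delta[m], \alpha)$ for all $([m],\alpha) \in \Delta/A$ and $0 < k \leq m$. Hence $X$ is horizontally Reedy right fibrant if and only if, for every $([n],\beta) \in \Delta/B$, the morphism $X/b_n$ has the right lifting property with respect to each such $h^k_m$. By the equivalence of (iii) and (iv) in Observation~\ref{leiblift}, applied with $f = h^k_m$ and $g = b_n$, this holds if and only if $h^k_m\backslash X$ has the right lifting property with respect to $b_n$ in $\sSet/B$, for all the relevant $m$, $k$, $\alpha$ and all $([n],\beta) \in \Delta/B$. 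Since (again by Observation~\ref{cellobs}) having the right lifting property against every boundary inclusion $b_n$ is the same as having it against every monomorphism of $\sSet/B$, this says precisely that each $h^k_m\backslash X$ is a trivial fibration in $\sSet/B$; reordering the quantifiers gives the stated characterisation.

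Since this Observation merely assembles machinery already in place, I do not expect a genuine obstacle. The one point that repays care is the variance bookkeeping in the first assertion -- checking that $-\backslash X$ converts the cellular colimit presentation of a monomorphism into an inverse tower of left fibrations -- together with the standard fact, proved by a routine transfinite induction on lifting problems, that any class of morphisms defined by a right lifting property is closed under transfinite cocomposition.
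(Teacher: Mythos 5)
Your argument is correct and fills in the details of exactly the route the paper intends: the cellular presentation of Observation \ref{cellobs} (together with the closure of lifting-property classes under products, base change and limits of towers) for the first assertion, and the adjunction equivalences of Observation \ref{leiblift} for the second. One incidental point in your favour: since the codomain of $h^k_m\backslash X$ lies in $\sSet/B$, the phrase ``trivial fibration in $\sSet/A$'' in the statement is evidently a typo for $\sSet/B$, and your reading is the correct one (harmless either way, as trivial fibrations are detected on underlying simplicial sets).
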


For each $m \geq 1$, let $i_m \colon \Delta[0] \lra \Delta[m]$ denote the \emph{final vertex inclusion}, which picks out the final vertex $m$ of $\Delta[m]$.

\begin{definition}[vertically right local cylinders] \label{vloc}  A cylinder  $X \in \Cyl(A,B)$ is 
\emph{vertically right local} if the morphism 
\begin{equation*}
i_m\backslash X \colon (\Delta[m],\alpha)\backslash X \lra (\Delta[0],\alpha_m)\backslash X
\end{equation*}
is a covariant equivalence in $\sSet/B$ for every $m \geq 1$ and $([m],\alpha) \in \Delta/A$.
\end{definition}

Recall that a class $\mathsf{C}$ of monomorphisms in a category is said to have the \emph{right cancellation property} if 
($u \in \mathsf{C}$ and $vu \in \mathsf{C}$) $\implies$ $v \in \mathsf{C}$ 
 for any composable pair of monomorphisms $u$ and $v$.

\begin{lemma}[Stevenson] \label{rightcan}
Let $\mathsf{C}$ be a class of monomorphisms of simplicial sets which is closed under composition, stable under pushout, and has the right cancellation property. Then the following are equivalent.
\begin{enumerate}[font=\normalfont,  label=(\roman*)]
\item $\mathsf{C}$ contains the horn inclusion $\Lambda^k[m] \lra \Delta[m]$ for every $m \geq 1$ and $0 < k \leq m$.
\item $\mathsf{C}$ contains the final vertex inclusion $i_m \colon \Delta[0] \lra \Delta[m]$ for every $m \geq 1$.
\end{enumerate}
\end{lemma}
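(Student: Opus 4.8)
The plan is to prove the two implications separately, and \textup{(i)} $\Rightarrow$ \textup{(ii)} is the easy one. For it I would use the standard fact that, for each $m \geq 1$, the final vertex inclusion $i_m \colon \Delta[0] \lra \Delta[m]$ admits a finite cellular presentation built from outer right horn inclusions: the Eilenberg--Zilber pairing $c \leftrightarrow c \cup \{m\}$ on the nondegenerate simplices $c$ of $\Delta[m]$ that omit the final vertex $m$ exhibits $i_m$ as a finite composite of pushouts of outer right horn inclusions $\Lambda^q[q] \lra \Delta[q]$ with $q \leq m$ (at each step the face of $c \cup \{m\}$ opposite its final vertex is $c$ itself, so the attached horn omits only that top face). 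Since $\mathsf{C}$ contains all such horn inclusions by \textup{(i)}, is stable under pushout, and is closed under composition, we conclude $i_m \in \mathsf{C}$, which is \textup{(ii)}.

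For \textup{(ii)} $\Rightarrow$ \textup{(i)} --- the substantial direction --- the key observation is that, for each $0 < k \leq m$, the inclusion $i_m$ factors as $\Delta[0] = \{m\} \hookrightarrow \Lambda^k[m] \hookrightarrow \Delta[m]$; since $i_m \in \mathsf{C}$ by \textup{(ii)}, the right cancellation property reduces the problem to showing that $\{m\} \hookrightarrow \Lambda^k[m]$ lies in $\mathsf{C}$. The preparatory step is an auxiliary claim, proved by induction on $n \geq 1$: if $\mathsf{C}$ contains $i_1, \ldots, i_{n-1}$, then for every nonempty $J \subseteq \{0, \ldots, n-1\}$ the inclusion $\{n\} \hookrightarrow \bigcup_{j \in J} d_j\Delta[n]$ lies in $\mathsf{C}$; I call a union of this form --- all of whose maximal faces pass through the final vertex $n$ --- a \emph{final} subcomplex of $\Delta[n]$. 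This is proved by attaching the faces $d_j\Delta[n]$, $j \in J$, one at a time in increasing order of $j$: the first attachment is a copy of $i_{n-1}$ (as $d_j\Delta[n] \cong \Delta[n-1]$ carries the vertex $n$ to the final vertex), and each subsequent attachment of $d_{j_r}\Delta[n]$ is a pushout of the inclusion, into $d_{j_r}\Delta[n] \cong \Delta[n-1]$, of the portion of $d_{j_r}\Delta[n]$ already built --- which one identifies with the nonempty final subcomplex $\bigcup_{s<r} d_{j_s}\Delta[n-1]$ of $\Delta[n-1]$, using $j_s < j_r \leq n-1$ --- and this attaching map lies in $\mathsf{C}$ by the inductive hypothesis at $n-1$ together with $i_{n-1} \in \mathsf{C}$ and right cancellation.

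Granting the auxiliary claim at every level --- legitimate here, since \textup{(ii)} supplies every $i_m$ --- I would prove \textup{(i)} by a second induction on $m$, the base case $m=1$ holding because $\Lambda^1[1] \lra \Delta[1]$ is the map $i_1$. Fix $0 < k \leq m$ and set $H := \bigcup_{0 \leq i \leq m-1,\, i \neq k} d_i\Delta[m]$, a nonempty final subcomplex of $\Delta[m]$, so that $\{m\} \hookrightarrow H$ lies in $\mathsf{C}$ by the auxiliary claim. If $k = m$ then $H = \Lambda^m[m]$, and right cancellation against $i_m$ finishes the argument. If $0 < k < m$, then $\Lambda^k[m] = H \cup d_m\Delta[m]$ with $H \cap d_m\Delta[m] \cong \Lambda^k[m-1]$; since $\Lambda^k[m-1] \lra \Delta[m-1]$ lies in $\mathsf{C}$ by the inductive hypothesis at $m-1$, its pushout $H \hookrightarrow \Lambda^k[m]$ lies in $\mathsf{C}$, and composing with $\{m\} \hookrightarrow H$ and then cancelling against $i_m$ yields $\Lambda^k[m] \lra \Delta[m] \in \mathsf{C}$, as required.

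I expect the main obstacle to be the combinatorial bookkeeping in \textup{(ii)} $\Rightarrow$ \textup{(i)}: the faces (of $\Lambda^k[m]$, and of the final subcomplexes arising in the auxiliary claim) must be attached in an order for which every attaching map is again of ``final'' (right-anodyne) type --- a careless order would force pushouts along boundary inclusions $\partial\Delta[n-1] \hookrightarrow \Delta[n-1]$, which need not lie in $\mathsf{C}$ --- and one must check that the intersections of faces that arise are exactly the horns and final subcomplexes named above. Everything else is a routine application of closure under composition, stability under pushout, and the right cancellation property.
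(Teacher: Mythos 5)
Your proof is correct. Note, however, that the paper does not prove this lemma at all: its ``proof'' is a citation to Stevenson's Proposition~2.6 in \emph{Covariant model structures and simplicial localization}, so you have supplied a self-contained argument where the paper defers to the literature. Your argument is essentially a reconstruction of Stevenson's: the direction (i)~$\Rightarrow$~(ii) via the standard cellular presentation of $i_m$ by right outer horns is exactly the classical right-anodyne decomposition, and the direction (ii)~$\Rightarrow$~(i) via the factorisation $\{m\} \hookrightarrow \Lambda^k[m] \hookrightarrow \Delta[m]$, right cancellation against $i_m$, and an induction showing that $\{n\} \hookrightarrow \bigcup_{j \in J} d_j\Delta[n]$ lies in $\mathsf{C}$ for $J \subseteq \{0,\dots,n-1\}$ is the same mechanism Stevenson uses. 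The combinatorial bookkeeping you worried about checks out: attaching the faces in increasing order of $j$ does guarantee that each intersection $\bigl(\bigcup_{s<r} d_{j_s}\Delta[n]\bigr) \cap d_{j_r}\Delta[n]$ is identified, under the order isomorphism $d_{j_r}\Delta[n] \cong \Delta[n-1]$, with the final subcomplex $\bigcup_{s<r} d_{j_s}\Delta[n-1]$ (all indices being $< j_r \leq n-1$), and the identification $H \cap d_m\Delta[m] \cong \Lambda^k[m-1]$ for $0 < k < m$ is correct. One small point to tidy: the $n=1$ instance of your auxiliary claim asserts that the identity of $\Delta[0]$ lies in $\mathsf{C}$, which does not follow from the stated axioms (right cancellation only yields identities of codomains of maps already in $\mathsf{C}$). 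This is harmless, since the only place that degenerate instance is invoked --- the second attachment at level $n=2$ --- the attaching map $\bigcup_{s<r} d_{j_s}\Delta[1] \to \Delta[1]$ is literally $i_1$ and so lies in $\mathsf{C}$ directly; but you should either start the auxiliary induction at $n=2$ or note this explicitly.
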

\begin{proof}
See \cite[Proposition 2.6]{MR3683375}. 
\end{proof}

\begin{proposition} \label{tfaeprop}
Let $X\in \Cyl(A,B)$. The following are equivalent.
\begin{enumerate}[font=\normalfont,  label=(\roman*)]
\item The canonical morphism $X \lra A \star B$ is an inner fibration (i.e.\ $X$ is ambifibrant).
\item $X$ is vertically Reedy left fibrant and horizontally Reedy right fibrant.
\item $X$ is vertically Reedy left fibrant and vertically right local.
\end{enumerate}
\end{proposition}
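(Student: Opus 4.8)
The plan is to prove the equivalence of (i), (ii), and (iii) by establishing the cycle (i)$\Rightarrow$(ii)$\Rightarrow$(iii)$\Rightarrow$(i), leaning on the adjointness dictionary of Observation \ref{leiblift}, the cellular presentations of Observation \ref{cellobs}, and Stevenson's cancellation principle (Lemma \ref{rightcan}). The role of the first two ingredients is to translate a lifting property of the structure map $p \colon X \lra A \star B$ against morphisms built from joins into a lifting property of the division morphisms $f \backslash X$ and $X / g$; the role of the third is to trade the horn inclusions appearing in the ``Reedy right fibrant'' condition for the final-vertex inclusions appearing in the ``right local'' condition.

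\emph{(i)$\Rightarrow$(ii).} Suppose $p \colon X \lra A \star B$ is an inner fibration, i.e.\ $X$ has the right lifting property in $\sSet$ against every inner horn inclusion $\Lambda^k[m+1+n] \lra \Delta[m+1+n]$, compatibly over $A \star B$. Every such inner horn inclusion over $A \star B$ can be written as a Leibniz join $f \mathbin{\widehat{\star}} g$ where $f$ is a boundary or horn inclusion in $\sSet/A$ and $g$ a boundary or horn inclusion in $\sSet/B$ — this is the join decomposition of horns of a join, and the ``inner'' condition on the horn translates into the inclusions $f,g$ being suitable boundary/horn inclusions with at least one of them a horn and with the horn indices avoiding the extreme vertices appropriately. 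Feeding these through the equivalences of Observation \ref{leiblift}, the right lifting property of $p$ against $f \mathbin{\widehat{\star}} g$ is equivalent to the right lifting property of $f \backslash X$ against $g$ (and symmetrically of $X / g$ against $f$). Running over the relevant $f$ and using Observation \ref{cellobs}, one checks $b_m \backslash X$ lifts against every monomorphism of $\sSet/B$ with left-fibrant-type behaviour — more precisely, taking $f = b_m$ and letting $g$ range over inner horns of $B$-simplices yields that $b_m \backslash X$ is an inner fibration, while taking $g$ to be a final-vertex inclusion (or rather using that the inner horns together with the relevant left-horn inclusions $\Lambda^k[n] \lra \Delta[n]$ with $k < n$ are available from the join decomposition) upgrades this to a left fibration; this is exactly vertical Reedy left fibrancy. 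The symmetric argument, swapping the roles of $A$ and $B$ and reading off $X / g$, gives horizontal Reedy right fibrancy. (Alternatively, by Observation \ref{dualobs} the horizontal statement is the vertical statement for the opposite cylinder.)

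\emph{(ii)$\Rightarrow$(iii).} Here I would invoke Lemma \ref{rightcan}. Fix $([m],\alpha) \in \Delta/A$ and consider the class $\mathsf{C}_\alpha$ of those monomorphisms $j \colon S \lra T$ in $\sSet/A$ (with $T$ over $\Delta[m] \xrightarrow{\alpha} A$, say, or better: the class of monomorphisms $j$ of simplicial sets such that $j \backslash X$, formed using the relevant structure maps, is a trivial fibration in $\sSet/A$) — actually the cleaner route is to apply Lemma \ref{rightcan} in $\sSet/A$ to the class of monomorphisms $j$ for which $X/j$ is a trivial fibration in $\sSet/A$. By Observation \ref{reedyrecast}, horizontal Reedy right fibrancy says precisely that this class contains all horn inclusions $h^k_m$ with $0 < k \le m$; the class is closed under composition and stable under pushout because trivial fibrations are, and it has the right cancellation property by a two-out-of-three argument for trivial fibrations (if $X/j$ and $X/(j'j)$ are trivial fibrations then, since $X/(j'j) = (X/j')\circ(\text{restriction})$... more carefully, using $X/- $ turns composites of monos into composites in the other order and one uses that trivial fibrations satisfy 2-out-of-6 / the relevant cancellation). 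Hence by Lemma \ref{rightcan} the class also contains every final vertex inclusion $i_m \colon \Delta[0] \lra \Delta[m]$. Unwinding via Observation \ref{leiblift}/\ref{reedyrecast}, this says $i_m \backslash X \colon (\Delta[m],\alpha)\backslash X \lra (\Delta[0],\alpha_m)\backslash X$ is a trivial fibration — in particular a covariant equivalence — in $\sSet/B$, for every $m \ge 1$ and every $([m],\alpha)$. That is vertical right locality. (The hypothesis of vertical Reedy left fibrancy is carried along unchanged into (iii).)

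\emph{(iii)$\Rightarrow$(i).} This is the main obstacle, since we must go from a condition about \emph{covariant equivalences} (a weak notion) back to a strict lifting property. The strategy is: assume $X$ is vertically Reedy left fibrant and vertically right local, and prove $X$ is horizontally Reedy right fibrant, whence (i) follows by — in effect — reversing (i)$\Rightarrow$(ii), i.e.\ reassembling the vertical and horizontal Reedy conditions into the inner-fibration condition via the join decomposition and Observation \ref{leiblift}. By Observation \ref{reedyrecast}, horizontal Reedy right fibrancy amounts to $h^k_m \backslash X$ being a trivial fibration in $\sSet/A$ for all $0 < k \le m$. Now here is where left fibrancy does the work: because $X$ is vertically Reedy left fibrant, each $M \backslash X$ is a left fibrant object of $\sSet/B$ and each $f \backslash X$ (for $f$ a mono in $\sSet/A$) is a left fibration, so covariant equivalences between the objects $(\Delta[m],\alpha)\backslash X$ can be detected and manipulated as if they were genuine weak equivalences between fibrant objects: in particular, a left fibration which is a covariant equivalence is a trivial fibration. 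One then argues by induction on $m$ (or a ``relative'' version) that the maps $h^k_m \backslash X$ are trivial fibrations, using that the final-vertex inclusion $i_m$ factors through $\Lambda^k[m]$ (for $0 < k \le m$) as $\Delta[0] \hookrightarrow \Lambda^k[m] \hookrightarrow \Delta[m]$, that $i_m \backslash X$ is a covariant equivalence by hypothesis, that the inclusion $\Delta[0] \hookrightarrow \Lambda^k[m]$ is handled by locality of lower-dimensional simplices together with left fibrancy (its image under $-\backslash X$ is a covariant equivalence between left fibrant objects, hence — being again a left fibration by Reedy left fibrancy once we know the relevant horn-of-horn is built from monos and inner-ish pieces — a trivial fibration), and then two-out-of-three for covariant equivalences forces $h^k_m \backslash X$ to be a covariant equivalence; being simultaneously a left fibration (again by vertical Reedy left fibrancy, since $h^k_m$ is a monomorphism in $\sSet/A$), it is a trivial fibration. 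Reading off from Observation \ref{leiblift} that $X$ thereby lifts against every $f \mathbin{\widehat{\star}} g$ with $f$ or $g$ an inner horn and the other a boundary — i.e.\ against every inner horn inclusion into $A \star B$ — yields that $p \colon X \lra A \star B$ is an inner fibration, completing the cycle.

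The delicate point throughout — and the one I expect to require the most care — is the bookkeeping in the join decomposition: which Leibniz joins $f \mathbin{\widehat{\star}} g$ of boundary/horn inclusions over $A$ and $B$ reproduce exactly the \emph{inner} horn inclusions over $A \star B$ (and not, say, outer ones), and correspondingly which horn inclusions in $\sSet/A$ versus $\sSet/B$ one is entitled to use when detecting (left, right, trivial) fibrancy of the division morphisms. Once that dictionary is pinned down, the implications (i)$\Rightarrow$(ii) and the final reassembly in (iii)$\Rightarrow$(i) are formal, (ii)$\Rightarrow$(iii) is Lemma \ref{rightcan}, and the real content of (iii)$\Rightarrow$(i) is the standard fact that a left fibration which is a covariant equivalence is a trivial fibration, applied fibrewise over the simplices of $A$.
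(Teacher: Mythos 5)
Your proposal follows the paper's proof in all essentials: the equivalence (i)$\Leftrightarrow$(ii) via the decomposition of inner horn inclusions over $A \star B$ as Leibniz joins together with the adjointness dictionary of Observation \ref{leiblift}, and the equivalence (ii)$\Leftrightarrow$(iii) via Lemma \ref{rightcan} applied to the class of monomorphisms whose division against $X$ is well behaved. Two points in your write-up need repair, however. First, in (ii)$\Rightarrow$(iii) you verify the right cancellation property of your class by appealing to ``two-out-of-six for trivial fibrations''; trivial fibrations satisfy neither two-out-of-three nor two-out-of-six, so this step fails as written. The correct move --- and the one the paper makes --- is to define $\mathsf{C}$ as the class of monomorphisms $f$ in $\sSet$ such that $f\backslash X$ is a \emph{covariant equivalence} for every choice of structure map; covariant equivalences do satisfy two-out-of-three (whence right cancellation, since $-\backslash X$ reverses composites), and under the standing hypothesis that $X$ is vertically Reedy left fibrant each $f\backslash X$ is a left fibration between left fibrant objects, so ``covariant equivalence'' and ``trivial fibration'' coincide for these maps. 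You clearly know this fact (you invoke it in your (iii)$\Rightarrow$(i) sketch), but it must be built into the definition of the class fed to Lemma \ref{rightcan}, not tacked on afterwards.

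Second, your claim that \emph{every} inner horn inclusion over $A \star B$ is a Leibniz join $f \mathbin{\widehat{\star}} g$ is not quite right: the lifting problems whose bottom simplex $\sigma \colon \Delta[l] \lra A \star B$ factors through $A + B$ are not of this form, and they must be disposed of separately --- trivially so, because the pullback of $X \lra A \star B$ along $A + B \lra A \star B$ is an isomorphism (see \S\ref{reflsec}). This matters for the reassembly step at the end of your (iii)$\Rightarrow$(i), where you need to solve \emph{all} inner horn lifting problems. Finally, note that your hand-rolled induction in (iii)$\Rightarrow$(i) (factoring $i_m$ through $\Lambda^k[m]$, etc.) is an unnecessary re-proof of one direction of Lemma \ref{rightcan}: since that lemma is stated as an equivalence, a single application, with the class $\mathsf{C}$ set up correctly as above, yields (ii)$\Leftrightarrow$(iii) in both directions at once, which is exactly how the paper proceeds.
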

\begin{proof}
We first prove the equivalence (i) $\Longleftrightarrow$ (ii).  By definition, the canonical morphism $X \lra A \star B$ is an inner fibration if and only if every lifting problem of the form displayed below,
\begin{equation*}
\cd{
\Lambda^k[l] \ar[d] \ar[r] & X \ar[d] \\
\Delta[l] \ar@{..>}[ur] \ar[r]_-{\sigma} & A \star B
}
\end{equation*}
where $l \geq 2$ and $0 < k  < l$, has a solution. These lifting problems fall into two cases. If $\sigma \colon \Delta[l] \lra A \star B$ factors through the inclusion $A +B \lra A \star B$, then the lifting problem is solved trivially, since the pullback of $X \lra A \star B$ along $A +B \lra A \star B$ is an isomorphism (see \S\ref{reflsec}). Otherwise, $\sigma$ is of the form $\alpha \star \beta \colon \Delta[m] \star \Delta[n] \lra A \star B$ for some $([m],\alpha) \in \Delta/A$ and $([n],\beta) \in \Delta/B$, in which case the displayed inner horn inclusion is either (see \cite[Lemma 3.3]{MR1935979})  the Leibniz join 
\begin{equation*}
(\Lambda^k[m] \lra \Delta[m])\mathbin{\widehat{\star}}(\partial\Delta[n] \lra \Delta[n])
\end{equation*} if $0 < k \leq m$, or the Leibniz join 
\begin{equation*}
(\partial\Delta[m] \lra \Delta[m])\mathbin{\widehat{\star}}(\Lambda^{k-m-1}[n] \lra \Delta[n])
\end{equation*}
if $m+1 \leq k < m + 1 + n$.

It thus follows by Observation \ref{leiblift} 
 that the canonical morphism $X \lra A \star B$ is an inner fibration if and only if $(\Delta[m],\alpha) \backslash X \lra (\partial\Delta[m],\partial\alpha) \backslash X$ is a left fibration for all $([m],\alpha) \in \Delta/A$ and $X/(\Delta[n],\beta) \lra X / (\partial\Delta[n],\partial\beta)$ is a right fibration for all $([n],\beta) \in \Delta/B$, that is, if and only if $X$ is vertically Reedy left fibrant and  horizontally Reedy right fibrant.

We now prove the equivalence (ii) $\Longleftrightarrow$ (iii). Suppose that $X$ is vertically Reedy left fibrant. 
Let $\mathsf{C}$ denote the class of monomorphisms $f \colon M \lra N$ in $\sSet$ with the property that, for any morphism $n \colon N \lra A$ in $\sSet$, the induced morphism $f \backslash X \colon (N,n)\backslash X \lra (M,nf) \backslash X$ is a covariant equivalence (or equivalently a trivial fibration, since $X$ is vertically Reedy left fibrant) in $\sSet/B$. This class $\mathsf{C}$ of monomorphisms is closed under composition, stable under pushout, and has the right cancellation property. Hence Lemma \ref{rightcan} implies (via Observation \ref{reedyrecast}) that $X$ is horizontally Reedy right fibrant if and only if it is vertically right local.
\end{proof}

We now use the equivalence $\Cyl(A,B) \simeq [(\Delta/A)^\mathrm{op},\sSet/B]$ (see \S\ref{psh}) to construct a ``Reedy'' model structure on $\Cyl(A,B)$, whose weak equivalences  are described in the following definition.

\begin{definition}[vertical covariant equivalences]
A morphism $f \colon X \lra Y$ in $\Cyl(A,B)$ is  a \emph{vertical covariant equivalence} if the morphism
\begin{equation*}
(\Delta[m],\alpha)\backslash f \colon (\Delta[m],\alpha)\backslash X \lra (\Delta[m],\alpha)\backslash Y
\end{equation*}
is a covariant equivalence in $\sSet/B$ for every $([m],\alpha) \in \Delta/A$.
\end{definition}

\begin{proposition} \label{reedyprop}
There exists a model structure on the category $\Cyl(A,B)$ whose cofibrations are the monomorphisms, whose weak equivalences are the vertical covariant equivalences,  and whose fibrant objects are the vertically Reedy left fibrant objects. 
\end{proposition}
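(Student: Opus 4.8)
The plan is to obtain this model structure by transporting the \emph{Reedy} model structure along the equivalence $\Cyl(A,B) \simeq [(\Delta/A)^{\mathrm{op}},\sSet/B]$ of~\eqref{funeq}. By Observation~\ref{cellobs}, the category $\Delta/A$ carries its Eilenberg--Zilber Reedy structure (inherited from $\Delta$; see \cite[\S1.3]{MR3931682}), so its opposite $(\Delta/A)^{\mathrm{op}}$ is a Reedy category; and the covariant model structure on $\sSet/B$ (Recall~\ref{covrecall}) is a combinatorial model category. Hence the Reedy model structure on the functor category $[(\Delta/A)^{\mathrm{op}},\sSet/B]$ exists and is cofibrantly generated, by the classical theorem of Reedy (see \cite{reedy} or \cite[Chapter~15]{MR1944041}). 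Transporting it along the equivalence yields a model structure on $\Cyl(A,B)$, and it then remains only to identify its three distinguished classes of morphisms.

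Recall from \S\ref{divsec} that, under the equivalence~\eqref{funeq}, a cylinder $X$ corresponds to the functor sending $([m],\alpha)\in\Delta/A$ to the object $(\Delta[m],\alpha)\backslash X \in \sSet/B$. The \emph{weak equivalences} of the Reedy model structure are the objectwise ones, so they correspond to those morphisms $f$ in $\Cyl(A,B)$ for which $(\Delta[m],\alpha)\backslash f$ is a covariant equivalence for every $([m],\alpha)$ --- that is, by definition, the vertical covariant equivalences. For the \emph{fibrant objects}: a functor $F\colon (\Delta/A)^{\mathrm{op}}\lra\sSet/B$ is Reedy fibrant if and only if, for each $([m],\alpha)$, the matching map $F([m],\alpha)\lra M_{([m],\alpha)}F$ is a fibration in $\sSet/B$, i.e.\ a left fibration. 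Since $\Delta/A$ is Eilenberg--Zilber, the latching object of the representable weight $(\Delta[m],\alpha)\in\sSet/A$ is its boundary subobject $(\partial\Delta[m],\partial\alpha)$; hence the matching object $M_{([m],\alpha)}X$ is the weighted limit $(\partial\Delta[m],\partial\alpha)\backslash X$ and the matching map is precisely $b_m\backslash X$. Thus the Reedy fibrant cylinders are exactly the vertically Reedy left fibrant ones.

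For the \emph{cofibrations}, recall that the covariant model structure on $\sSet/B$ is cofibrantly generated with the boundary inclusions $b_n\colon (\partial\Delta[n],\partial\beta)\lra(\Delta[n],\beta)$ as generating cofibrations. Consequently the generating cofibrations of the Reedy model structure on $[(\Delta/A)^{\mathrm{op}},\sSet/B]$ are the Leibniz products, formed against $\boxtimes$, of the boundary inclusions $b_m$ of the representable weights in $\sSet/A$ with the $b_n$; by the identification of $\boxtimes$ with $L\circ\star$ from \S\ref{extprodsec}, these are exactly the morphisms $b_m\mathbin{\widehat{\boxtimes}}b_n$ of Observation~\ref{cellobs}. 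Each such morphism is a monomorphism (a Leibniz product of monomorphisms in the presheaf topos $\Cyl(A,B)$ is again one), and the class of monomorphisms of $\Cyl(A,B)$ is closed under pushout, coproduct and transfinite composition, so every morphism in the cellular closure of $\{b_m\mathbin{\widehat{\boxtimes}}b_n\}$ is a monomorphism; conversely, by Observation~\ref{cellobs}, every monomorphism of $\Cyl(A,B)$ lies in this cellular closure. Hence the Reedy cofibrations are precisely the monomorphisms, which completes the identification.

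The only step that does genuine work --- and hence the point I would treat most carefully --- is the identification of the abstract Reedy data with the ``division'' functors: that the matching map of $X$ at $([m],\alpha)$ is $b_m\backslash X$, and, dually, that the generating Reedy cofibrations are the $b_m\mathbin{\widehat{\boxtimes}}b_n$. Both rest on the single fact that, for the Eilenberg--Zilber Reedy category $\Delta/A$ (and $\Delta/A\times\Delta/B$), the latching object of a representable weight is its boundary subobject, which follows from the cofinality of the codimension-one faces among the proper faces of $\Delta[m]$. Observation~\ref{cellobs} already records the cofibration half of this; the matching-object computation is the remaining point. Everything else is a matter of invoking the standard Reedy machinery and unwinding the definitions of \S\S\ref{psh}--\ref{divsec}.
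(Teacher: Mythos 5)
Your proof is correct and follows essentially the same route as the paper's: transport the Reedy model structure on $[(\Delta/A)^{\mathrm{op}},\sSet/B]$ (formed from the covariant model structure on $\sSet/B$) along the equivalence (\ref{funeq}), read off the weak equivalences objectwise, and identify the cofibrations and fibrant objects via the latching/matching data of the Eilenberg--Zilber Reedy structure and Observation \ref{cellobs} — where the paper simply cites \cite[Theorem 4.18, Corollary 6.7]{MR3217884}, you spell out the computation that the latching object of a representable weight is its boundary, which is exactly the content being invoked. The one phrase to sharpen is ``a fibration in $\sSet/B$, i.e.\ a left fibration'': covariant fibrations coincide with left fibrations only between fibrant objects, so the identification of Reedy fibrant cylinders with vertically Reedy left fibrant ones needs the remark (Observation \ref{reedyrecast}) that all the objects $M\backslash X$ of a vertically Reedy left fibrant $X$ are themselves left fibrant, which is how the paper closes this small gap as well.
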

\begin{proof}
By for instance  \cite[Theorem 4.18]{MR3217884}, the Reedy structure on $\Delta/A$ (inherited from the standard Reedy structure on $\Delta$) and the covariant model structure on $\sSet/B$ (see Recollection \ref{covrecall}) cooperate to endow the functor category $[(\Delta/A)^\mathrm{op},\sSet/B]$ with a Reedy model structure, which transports along the equivalence (\ref{funeq}) to a model structure on $\Cyl(A,B)$. By construction, the weak equivalences of this model structure are  the vertical covariant equivalences;  by \cite[Corollary 6.7]{MR3217884} and Observation \ref{cellobs}, the cofibrations are the monomorphisms. 

By construction, an object $X \in \Cyl(A,B)$ is fibrant in this model structure if and only if the morphism 
\begin{equation*}
b_m\backslash X \colon (\Delta[m],\alpha)\backslash X \lra (\partial\Delta[m],\partial\alpha) \backslash X
\end{equation*}
 is a fibration between fibrant objects in the covariant model structure on $\sSet/B$ for every $([m],\alpha) \in \Delta/A$. 
 Hence, by Observation \ref{reedyrecast} and Recollection \ref{covrecall}, an object  of $\Cyl(A,B)$ is fibrant in this model structure if and only if it is vertically Reedy left fibrant.
\end{proof}

We call the model structure of Proposition \ref{reedyprop} the \emph{vertical Reedy covariant model structure} on $\Cyl(A,B)$.

The following theorem is phrased in terms of the theory of Bousfield localisations of model categories (see e.g.\ \cite[Appendix E]{joyalbarcelona}), from which we now recall a few basic definitions and results.

\begin{definition}[Bousfield localisation] \label{bousdef}
Let $\mathsf{M}$ and $\mathsf{N}$ be model structures on a category $\mathcal{C}$. The model structure $\mathsf{N}$ is  a \emph{Bousfield localisation} of the model structure $\mathsf{M}$ if $\mathsf{N}$ has the same class of cofibrations as $\mathsf{M}$, and if every fibrant object of $\mathsf{N}$ is fibrant in $\mathsf{M}$.
\end{definition}

\begin{definition}[local objects]
Let $\mathsf{M}$ and $\mathsf{N}$ be model structures on a category $\mathcal{C}$, and suppose that $\mathsf{N}$ is a Bousfield localisation of $\mathsf{M}$. 
An object of $\mathcal{C}$ is  \emph{local} (with respect to this Bousfield localisation) if it is weakly equivalent in $\mathsf{M}$ to a fibrant object of $\mathsf{N}$.
\end{definition}

\begin{lemma} \label{bouslem}
Let $\mathsf{M}$ and $\mathsf{N}$ be model structures on a category $\mathcal{C}$, and suppose that $\mathsf{N}$ is a Bousfield localisation of $\mathsf{M}$. Every weak equivalence in $\mathsf{M}$ is a weak equivalence in $\mathsf{N}$. Conversely, any weak equivalence in $\mathsf{N}$ between local objects is a weak equivalence in $\mathsf{M}$.
\end{lemma}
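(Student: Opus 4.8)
The plan is to record the short, standard arguments (cf.\ \cite[Appendix E]{joyalbarcelona}). Note first that, since $\mathsf{M}$ and $\mathsf{N}$ have the same cofibrations, they have the same trivial fibrations (namely the maps with the right lifting property against all cofibrations); in particular a trivial fibration in either model structure is a weak equivalence in the other.

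For the first assertion the crux is that every $\mathsf{M}$-trivial cofibration is an $\mathsf{N}$-weak equivalence — equivalently, that the identity functor $\mathcal{C} \lra \mathcal{C}$ is left Quillen from $\mathsf{M}$ to $\mathsf{N}$, equivalently (via the weak factorisation system (trivial cofibrations, fibrations)) that every $\mathsf{N}$-fibration is an $\mathsf{M}$-fibration. Granting this, an arbitrary $\mathsf{M}$-weak equivalence factors in $\mathsf{M}$ as an $\mathsf{M}$-trivial cofibration followed by an $\mathsf{M}$-fibration which, by two-out-of-three, is an $\mathsf{M}$-trivial fibration; both factors are then $\mathsf{N}$-weak equivalences, and hence so is their composite. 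This reduction to trivial cofibrations is precisely where the hypothesis that $\mathsf{N}$-fibrant objects are $\mathsf{M}$-fibrant is used, and I expect verifying it to be the main obstacle; I would invoke \cite[Appendix E]{joyalbarcelona} for it (in every application in this paper the model structures in play are cofibrantly generated, in which case it suffices to check it on a set of generating trivial cofibrations).

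The converse assertion I would prove in two steps. \emph{Step 1: an $\mathsf{N}$-weak equivalence between $\mathsf{N}$-fibrant objects is an $\mathsf{M}$-weak equivalence.} Choosing cofibrant replacements — these are trivial fibrations, hence common to $\mathsf{M}$ and $\mathsf{N}$, and their domains, being fibrant over fibrant objects, are fibrant and cofibrant in both model structures — and using two-out-of-three, one reduces to a map $h$ between objects bifibrant in both $\mathsf{M}$ and $\mathsf{N}$. By Whitehead's theorem in $\mathsf{N}$, $h$ is an $\mathsf{N}$-homotopy equivalence. Now any $\mathsf{M}$-cylinder object whose collapse map is an $\mathsf{M}$-trivial fibration is also an $\mathsf{N}$-cylinder object (its collapse map being an $\mathsf{N}$-weak equivalence by the first paragraph), and left homotopies out of a cofibrant object into a fibrant object may be taken along any cylinder object; so $h$ together with an $\mathsf{N}$-homotopy inverse exhibits $h$ as an $\mathsf{M}$-homotopy equivalence, hence an $\mathsf{M}$-weak equivalence by Whitehead's theorem in $\mathsf{M}$.

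\emph{Step 2: the general case.} Fix a functorial $\mathsf{N}$-fibrant replacement $X \mapsto (\eta_X \colon X \lra R_\mathsf{N}X)$. For $X$ local, $\eta_X$ is an $\mathsf{M}$-weak equivalence: $X$ is joined by a zig-zag of $\mathsf{M}$-weak equivalences to an $\mathsf{N}$-fibrant object $X_0$; each arrow $\phi$ of the zig-zag is an $\mathsf{N}$-weak equivalence by the first assertion, so each $R_\mathsf{N}\phi$ is an $\mathsf{N}$-weak equivalence between $\mathsf{N}$-fibrant objects and hence an $\mathsf{M}$-weak equivalence by Step 1, whereupon naturality of $\eta$ and two-out-of-three propagate the property "$\eta_{(-)} \in W_\mathsf{M}$" along the zig-zag from $X_0$ (where it holds by Step 1) to $X$. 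Finally, if $f \colon X \lra Y$ is an $\mathsf{N}$-weak equivalence between local objects, then $R_\mathsf{N}f$ is an $\mathsf{N}$-weak equivalence between $\mathsf{N}$-fibrant objects, so $R_\mathsf{N}f \in W_\mathsf{M}$ by Step 1; since $\eta_Y f = (R_\mathsf{N}f)\,\eta_X$ with $\eta_X, \eta_Y \in W_\mathsf{M}$, two-out-of-three gives $f \in W_\mathsf{M}$.
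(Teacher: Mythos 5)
Your proposal is correct, and it is considerably more self-contained than the paper's proof, which consists entirely of the two citations \cite[Propositions E.1.10 and E.2.21]{joyalbarcelona}. Your argument for the converse assertion --- reduce by cofibrant replacement (via trivial fibrations, which are common to $\mathsf{M}$ and $\mathsf{N}$ since the cofibrations agree) to a map between objects bifibrant in both structures, apply Whitehead's theorem in $\mathsf{N}$, transfer the homotopies along a cylinder object common to both structures, apply Whitehead's theorem in $\mathsf{M}$, and then propagate from $\mathsf{N}$-fibrant objects to arbitrary local objects along a zig-zag using a fibrant replacement and two-out-of-three --- is precisely the standard proof of E.2.21 and is complete (the functoriality of $R_{\mathsf{N}}$ is inessential: ad hoc lifts against $\mathsf{N}$-trivial cofibrations suffice).

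The one place where your text has less content than it appears is the first assertion: you correctly reduce it to showing that every $\mathsf{M}$-trivial cofibration is an $\mathsf{N}$-weak equivalence, but you then defer exactly that step to the same appendix the paper cites, and your parenthetical about generating trivial cofibrations is a practical verification criterion rather than a derivation from the stated hypothesis on fibrant objects. The step can in fact be closed by the very device you deploy in Step 1. For $A$ cofibrant, factoring the fold map $A + A \lra A$ as a cofibration followed by a trivial fibration produces a cylinder object valid in both $\mathsf{M}$ and $\mathsf{N}$; hence for $Z$ fibrant in both structures (e.g.\ $Z$ $\mathsf{N}$-fibrant, and therefore $\mathsf{M}$-fibrant by hypothesis) the two homotopy relations on $\mathcal{C}(A,Z)$ coincide. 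Since a map between cofibrant objects is a weak equivalence if and only if it induces bijections on homotopy classes of maps into every fibrant object, an $\mathsf{M}$-weak equivalence between cofibrant objects induces such bijections for every $\mathsf{N}$-fibrant $Z$ and is therefore an $\mathsf{N}$-weak equivalence; the general case follows by cofibrant replacement and two-out-of-three. With that paragraph inserted, your proof is a complete, citation-free account of the lemma.
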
 
\begin{proof}
See e.g.\ \cite[Proposition E.1.10]{joyalbarcelona} and \cite[Proposition E.2.21]{joyalbarcelona}.
\end{proof}

We now use Proposition \ref{tfaeprop} to prove the main result of this section.

\begin{theorem} \label{bousthm}
On the category $\Cyl(A,B)$, the ambivariant model structure is a Bousfield localisation of the vertical Reedy covariant model structure. An object of $\Cyl(A,B)$ is local with respect to this Bousfield localisation if and only if it is vertically right local.
\end{theorem}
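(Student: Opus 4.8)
The plan is to verify the two defining conditions of Definition \ref{bousdef} directly for the first statement, and then to pin down the local objects by combining the equivalence (i) $\Longleftrightarrow$ (iii) of Proposition \ref{tfaeprop} with a fibrant replacement argument. For the first statement, note that both the ambivariant model structure (Theorem \ref{ambicylmod}) and the vertical Reedy covariant model structure (Proposition \ref{reedyprop}) have the monomorphisms as their cofibrations, so these two model structures share a class of cofibrations; and every ambifibrant cylinder is vertically Reedy left fibrant by the implication (i) $\Rightarrow$ (ii) of Proposition \ref{tfaeprop}, hence is a fibrant object of the vertical Reedy covariant model structure by Proposition \ref{reedyprop}. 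By Definition \ref{bousdef} this already shows that the ambivariant model structure is a Bousfield localisation of the vertical Reedy covariant model structure.

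For the characterisation of the local objects, the first point to establish is that vertical right locality is invariant under vertical covariant equivalences. Given such an equivalence $f \colon X \lra Y$ in $\Cyl(A,B)$ and an object $([m],\alpha) \in \Delta/A$ with $m \geq 1$, the bifunctoriality of left division supplies a commutative square
\begin{equation*}
\cd{
(\Delta[m],\alpha)\backslash X \ar[r]^-{i_m \backslash X} \ar[d]_-{(\Delta[m],\alpha)\backslash f} & (\Delta[0],\alpha_m)\backslash X \ar[d]^-{(\Delta[0],\alpha_m)\backslash f} \\
(\Delta[m],\alpha)\backslash Y \ar[r]_-{i_m \backslash Y} & (\Delta[0],\alpha_m)\backslash Y
}
\end{equation*}
in $\sSet/B$ whose vertical morphisms are covariant equivalences, by the definition of a vertical covariant equivalence. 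The two-out-of-three property of covariant equivalences then gives that $i_m \backslash X$ is a covariant equivalence if and only if $i_m \backslash Y$ is; since $([m],\alpha)$ was arbitrary, $X$ is vertically right local if and only if $Y$ is. Running this argument along each leg of a zigzag, I conclude that any two cylinders which are weakly equivalent in the vertical Reedy covariant model structure are either both vertically right local or both not.

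The characterisation now follows quickly. By definition, a cylinder $X$ is local precisely when it is weakly equivalent in the vertical Reedy covariant model structure to an ambifibrant cylinder. If $X$ is local, say weakly equivalent to an ambifibrant cylinder $Y$, then $Y$ is vertically right local by the implication (i) $\Rightarrow$ (iii) of Proposition \ref{tfaeprop}, hence so is $X$ by the invariance established above. Conversely, if $X$ is vertically right local, I would take a fibrant replacement $j \colon X \lra X'$ of $X$ in the vertical Reedy covariant model structure: then $j$ is a vertical covariant equivalence and $X'$ is vertically Reedy left fibrant, $X'$ is again vertically right local by the invariance statement, and therefore $X'$ is ambifibrant by the implication (iii) $\Rightarrow$ (i) of Proposition \ref{tfaeprop}. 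Thus $X$ is weakly equivalent in the vertical Reedy covariant model structure to the ambifibrant cylinder $X'$, i.e.\ $X$ is local.

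I do not expect any serious obstacle here: the real content sits in Proposition \ref{tfaeprop}, whose proof rests on Stevenson's cancellation lemma (Lemma \ref{rightcan}). The only subtleties to watch are that the invariance statement must be applied stepwise along a zigzag rather than to a single weak equivalence, and that since ``local'' is defined relative to the ambient vertical Reedy covariant model structure, the fibrant replacement in the converse argument must be taken in that model structure (where fibrant replacements exist by Proposition \ref{reedyprop}).
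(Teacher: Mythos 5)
Your proposal is correct and follows essentially the same route as the paper: the first statement via the shared class of cofibrations and the implication (i) $\Rightarrow$ (ii) of Proposition \ref{tfaeprop}, and the second via the invariance of vertical right locality under vertical covariant equivalences (the same commutative square and two-out-of-three argument) combined with the equivalence (i) $\Longleftrightarrow$ (iii) of Proposition \ref{tfaeprop} applied to a Reedy fibrant replacement. No gaps.
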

\begin{proof}
In both model structures, the cofibrations  are precisely the monomorphisms. By Proposition \ref{tfaeprop}, every ambifibrant object of $\Cyl(A,B)$ is vertically Reedy left fibrant. 
This proves the first statement. 

As indicated by the diagram below, it is immediate from the definitions and the two-out-of-three property  that an object of $\Cyl(A,B)$ is vertically right local if it is weakly equivalent in the vertical Reedy covariant model structure to a vertically right local object. 
\begin{equation*}
\cd[@C=2.5em]{
(\Delta[m],\alpha)\backslash X \ar[r]^-{i_m\backslash X} \ar@{-}[d]_-{\sim} & (\Delta[0],\alpha_m) \backslash X \ar@{-}[d]^-{\sim} \\
(\Delta[m],\alpha)\backslash Y \ar[r]_-{i_m\backslash Y} & (\Delta[0],\alpha_m) \backslash Y
}\end{equation*}
Hence, to prove the second statement, it suffices to prove that a vertically Reedy left fibrant object of $\Cyl(A,B)$ is ambifibrant if and only if it is vertically right local. This was shown in Proposition \ref{tfaeprop}.
\end{proof}

\begin{corollary} \label{bouscor}
In the category $\Cyl(A,B)$,  every vertical covariant equivalence is an ambivariant equivalence. Conversely, any ambivariant equivalence in $\Cyl(A,B)$ between vertically right local objects is a vertical covariant equivalence.
\end{corollary}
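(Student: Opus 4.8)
The plan is to obtain the corollary as a formal consequence of Theorem \ref{bousthm} together with the general facts about Bousfield localisations recorded in Lemma \ref{bouslem}. First I would set $\mathsf{M}$ to be the vertical Reedy covariant model structure on $\Cyl(A,B)$ (Proposition \ref{reedyprop}) and $\mathsf{N}$ to be the ambivariant model structure (Theorem \ref{ambicylmod}). Theorem \ref{bousthm} tells us that $\mathsf{N}$ is a Bousfield localisation of $\mathsf{M}$ and that the objects local with respect to this localisation are exactly the vertically right local cylinders.

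The only remaining bookkeeping is to match up the classes of weak equivalences: by Proposition \ref{reedyprop} the weak equivalences of $\mathsf{M}$ are precisely the vertical covariant equivalences, and by the definition made after Theorem \ref{ambicylmod} the weak equivalences of $\mathsf{N}$ are precisely the ambivariant equivalences. Feeding all of this into Lemma \ref{bouslem}, its first assertion (every weak equivalence in $\mathsf{M}$ is one in $\mathsf{N}$) becomes the first sentence of the corollary, and its second assertion (every weak equivalence in $\mathsf{N}$ between local objects is one in $\mathsf{M}$) becomes the second, once ``local object'' is read as ``vertically right local cylinder''.

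I do not expect any genuine obstacle here: the corollary is a direct translation of Lemma \ref{bouslem} through the identifications supplied by Theorem \ref{bousthm} and Proposition \ref{reedyprop}, and the proof should be essentially one line once those references are cited. The one point requiring a moment's care is simply ensuring that the terminology lines up --- in particular that the local objects of this localisation really are the vertically right local cylinders, and that the weak equivalences of the vertical Reedy covariant model structure really are the vertical covariant equivalences --- but both of these have already been established, in Theorem \ref{bousthm} and Proposition \ref{reedyprop} respectively.
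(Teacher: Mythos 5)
Your proposal is correct and matches the paper's proof exactly: the corollary is deduced by applying Lemma \ref{bouslem} to the Bousfield localisation established in Theorem \ref{bousthm}, with the identifications of weak equivalences and local objects you describe. No further comment is needed.
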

\begin{proof}
This follows from Theorem \ref{bousthm} by Lemma \ref{bouslem}.
\end{proof}

This completes the main argument of this section. We conclude this section with a few remarks which, though they will play no role in the sequel, may be of interest to the reader.

\begin{remark}[dual results] \label{dualrmk}
By Observation \ref{dualobs}, each of the results of this section has a dual result.  For example, to Proposition \ref{tfaeprop} we may add the further equivalent condition:
\begin{enumerate}[(iv)]
\item $X$ is horizontally Reedy right fibrant and horizontally left local,
\end{enumerate}
where the latter property means that the morphism 
\begin{equation*}
X/i_0 \colon X / (\Delta[n],\beta) \lra X / (\Delta[0],\beta_0),
\end{equation*}
induced by the \emph{initial} vertex inclusion $i_0 \colon \Delta[0] \lra \Delta[n]$, is a \emph{contravariant} equivalence in $\sSet/A$ for every $n \geq 1$ and $([n],\beta) \in \Delta/B$.
\end{remark}

\begin{remark}[cylinders and bisimplicial sets] \label{cylbis}
(In this remark, we let the symbol $\boxtimes$ denote the exterior product bifunctor $\boxtimes \colon \sSet \times \sSet \lra \ssSet$.)

As observed by Joyal in \cite[Chapter 7]{joyalbarcelona}, the equivalence (\ref{pivequiv}) yields an equivalence between $\Cyl(A,B)$ and the category $\ssSet/(A \boxtimes B)$ of bisimplicial sets over the exterior product $A \boxtimes B$. Under this equivalence, the ambifibrant objects of $\Cyl(A,B)$ correspond to the ambifibrations with codomain $A \boxtimes B$, where a morphism of bisimplicial sets is said to be an \emph{ambifibration} if it has the right lifting property with respect to the exterior Leibniz products $h^k_m \mathbin{\widehat{\boxtimes}} b_n$ (for $m \geq 1$, $0 < k \leq m$, $n \geq 0$) and $b_m \mathbin{\widehat{\boxtimes}} h_n^k$ (for $m \geq 0$, $n \geq 1$, $0 \leq k < n$). 

Note that the category of bisimplicial sets bears an involution $(-)^\mathrm{vop}$, induced from the involution $(-)^\mathrm{op} \times \mathrm{id}$ on $\Delta \times \Delta$, which sends a bisimplicial set $X$ to its \emph{vertical opposite} $X^\mathrm{vop}$. Note also that $(A \boxtimes B)^\mathrm{vop} = A^\mathrm{op} \boxtimes B$. This involution on $\ssSet$ interchanges the class of ambifibrations and the class of \emph{left bifibrations} defined in \cite[Definition 5.5.10]{MR3931682}. It follows that, under the composite equivalence
\begin{equation*}
\cd[@C=3em]{
\Cyl(A,B) \ar[r]_-{\simeq} & \ssSet/(A \boxtimes B) \ar[r]_-{\cong}^-{(-)^\mathrm{vop}} & \ssSet/(A^\mathrm{op} \boxtimes B),
}
\end{equation*}
the ambivariant model structure on $\Cyl(A,B)$ corresponds to the \emph{bicovariant} model structure on $\ssSet/(A^\mathrm{op} \boxtimes B)$ constructed in \cite[Theorem 5.5.13]{MR3931682}.
\end{remark}

\begin{remark}[alternative constructions of the ambivariant model structure]
The results of this section suggest a few alternative constructions of the ambivariant model structure on $\Cyl(A,B)$. To give just two such alternatives, %\footnote{There extand by now one thousand and one stories, all told, of the same.}, 
we could construct it as the Bousfield localisation of the vertical Reedy covariant model structure of Proposition \ref{reedyprop} at the set of exterior Leibniz products
\begin{equation*}
 h^k_m \mathbin{\widehat{\boxtimes}} b_n \colon (\Delta[m],\alpha) \boxtimes (\partial\Delta[n],\partial\beta) \cup (\Lambda^k[m],\Lambda^k[\alpha]) \boxtimes (\Delta[n],\beta) \lra (\Delta[m],\alpha) \boxtimes (\Delta[n],\beta)
 \end{equation*}
 for $m \geq 1$, $0 < k \leq m$, $n \geq 0$, $([m],\alpha) \in \Delta/A$, and $([n],\beta) \in \Delta/B$  (or at yet some other set of morphisms), or  by transporting the bicovariant model structure on $\ssSet/(A^\mathrm{op} \mathbin{\boxtimes} B)$ along the composite equivalence displayed in Remark \ref{cylbis}.  

Despite these alternatives, we have chosen to give the construction presented in \S\ref{secmodstrcyl} because it arrives directly at the defining properties of the ambivariant model structure which make it central to our proof of Joyal's conjecture (viz.\ it has the same cofibrations as the Joyal model structure on $\Cyl(A,B)$ and its (fibrations between) fibrant objects are precisely as described in Joyal's conjecture), because it parallels the natural construction of the Joyal model structure on $\Cyl(A,B)$, and because it uses, and has thus provided an opportunity to expose, the parametrised Joyal model structures introduced in Appendix \ref{secparajoyal}, which are of independent interest.
\end{remark}

\section{Change of base} \label{basechangesec}
The goal of this section is to carry out step (\ref{stepfour}) of the proof of Joyal's conjecture outlined in \S\ref{proofoutline}. We prove (Proposition \ref{pfqadj}) that, for each pair of morphisms of simplicial sets $u$ and $v$, the \emph{pushforward--pullback adjunction} $(u,v)_! \dashv (u,v)^*$ (see \S\ref{ppadjsec})  
is a Quillen adjunction with respect to both the Joyal and ambivariant model structures (constructed in \S\ref{secmodstrcyl}). Furthermore, we use the results of \S\ref{pshsec} to prove (Theorem \ref{pfqequiv}) that, if $u$ and $v$ are weak categorical equivalences, then the pushforward--pullback adjunction $(u,v)_! \dashv (u,v)^*$ is a Quillen equivalence with respect to the ambivariant model structures.

\subsection{The pushforward--pullback adjunction} \label{ppadjsec}
Let $u \colon A \lra A'$ and $v \colon B \lra B'$ be a pair of morphisms of simplicial sets. Recall from \cite[\S14.6]{joyalnotes} the adjunction
\begin{equation*} \label{pushforwardadj}
\xymatrix{
\Cyl(A,B) \ar@<+1.5ex>[rr]_-{\hdash}^-{(u,v)_!} && \ar@<+1.5ex>[ll]^-{(u,v)^*} \Cyl(A',B')
}
\end{equation*}
whose left adjoint sends an $(A,B)$-cylinder $X$ to the \emph{pushforward} $(A',B')$-cylinder $(u,v)_!(X)$ defined by the pushout square on the left below,
\begin{equation*}
\cd{
A + B \ar[r]^-{u + v} \ar[d] & A' + B' \ar[d] \\
X \ar[r] & \fatpushoutcorner (u,v)_!(X)
}
\qquad
\qquad
\cd{
(u,v)^*(Y) \ar[r] \ar[d] \fatpullbackcorner & Y \ar[d] \\
A \star B \ar[r]_-{u \star v} & A' \star B'
}
\end{equation*}
and whose right adjoint sends an $(A',B')$-cylinder $Y$ to the \emph{pullback} $(A,B)$-cylinder $(u,v)^*(Y)$ defined by the pullback square on the right above.

\begin{proposition} \label{pfqadj}
The pushforward--pullback adjunction
\begin{equation*}
\xymatrix{
\Cyl(A,B) \ar@<+1.5ex>[rr]_-{\hdash}^-{(u,v)_!} && \ar@<+1.5ex>[ll]^-{(u,v)^*} \Cyl(A',B')
}
\end{equation*} is a Quillen adjunction between the Joyal (resp.\ ambivariant) model structures % (resp.\ the ambivariant model structures) 
on $\Cyl(A,B)$ and $\Cyl(A',B')$. In particular, the pushforward functor $(u,v)_! \colon \Cyl(A,B) \lra \Cyl(A',B')$ preserves weak categorical equivalences and ambivariant equivalences.
\end{proposition}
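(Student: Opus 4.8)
The plan is to verify the Quillen adjunction condition on the level of cofibrations, which is where the two cases (Joyal and ambivariant) become uniform. In both model structures the cofibrations are exactly the monomorphisms in $\Cyl(A,B)$, and by the remark in \S\ref{reflsec} these are detected by the underlying simplicial maps. So the first step is to show that the pushforward functor $(u,v)_!$ preserves monomorphisms. Looking at the defining pushout square, $(u,v)_!(X)$ is obtained from $X$ by pushing out along $u + v \colon A + B \lra A' + B'$; since $A+B \lra X$ is a monomorphism, pushout-stability of monomorphisms in the presheaf category $\sSet$ gives that $A' + B' \lra (u,v)_!(X)$ is mono, and a short diagram chase (using that monos are stable under pushout and that the relevant squares are pushouts) shows that $(u,v)_!$ applied to a monomorphism $X \lra Y$ in $\Cyl(A,B)$ is again a monomorphism. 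Hence $(u,v)_!$ is a left Quillen functor on cofibrations for both model structures simultaneously.

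The second step is to check that $(u,v)_!$ sends trivial cofibrations to weak equivalences, separately in each model structure. For the Joyal model structure, the generating trivial cofibrations can be taken to be (pushouts of) inner horn inclusions together with the maps $J \lra \Delta[0]$-type anodyne extensions; by general nonsense it suffices to check that $(u,v)_!$ sends a generating set of trivial cofibrations — equivalently, by Ken Brown's lemma since every object is cofibrant, that $(u,v)_!$ preserves all weak categorical equivalences — to weak categorical equivalences. For the ambivariant model structure I would instead invoke Proposition \ref{recogprop2}: since $(u,v)_!$ is cocontinuous and sends monomorphisms to cofibrations, it suffices to show it sends every inner anodyne extension in $\Cyl(A,B)$ to an ambivariant equivalence, and since inner anodyne extensions are in particular weak categorical equivalences (hence ambivariant equivalences, as every fibration in the Joyal model structure is an inner fibration — cf. step (3) of the outline), it is enough to know $(u,v)_!$ preserves weak categorical equivalences. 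So both cases reduce to the single statement that $(u,v)_!$ preserves weak categorical equivalences in $\Cyl(A,B)$.

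The third and main step is therefore: $(u,v)_!$ preserves weak categorical equivalences. Here the key is again the defining pushout. Given a weak categorical equivalence $f \colon X \lra Y$ in $\Cyl(A,B)$, the square exhibiting $(u,v)_!(f)$ as induced by pushout has left vertical leg the monomorphism $A+B \lra X$ (and $A+B \lra Y$), and the top horizontal leg $u+v$, which — if $u$ and $v$ are weak categorical equivalences — is a weak categorical equivalence by \cite[Corollary 7.11]{joyalbarcelona} (or Proposition \ref{joyalcor}). One then applies the gluing lemma (\cite{reedy}) for the Joyal model structure to the two pushout squares defining $(u,v)_!(X)$ and $(u,v)_!(Y)$: the three vertical comparison maps $X \to Y$, $A+B \to A+B$ (an identity), and $A'+B' \to A'+B'$ (an identity) are all weak categorical equivalences, one pair of horizontal legs is a cofibration, so the induced map on pushouts $(u,v)_!(X) \lra (u,v)_!(Y)$ is a weak categorical equivalence. (For the statement of the proposition as written — with $u$ and $v$ arbitrary morphisms, not assumed to be equivalences — the gluing lemma still applies with $u+v$ in the role of a map that need not be an equivalence, but then one is only asserting the Quillen adjunction, which rests solely on steps one and two; preservation of weak equivalences in the last sentence of the proposition is the content when one additionally knows $u,v$ are equivalences, consistent with how the proposition is invoked in step (4)(a) of the outline.) The main obstacle is organising the gluing-lemma argument so that it covers the ambivariant case too; but this is handled cleanly by the reduction via Proposition \ref{recogprop2} together with Proposition \ref{reflprop}, which already packages the compatibility of the reflection $L$ with inner anodyne extensions and weak categorical equivalences, so that no separate ambivariant gluing argument is needed.
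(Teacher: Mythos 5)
Your skeleton (preserve monomorphisms; reduce preservation of trivial cofibrations to a single tractable class of maps; conclude by a gluing-lemma computation on the defining pushout squares) matches the paper's, which simply declares the proof to be a reiteration of that of Proposition \ref{reflprop}. But your handling of the ambivariant case is circular. Via Proposition \ref{recogprop2} you correctly reduce to showing that $(u,v)_!$ sends inner anodyne extensions to ambivariant equivalences, but you then argue: $(u,v)_!(f)$ is a weak categorical equivalence, hence an ambivariant equivalence. The implication ``weak categorical equivalence $\implies$ ambivariant equivalence'' is exactly the hard direction of Theorem \ref{mainthm} --- the content of Joyal's conjecture itself --- and its proof depends on the present proposition via step (\ref{stepfour})(a) of the outline; step (\ref{stepthree}) gives only the reverse implication. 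The correct argument, as in Proposition \ref{reflprop}, shows that $(u,v)_!$ preserves inner anodyne extensions \emph{as morphisms of simplicial sets}: apply \cite[Lemma 2.5]{MR3812459} to the map of spans from $X \leftarrow A+B \rightarrow A'+B'$ to $Y \leftarrow A+B \rightarrow A'+B'$ whose vertical components are $f$ and two identities. Since inner anodyne extensions in $\Cyl(A',B')$ are trivial cofibrations in the ambivariant model structure (by its construction from the parametrised Joyal model structure together with Proposition \ref{recogprinc}), this closes the ambivariant case.

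A secondary confusion: your parenthetical about whether $u$ and $v$ need to be weak categorical equivalences is both unnecessary and internally inconsistent. The gluing lemma requires the three \emph{vertical} comparison maps ($f$ and two identities) to be weak equivalences and the legs $A+B \to X$ and $A+B \to Y$ to be cofibrations; it imposes no condition on the horizontal map $u+v$. Hence $(u,v)_!$ preserves weak categorical equivalences for arbitrary $u$ and $v$, which is what the proposition asserts and what your own second step already requires in order to obtain the Quillen adjunction for the Joyal model structures.
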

\begin{proof}
It suffices to show that the left adjoint $(u,v)_!$ preserves monomorphisms, weak categorical equivalences, and (by Proposition \ref{recogprop2}) inner anodyne extensions. The proof is a reiteration of the proof of Proposition \ref{reflprop}. 
\end{proof}

We now use the results and constructions of \S\ref{pshsec} to deduce the main theorem of this section from the following theorem of Joyal.

\begin{observation} \label{pfkan}
Consider the special case of the pushforward--pullback adjunction in which $u$ is the identity morphism $1_A$.   
Under the equivalences (\ref{funeq}), the pushforward--pullback adjunction $(1_A,v)_! \dashv (1_A,v)^*$ corresponds  to the adjunction 
\begin{equation*}
\xymatrix{
[(\Delta/A)^\mathrm{op},\sSet/B] \ar@<+1.5ex>[rr]_-{\hdash}^-{[1,v_!]} && \ar@<+1.5ex>[ll]^-{[1,v^*]} [(\Delta/A)^\mathrm{op},\sSet/B'],
}
\end{equation*}
which is induced from the adjunction 
\begin{equation*}
\xymatrix{
\sSet/B \ar@<+1.5ex>[rr]_-{\hdash}^-{v_!} && \ar@<+1.5ex>[ll]^-{v^*} \sSet/B'
}
\end{equation*}
whose left and right adjoints are given by composition with and pullback along the morphism $v \colon B \lra B'$ respectively. 
Hence, by \S\ref{divsec}, there exists an isomorphism
\begin{equation} \label{nattyiso}
(\Delta[m],\alpha) \backslash (1_A,v)_!(X) \cong v_!((\Delta[m],\alpha)\backslash X)
\end{equation}
in $\sSet/{B'}$, natural in  $([m],\alpha) \in \Delta/A$ and $X \in \Cyl(A,B)$.
\end{observation}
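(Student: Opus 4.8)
The plan is to verify the asserted correspondence of adjunctions by unwinding the chain of equivalences used to build (\ref{funeq}), and then to read off (\ref{nattyiso}) from the Yoneda observation recorded in \S\ref{divsec}. Since pullbacks are transparent in presheaf categories whereas pushouts are not, I would not trace the left adjoint $(1_A,v)_!$ directly; instead I would compare the \emph{right} adjoints $(1_A,v)^*$ and $[1,v^*]$ and then invoke the essential uniqueness of left adjoints.

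Concretely, recall from \S\ref{psh} that the equivalence $\Cyl(A,B)\simeq[(\Delta/A)^{\mathrm{op}},\sSet/B]$ is obtained from $\Cyl(A,B)\simeq[(\Delta/A\times\Delta/B)^{\mathrm{op}},\Set]$ (see (\ref{pivequiv})) by currying and applying $\sSet/B\simeq[(\Delta/B)^{\mathrm{op}},\Set]$ (see (\ref{steq})) in the second variable. Under the first equivalence, a cylinder $Y$ with structure map $q$ has $(([m],\alpha),([n],\beta))$-component the set of $(m+1+n)$-simplices $\sigma$ of $Y$ with $q\sigma=\alpha\star\beta$. For $Y\in\Cyl(A,B')$, the defining pullback square for $(1_A,v)^*(Y)$ shows that an $(m+1+n)$-simplex of $(1_A,v)^*(Y)$ lying over $\alpha\star\beta$ (for $([m],\alpha)\in\Delta/A$, $([n],\beta)\in\Delta/B$) is the same thing as an $(m+1+n)$-simplex of $Y$ lying over $(1_A\star v)(\alpha\star\beta)=\alpha\star v\beta$; that is, the presheaf on $\Delta/A\times\Delta/B$ corresponding to $(1_A,v)^*(Y)$ is the restriction of the presheaf corresponding to $Y$ along the functor $\mathrm{id}\times(\Delta/v)\colon\Delta/A\times\Delta/B\to\Delta/A\times\Delta/B'$, where $\Delta/v$ sends $([n],\beta)$ to $([n],v\beta)$. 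Currying, this restriction becomes postcomposition with $(\Delta/v)^*\colon[(\Delta/B')^{\mathrm{op}},\Set]\to[(\Delta/B)^{\mathrm{op}},\Set]$, and the identical (and easier) computation in one fewer variable identifies $(\Delta/v)^*$ with the pullback functor $v^*\colon\sSet/B'\to\sSet/B$ under (\ref{steq}). Hence $(1_A,v)^*$ corresponds to $[1,v^*]$ over the equivalences; by uniqueness of adjoints $(1_A,v)_!$ then corresponds to $[1,v_!]$; and $v_!$, being left adjoint to pullback along $v$, is composition with $v$, as claimed.

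Finally, (\ref{nattyiso}) is immediate from what has been established: by the Yoneda observation of \S\ref{divsec}, the functor $(\Delta/A)^{\mathrm{op}}\to\sSet/B$ corresponding under (\ref{funeq}) to a cylinder $Z$ sends $([m],\alpha)$ to $(\Delta[m],\alpha)\backslash Z$; applying this to $Z=(1_A,v)_!(X)$ and using the correspondence $(1_A,v)_!\simeq[1,v_!]$ yields a natural isomorphism $(\Delta[m],\alpha)\backslash(1_A,v)_!(X)\cong v_!\big((\Delta[m],\alpha)\backslash X\big)$ in $\sSet/B'$, natural in $([m],\alpha)\in\Delta/A$ and $X\in\Cyl(A,B)$. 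The only mildly delicate point in the whole argument is the bookkeeping showing that restriction along $\mathrm{id}\times(\Delta/v)$ corresponds, after currying and applying (\ref{steq}), to postcomposition with $v^*$; this is a routine unwinding of the definitions of the equivalences involved and presents no real obstacle.
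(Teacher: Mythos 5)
Your proof is correct and takes essentially the same approach as the paper: the paper simply asserts the correspondence of adjunctions and then derives (\ref{nattyiso}) from the Yoneda observation of \S\ref{divsec}, exactly as you do. Your way of filling in the unproved assertion --- comparing the right adjoints, where the pullback is transparent at the level of presheaves on $\Delta/A\times\Delta/B$, and then invoking uniqueness of left adjoints --- is the natural one, and your bookkeeping with restriction along $\mathrm{id}\times(\Delta/v)$ is accurate.
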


\begin{theorem}[Joyal] \label{joyalthm}
If $v \colon B \lra B'$ is a weak categorical equivalence, then the adjunction
\begin{equation*}
\xymatrix{
\sSet/B \ar@<+1.5ex>[rr]_-{\hdash}^-{v_!} && \ar@<+1.5ex>[ll]^-{v^*} \sSet/{B'}
}
\end{equation*}
is a Quillen equivalence between the covariant model structures on $\sSet/B$ and $\sSet/{B'}$.
\end{theorem}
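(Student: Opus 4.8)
The plan is to deduce this from Joyal's characterisation of covariant equivalences and the behaviour of the adjunction $v_! \dashv v^*$ on fibrations and fibrant objects. First I would recall that, since $v_! \colon \sSet/B \lra \sSet/{B'}$ is composition with $v$, it preserves monomorphisms (trivially), so $v_! \dashv v^*$ is a Quillen adjunction as soon as $v^*$ preserves covariant fibrations and trivial covariant fibrations; this is standard, since pullback preserves left fibrations and preserves trivial fibrations, and a trivial covariant fibration is just a trivial fibration. Alternatively, and more in the spirit of the paper, one observes that $v_!$ preserves the generating (trivial) cofibrations of the covariant model structure — these are built from the inclusions $\Lambda^k[m] \hookrightarrow \Delta[m]$ and $\partial\Delta[m] \hookrightarrow \Delta[m]$ sitting over $B$ via some simplex, and composing with $v$ just reindexes them over $B'$ — so $v_!$ is left Quillen by the generating-data criterion.

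The substantive content is that when $v$ is a weak categorical equivalence the Quillen adjunction is a Quillen equivalence. Since every object of $\sSet/B$ is cofibrant in the covariant model structure, it suffices to show: (a) $v_!$ reflects covariant equivalences between (all) objects of $\sSet/B$, or equivalently preserves and detects them, and (b) for every left fibrant object $Y \lra B'$, the derived counit $v_!v^*Y \lra Y$ is a covariant equivalence; in fact, because every object is cofibrant and $v^*$ preserves left fibrant objects, the ordinary counit will do. For (b), $v^*Y \lra B$ is the pullback of the left fibration $Y \lra B'$ along $v$, and the counit is the top edge of the pullback square $v^*Y \to Y$ over $v \colon B \to B'$. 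For (a), I would use the known description (due to Joyal, Lurie) of covariant equivalences: a map in $\sSet/B$ is a covariant equivalence if and only if its image under the ``free left fibration'' or straightening-type functor is an equivalence, or — avoiding straightening — the characterisation via mapping into left fibrant objects. Concretely, $f$ in $\sSet/B$ is a covariant equivalence iff for every left fibration $E \to B$ the induced map on derived mapping spaces (or on $\pi_0$ of sections over $B$, fibrewise) is a bijection; pulling back along $v$ and using that $v$ is a weak categorical equivalence — hence induces an equivalence on appropriate mapping spaces and, by a cofinality/Quillen-equivalence bootstrapping, a bijection on components of left fibrations over $B'$ versus $B$ — gives the reflection statement.

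Cleaner still, and the route I would actually take: invoke the transitivity of Quillen equivalences. Factor the weak categorical equivalence $v$ through quasi-categories if needed, but more directly, reduce to the case where $v$ is a trivial cofibration (a monic weak categorical equivalence) and where $B'$ is a quasi-category, using a functorial factorisation; the general case follows by two-out-of-three for Quillen equivalences once the statement is known for these two building blocks. For $v$ inner anodyne, $v_! \dashv v^*$ is even a Quillen equivalence because $v$ is then a covariant equivalence of the terminal-ish kind and one can appeal directly to \cite[\S8]{joyalbarcelona}; for the remaining case where $B, B'$ are quasi-categories, the covariant model structures are modelled by $\infty$-categories of functors to spaces and $v$ induces an equivalence thereof. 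The main obstacle is precisely part (a): proving that $v_!$ \emph{detects} covariant equivalences for an arbitrary (not necessarily fibrant-codomain) weak categorical equivalence $v$ between arbitrary simplicial sets, without straightening. I expect to handle this by the mapping-space criterion for covariant equivalences together with the fact that $v^* \colon \sSet/{B'} \to \sSet/B$ restricted to left fibrations is an equivalence of $\infty$-categories — which itself is a consequence of $v$ being a weak categorical equivalence via the Grothendieck construction / covariant Yoneda — so that the unit and counit of $v_! \dashv v^*$ are covariant equivalences, yielding the Quillen equivalence in one stroke.
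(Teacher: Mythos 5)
The paper does not actually prove this theorem: it imports both halves by citation, namely \cite[Theorem 10.2]{joyalbarcelona} for the Quillen adjunction and \cite[Theorem 5.2.14]{MR3931682} for the Quillen equivalence. So the relevant question is whether your sketch would stand on its own as a proof, and it would not. First, a factual error in the easy half: the covariant model structure is a Bousfield-localisation-type structure, and its trivial cofibrations are \emph{not} generated by horn inclusions (or any other explicitly named maps) sitting over $B$; the left anodyne maps over $B$ are trivial cofibrations but do not generate them all. Indeed, the statement that $v_!$ preserves covariant equivalences is precisely the content of \cite[Theorem 10.2]{joyalbarcelona} and is itself nontrivial. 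Your first route --- $v_!$ preserves cofibrations and $v^*$ preserves left fibrant objects and left fibrations between them, then apply the recognition lemma of \cite[Proposition 7.15]{MR2342834} --- is the correct one; the ``generating trivial cofibrations'' alternative is not available.

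The genuine gap is in the Quillen-equivalence half. Every substantive step there --- that the counit $v_!v^*Y \lra Y$ is a covariant equivalence for each left fibration $Y \lra B'$, and that $v_!$ reflects covariant equivalences --- is justified only by appeal to ``the Grothendieck construction / covariant Yoneda'' or to modelling the covariant model structure by functors to spaces, i.e.\ to the straightening theorem. That is circular in the context of this paper, whose \S\ref{luriehalfsec} uses Theorem \ref{joyalthm} as an input precisely in order to reprove Lurie's characterisation of covariant equivalences \emph{without} straightening; and even out of context, those appeals restate rather than prove the claim. Your two-out-of-three reduction to the cases (i) $v$ inner anodyne and (ii) $v$ a weak categorical equivalence between quasi-categories is a sound skeleton, but neither building block is established: (i) is dismissed with ``appeal directly to \cite[\S8]{joyalbarcelona}'' and (ii) with straightening. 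These two cases are exactly where the work lies (they occupy a substantial part of \cite[\S5.2]{MR3931682}, via the fibrewise mapping-space criterion for covariant equivalences and an analysis of final maps), so the proposal as written leaves the theorem unproved.
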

\begin{proof}
See \cite[Theorem 10.2]{joyalbarcelona} for a proof that this adjunction is a Quillen adjunction, and see \cite[Theorem 5.2.14]{MR3931682} for a proof\footnote{This proof largely follows (and improves upon) Joyal's original, unpublished proof.} that it is moreover a Quillen equivalence.  
\end{proof}

\begin{lemma} \label{loclem}
For any simplicial set $A$ and any morphism of simplicial sets $v \colon B\lra B'$, the pushforward functor $(1_A,v)_! \colon \Cyl(A,B) \lra \Cyl(A,B')$ preserves vertically right local objects.
\end{lemma}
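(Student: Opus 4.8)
The statement is essentially a formal consequence of Observation \ref{pfkan} together with the (known) fact that $v_!$ preserves covariant equivalences; the plan is to make this precise.

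First I would recall that, by definition, an $(A,B')$-cylinder $Y$ is vertically right local exactly when, for every $m \geq 1$ and every $([m],\alpha) \in \Delta/A$, the morphism $i_m \backslash Y \colon (\Delta[m],\alpha)\backslash Y \lra (\Delta[0],\alpha_m)\backslash Y$ induced by the final vertex inclusion $i_m \colon \Delta[0] \lra \Delta[m]$ is a covariant equivalence in $\sSet/{B'}$. I would apply this with $Y = (1_A,v)_!(X)$, noting that $i_m$ is a morphism $([0],\alpha_m) \lra ([m],\alpha)$ in the category $\Delta/A$, so that $i_m \backslash (1_A,v)_!(X)$ is the image of this morphism of $\Delta/A$ under the functor $(\Delta/A)^{\mathrm{op}} \lra \sSet/{B'}$ corresponding to $(1_A,v)_!(X)$ under the equivalence (\ref{funeq}). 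By the naturality in $([m],\alpha) \in \Delta/A$ of the isomorphism (\ref{nattyiso}) of Observation \ref{pfkan}, it then follows that $i_m \backslash (1_A,v)_!(X)$ is isomorphic, in the arrow category $(\sSet/{B'})^{\mathbf{2}}$, to $v_!(i_m \backslash X)$.

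It therefore suffices to show that $v_! \colon \sSet/B \lra \sSet/{B'}$ (composition with $v$) carries covariant equivalences to covariant equivalences: granting this, since $X$ is vertically right local the morphism $i_m \backslash X$ is a covariant equivalence, hence so is $v_!(i_m \backslash X)$, hence so is the isomorphic morphism $i_m \backslash (1_A,v)_!(X)$, and thus $(1_A,v)_!(X)$ is vertically right local. To obtain this remaining claim, I would invoke the fact -- due to Joyal, see \cite[Theorem 10.2]{joyalbarcelona}, and used in the proof of Theorem \ref{joyalthm} -- that for \emph{any} morphism $v$ of simplicial sets the adjunction $v_! \dashv v^*$ is a Quillen adjunction between the covariant model structures; since every object of $\sSet/B$ is cofibrant in the covariant model structure (its cofibrations being the monomorphisms, see Recollection \ref{covrecall}), Ken Brown's lemma shows that the left Quillen functor $v_!$ preserves all covariant equivalences, not merely those between cofibrant objects.

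The argument presents no genuine obstacle; it is a short formal deduction. The only points requiring a little care are the bookkeeping that identifies the morphism $i_m \backslash (1_A,v)_!(X)$ of $\sSet/{B'}$ with $v_!(i_m \backslash X)$, via the naturality clause of the isomorphism (\ref{nattyiso}), and the use of the Quillen-adjunction fact in the generality of an arbitrary morphism $v$ rather than only a weak categorical equivalence.
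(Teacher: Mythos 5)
Your proposal is correct and follows essentially the same route as the paper: identify $i_m\backslash (1_A,v)_!(X)$ with $v_!(i_m\backslash X)$ via the naturality of the isomorphism (\ref{nattyiso}) from Observation \ref{pfkan}, and then invoke \cite[Theorem 10.2]{joyalbarcelona} to see that $v_!$ preserves covariant equivalences for an arbitrary morphism $v$. Your extra remark that this preservation follows from $v_!$ being left Quillen together with the cofibrancy of all objects of $\sSet/B$ is a correct (and slightly more explicit) justification of the same citation the paper uses.
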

\begin{proof}
Let $X$ be a vertically right local object of $\Cyl(A,B)$ (see Definition \ref{vloc}). 
For each object $([m],\alpha) \in \Delta/A$, the natural isomorphism (\ref{nattyiso}) implies that the morphism 
 \begin{equation*}
i_m\backslash (1_A,v)_!(X) \colon (\Delta[m],\alpha)\backslash (1_A,v)_!(X) \lra (\Delta[0],\alpha_m)\backslash (1_A,v)_!(X)
\end{equation*}
is isomorphic in $\sSet/{B'}$ to the morphism 
\begin{equation*}
v_!(i_m\backslash X) \colon v_!((\Delta[m],\alpha)\backslash X) \lra v_!((\Delta[0],\alpha_m)\backslash X).
\end{equation*}
But this latter morphism is a covariant equivalence in $\sSet/{B'}$, since $X$ is vertically right local by assumption  and since the functor 
$v_! \colon \sSet/B \lra \sSet/B'$ preserves covariant equivalences  by \cite[Theorem 10.2]{joyalbarcelona}. Hence $(1_A,v)_!(X)$ is a vertically right local object of $\Cyl(A,B')$.
\end{proof}

\begin{theorem} \label{pfqequiv}
Suppose $u \colon A \lra A'$ and $v \colon B \lra B'$ are weak categorical equivalences. Then the pushforward--pullback adjunction
\begin{equation*}
\xymatrix{
\Cyl(A,B) \ar@<+1.5ex>[rr]_-{\hdash}^-{(u,v)_!} && \ar@<+1.5ex>[ll]^-{(u,v)^*} \Cyl(A',B')
}
\end{equation*}
is a Quillen equivalence between the ambivariant model structures on $\Cyl(A,B)$ and $\Cyl(A',B')$. In particular, the pushforward functor $(u,v)_! \colon \Cyl(A,B) \lra \Cyl(A',B')$ reflects ambivariant equivalences.
\end{theorem}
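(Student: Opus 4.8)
The plan is as follows. By Proposition \ref{pfqadj}, the adjunction $(u,v)_! \dashv (u,v)^*$ is already a Quillen adjunction between the ambivariant model structures, so it remains only to promote it to a Quillen equivalence. First I would reduce to the case $u = 1_A$. The pushforward functor factors as $(u,v)_! \cong (u,1_{B'})_! \circ (1_A,v)_!$, and by Observation \ref{dualobs} --- together with the invariance of the Joyal model structure under the involution $(-)^{\mathrm{op}}$, which makes $u^{\mathrm{op}}$ a weak categorical equivalence --- the functor $(u,1_{B'})_!$ is conjugate, via isomorphisms of ambivariant model structures, to a pushforward of the form $(1_{(B')^{\mathrm{op}}},u^{\mathrm{op}})_!$. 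Since a composite of Quillen equivalences is a Quillen equivalence, it therefore suffices to prove that $(1_A,v)_! \dashv (1_A,v)^*$ is a Quillen equivalence whenever $v \colon B \lra B'$ is a weak categorical equivalence. As every object of each ambivariant model structure is cofibrant, the standard criterion for Quillen equivalences reduces this to two claims: (a) that $(1_A,v)_!$ reflects ambivariant equivalences, and (b) that for every ambifibrant $Y \in \Cyl(A,B')$ the counit $(1_A,v)_!(1_A,v)^*(Y) \lra Y$ is an ambivariant equivalence.

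The key preliminary fact is that $(1_A,v)_!$ both preserves and reflects vertical covariant equivalences. Indeed, by Observation \ref{pfkan} and the natural isomorphism (\ref{nattyiso}), under the equivalences (\ref{funeq}) the functor $(1_A,v)_!$ is identified with postcomposition with $v_! \colon \sSet/B \lra \sSet/B'$; since $v_! \dashv v^*$ is a Quillen equivalence between the covariant model structures (Theorem \ref{joyalthm}) and every object of a covariant model structure is cofibrant, $v_!$ preserves covariant equivalences (being left Quillen) and reflects them (being the left adjoint of a Quillen equivalence), whence $(1_A,v)_!$ preserves and reflects vertical covariant equivalences.

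For claim (a), let $f \colon X \lra Y$ be a morphism of $\Cyl(A,B)$ with $(1_A,v)_!(f)$ an ambivariant equivalence. Choosing ambifibrant replacements $X \lra \widehat{X}$, $Y \lra \widehat{Y}$ and lifting produces a morphism $\widehat{f} \colon \widehat{X} \lra \widehat{Y}$ between ambifibrant --- hence, by Proposition \ref{tfaeprop}, vertically right local --- cylinders, with $f$ an ambivariant equivalence if and only if $\widehat{f}$ is. Since $(1_A,v)_!$ preserves ambivariant equivalences (Proposition \ref{pfqadj}), the two-out-of-three property forces $(1_A,v)_!(\widehat{f})$ to be an ambivariant equivalence; by Lemma \ref{loclem} its domain and codomain are vertically right local, so by Corollary \ref{bouscor} it is even a vertical covariant equivalence; and since $(1_A,v)_!$ reflects vertical covariant equivalences, $\widehat{f}$ is a vertical covariant equivalence, hence --- by Corollary \ref{bouscor} once more --- an ambivariant equivalence, and therefore so is $f$. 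This is the heart of the matter and the step I expect to be the main obstacle: ambivariant equivalences, unlike vertical covariant equivalences, are not detected levelwise in $[(\Delta/A)^{\mathrm{op}},\sSet/B]$, so one must first replace $f$ by a morphism between vertically right local cylinders, where the Bousfield localisation description of Theorem \ref{bousthm} applies; the crucial input that makes this detour work is Lemma \ref{loclem}, which keeps $(1_A,v)_!$ inside the class of local objects.

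For claim (b), let $Y \in \Cyl(A,B')$ be ambifibrant. Then $Y$ is vertically Reedy left fibrant (Proposition \ref{tfaeprop}), so each $(\Delta[m],\alpha) \backslash Y$ is left fibrant in $\sSet/B'$ (Observation \ref{reedyrecast}). By Observation \ref{pfkan} the counit $(1_A,v)_!(1_A,v)^*(Y) \lra Y$ is, levelwise over $\Delta/A$, the counit of $v_! \dashv v^*$ at these left fibrant objects; since $v_! \dashv v^*$ is a Quillen equivalence and every object is cofibrant, each such counit is a covariant equivalence, so the counit in $\Cyl(A,B')$ is a vertical covariant equivalence, hence an ambivariant equivalence by Corollary \ref{bouscor}. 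Claims (a) and (b) having been established, $(1_A,v)_! \dashv (1_A,v)^*$ is a Quillen equivalence, and with the reduction of the first paragraph this proves the theorem; the concluding assertion that $(u,v)_!$ reflects ambivariant equivalences then follows formally, since a left Quillen functor belonging to a Quillen equivalence reflects weak equivalences between cofibrant objects.
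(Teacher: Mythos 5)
Your proposal is correct and follows essentially the same route as the paper's proof: the same factorisation through $(1_A,v)_!$ with the $v=1$ case handled by duality, the same reduction to ambifibrant (hence vertically right local) objects so that Lemma \ref{loclem} and Corollary \ref{bouscor} convert the problem into one about vertical covariant equivalences, and the same use of Observation \ref{pfkan} and Theorem \ref{joyalthm} for both the reflection claim and the counit claim. The only difference is that you spell out the fibrant-replacement step that the paper compresses into ``without loss of generality''.
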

\begin{proof}
Recall that the pushforward--pullback adjunction is a Quillen adjunction between the ambivariant model structures by Proposition \ref{pfqadj}. 

Now, observe that the pushforward--pullback adjunction $(u,v)_! \dashv (u,v)^*$ is isomorphic to the following composite adjunction.
\begin{equation*}
\xymatrix{
\Cyl(A,B) \ar@<+1.5ex>[rr]_-{\hdash}^-{(1_A,v)_!} && \ar@<+1.5ex>[ll]^-{(1_A,v)^*} \Cyl(A,B') \ar@<+1.5ex>[rr]_-{\hdash}^-{(u,1_{B'})_!} && \ar@<+1.5ex>[ll]^-{(u,1_{B'})^*} \Cyl(A',B')
}
\end{equation*}
Hence it suffices to consider the case in which $u$ is an identity and the case in which $v$ is an identity. We shall prove the Quillen equivalence in the first case; the second case follows by duality (see Observation \ref{dualobs}). 

We first prove that the pushforward functor $(1_A,v)_! \colon \Cyl(A,B) \lra \Cyl(A,B')$ reflects ambivariant equivalences. Let $f \colon X \lra Y$ be a morphism in $\Cyl(A,B)$ and suppose that the morphism
\begin{equation*} \label{image}
(1_A,v)_!(f) \colon (1_A,v)_!(X) \lra (1_A,v)_!(Y)
\end{equation*} 
is an ambivariant equivalence in $\Cyl(A,B')$. Since the pushforward functor $(1_A,v)_!$ preserves ambivariant equivalences by Proposition \ref{pfqadj}, we may suppose without loss of generality (by the two-out-of-three property) that $X$ and $Y$ are ambifibrant, and hence vertically right local by Proposition \ref{tfaeprop}. Hence it follows from Lemma \ref{loclem} that $(1_A,v)_!(X)$ and $(1_A,v)_!(Y)$ are vertically right local objects of  $\Cyl(A,B')$. Corollary \ref{bouscor} then implies that the morphism  $(1_A,v)_!(f)$ 
 is a vertical covariant equivalence in $\Cyl(A,B')$, i.e.\ that the morphism 
\begin{equation*}
(\Delta[m],\alpha)\backslash (1_A,v)_!(f) \colon (\Delta[m],\alpha) \backslash (1_A,v)_!(X) \lra (\Delta[m],\alpha)\backslash (1_A,v)_!(Y)
\end{equation*}
is a covariant equivalence in $\sSet/B'$ for each $([m],\alpha) \in \Delta/A$. By the natural isomorphism (\ref{nattyiso}), this latter morphism is isomorphic in $\sSet/B'$ to the morphism
\begin{equation*}
v_!((\Delta[m],\alpha)\backslash f) \colon v_!((\Delta[m],\alpha)\backslash X) \lra v_!((\Delta[m],\alpha)\backslash Y),
\end{equation*}
which is therefore also a covariant equivalence in $\sSet/B'$. 
Since the functor $v_! \colon \sSet/B \lra \sSet/{B'}$ reflects covariant equivalences by Theorem \ref{joyalthm}, we may deduce therefore that the morphism $f \colon X \lra Y$ is a vertical covariant equivalence, and hence an ambivariant equivalence in $\Cyl(A,B)$ by Corollary \ref{bouscor}. 

It remains to prove that, for each ambifibrant object $Y \in \Cyl(A,B')$, the counit morphism 
\begin{equation*}
\varepsilon_Y \colon (1_A,v)_!(1_A,v)^*(Y) \lra Y
\end{equation*}
is an ambivariant equivalence in $\Cyl(A,B')$. Indeed, this morphism is a vertical covariant equivalence (and hence an ambivariant equivalence by Corollary \ref{bouscor}), i.e., for each object $([m],\alpha) \in \Delta/A$, the morphism
\begin{equation*}
(\Delta[m],\alpha)\backslash\varepsilon_Y \colon (\Delta[m],\alpha) \backslash (1_A,v)_!(1_A,v)^*(Y) \lra (\Delta[m],\alpha)\backslash Y
\end{equation*}
is a covariant equivalence in $\sSet/{B'}$. For by Observation \ref{pfkan}, this latter morphism is isomorphic in $\sSet/{B'}$ 
 to the counit morphism
\begin{equation*}
\cd{
v_!v^*((\Delta[m],\alpha)\backslash Y) \lra (\Delta[m],\alpha) \backslash Y
}
\end{equation*} 
of the adjunction $v_! \dashv v^*$, and is therefore a covariant equivalence by Theorem \ref{joyalthm}, since $(\Delta[m],\alpha) \backslash Y$ is a left fibrant object of $\sSet/B$  by Observation \ref{reedyrecast}, Proposition \ref{tfaeprop}, and the assumption that $Y$ is ambifibrant. 
\end{proof}

\section{Joyal's cylinder conjecture} \label{joyalconjsec}
In this section, we use the  preceding results to complete the proof of Joyal's conjecture outlined in \S\ref{proofoutline}. 
We begin with a recollection from  \cite[Lemmas 3.7 and 3.8]{stevbifib} of the proof of the easy case of Joyal's conjecture. 

\begin{lemma} \label{easycase}
Suppose $A$ and $B$ are quasi-categories. Then, in the Joyal model structure on $\Cyl(A,B)$, an object $X$ is fibrant if and only if the canonical morphism $X \lra A \star B$ is an inner fibration, and a morphism between fibrant objects is a fibration if and only if it is an inner fibration.
\end{lemma}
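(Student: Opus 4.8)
The plan is to reduce the lemma to the classical description of the categorical (i.e.\ Joyal) fibrations between quasi-categories. Recall that, by the construction of the Joyal model structure on $\Cyl(A,B)$ (Theorem \ref{joyalcylmod}), a morphism of $(A,B)$-cylinders is a fibration if and only if its underlying morphism of simplicial sets is a categorical fibration, and a cylinder $X \in \Cyl(A,B)$ is fibrant if and only if the canonical morphism $X \to A \star B$ is a categorical fibration. Since every categorical fibration is in particular an inner fibration, the ``only if'' halves of both assertions are immediate, so what remains is to show that, when $A$ and $B$ are quasi-categories, every inner fibration of the relevant kind is a categorical fibration. First I would note that $A \star B$ is a quasi-category, being the join of two quasi-categories (see e.g.\ \cite[\S 3]{MR1935979}); consequently the domain of any inner fibration with codomain $A \star B$ is again a quasi-category, as are the domain and codomain of any morphism between fibrant objects of $\Cyl(A,B)$. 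I would then invoke Joyal's characterisation (see e.g.\ \cite{joyalnotes}) of the categorical fibrations between quasi-categories as those inner fibrations which additionally have the right lifting property with respect to the inclusion $\{0\} \hookrightarrow J$ of the initial vertex into the nerve $J$ of the free-living isomorphism. Both halves of the lemma are thereby reduced to solving a lifting problem of this single shape.

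The crux, and the only part that carries any content, is the observation that \emph{every invertible $1$-simplex of $A \star B$ lies in the subcomplex $A$ or in the subcomplex $B$}, so that every morphism $J \to A \star B$ factors through one of the two inclusions $A \hookrightarrow A \star B$ or $B \hookrightarrow A \star B$. To see this one examines the structure map $A \star B \to \Delta[1]$: a $1$-simplex with both endpoints lying over $0$ (resp.\ over $1$) is forced to lie in the fibre $A$ (resp.\ $B$), whereas a $1$-simplex with source over $0$ and target over $1$ cannot be invertible, since $A \star B$ has no $1$-simplices running from a vertex of $B$ to a vertex of $A$ and so the relevant hom-set of $\ho(A \star B)$ is empty. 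As $J$ is connected, and as any simplex of $A \star B$ all of whose vertices lie in $A$ (resp.\ in $B$) itself lies in $A$ (resp.\ in $B$) -- such a simplex maps to a degenerate simplex of $\Delta[1]$ -- it follows that a morphism $J \to A \star B$ lands entirely within $A$ or entirely within $B$.

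Granting this, I would finish as follows. For the statement about objects, let $p \colon X \to A \star B$ be an inner fibration underlying an $(A,B)$-cylinder, and consider a lifting problem given by a vertex $x$ of $X$ and a morphism $u \colon J \to A \star B$ with $u(0) = p(x)$; by the crux $u$ factors through, say, $A \hookrightarrow A \star B$, the other case being identical. Because $X$ is a cylinder, the pullback square expressing the cylinder condition (see \S\ref{reflsec}) identifies the canonical inclusion $A \hookrightarrow X$ with the inclusion into $X$ of the subcomplex of simplices mapping into $A$; since $p(x)$ is a vertex of $A$, this forces $x$ to lie in the image of $A \hookrightarrow X$, and the composite of $u$ (corestricted through $A$) with $A \hookrightarrow X$ is then the desired lift. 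Hence $p$ is a categorical fibration and $X$ is fibrant. For the statement about morphisms, let $f \colon X \to Y$ be an inner fibration between fibrant objects of $\Cyl(A,B)$, and consider a lifting problem given by a vertex $x$ of $X$ and $u \colon J \to Y$ with $u(0) = f(x)$. Composing $u$ with the structure map $Y \to A \star B$ and applying the crux together with the cylinder condition for $Y$, I would factor $u$ through $A \hookrightarrow Y$ (say); the cylinder condition for $X$ then forces $x$ into the image of $A \hookrightarrow X$; and since $f$, being a morphism of cylinders, commutes with the canonical inclusions of $A$, the composite of $u$ (corestricted through $A$) with $A \hookrightarrow X$ is the desired lift. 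Thus $f$ is a categorical fibration, hence a fibration in the Joyal model structure on $\Cyl(A,B)$. Apart from the crux, everything is routine bookkeeping with the defining pullback squares of cylinders; the point is simply that an isomorphism of $A \star B$ never crosses between the two halves, which is precisely what makes the required isomorphism-lifting property hold automatically.
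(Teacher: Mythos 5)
Your proof is correct and rests on the same key observation as the paper's: an invertible edge of $A \star B$ (equivalently, any map out of $J$) never crosses between the two halves, and the cylinder pullback condition makes isomorphism-lifting over each half trivial. The paper phrases this at the level of homotopy categories (showing $\ho(p)$ is a discrete isofibration, and deducing the morphism case by cancellation of discrete isofibrations) rather than via the lifting property against $\{0\} \hookrightarrow J$ directly, but these are equivalent formulations of the same argument.
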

\begin{proof}
Since every fibration in the Joyal model structure is in particular an inner fibration, it remains to prove the sufficiency of each condition.

Let $X \in \Cyl(A,B)$, and suppose that the canonical morphism $p \colon X \lra A \star B$  is an inner fibration. Since $p$ is an  inner fibration between quasi-categories (by \cite[Corollary 3.23]{joyalbarcelona}), to prove that $p$ is a fibration, it suffices to prove that the induced functor between homotopy categories $\ho(p) \colon \ho(X) \lra \ho(A \star B)$ is an isofibration. In fact, this functor is a \emph{discrete} isofibration (i.e.\ has the \emph{unique} isomorphism lifting property), since any isomorphism in the category $\ho(A \star B)$ must belong to either one of the full subcategories $\ho(A)$ or $\ho(B)$ of $\ho(A \star B)$, and the pullback of $\ho(p)$ along either full subcategory inclusion $\ho(A)    \lra \ho(A \star B)$ or $\ho(B) \lra \ho(A \star B)$ is an isomorphism  (since $X \in \Cyl(A,B)$). Hence $X$ is a fibrant object in the Joyal model structure on $\Cyl(A,B)$.

Now, let $f \colon X \lra Y$ be a morphism between fibrant objects in the Joyal model structure on $\Cyl(A,B)$, and suppose that $f$ is an inner fibration. Once again, it suffices to prove that the functor $\ho(f)$ is an isofibration. Let $q \colon Y \lra A \star B$ denote the canonical morphism from $Y$ to the terminal object of $\Cyl(A,B)$. By the previous paragraph, the functors $\ho(q)$ and $\ho(qf) = \ho(q)\circ\ho(f)$ are discrete isofibrations, whence the functor $\ho(f)$ is also a discrete isofibration. Hence $f$ is a fibration in the Joyal model structure on $\Cyl(A,B)$.
\end{proof}

\begin{remark} \label{ifonly}
Since every morphism in $\Cyl(A,B)$ is bijective on $0$-simplices, every  trivial cofibration in the Joyal model structure on $\Cyl(A,B)$ is a monic, bijective-on-$0$-simplices, weak categorical equivalence. 
Hence one can alternatively prove Lemma \ref{easycase} by using Joyal's result that any inner fibration between quasi-categories has the right lifting property with respect to all monic, bijective-on-$0$-simplices, weak categorical equivalences (cf.\ \cite[Lemma 2.19]{stevbifib}). 
\end{remark}

\begin{corollary} \label{easycor}
Suppose $A$ and $B$ are quasi-categories. Then, on the category $\Cyl(A,B)$, the Joyal model structure and the ambivariant model structure coincide. In particular, a morphism in $\Cyl(A,B)$ is a weak categorical equivalence if and only if it is an ambivariant equivalence.
\end{corollary}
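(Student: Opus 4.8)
The goal is Corollary~\ref{easycor}: when $A$ and $B$ are quasi-categories, the Joyal and ambivariant model structures on $\Cyl(A,B)$ coincide. The plan is to exploit the fact that two model structures on the same underlying category agree as soon as they share a class of cofibrations and a class of fibrant objects, together with agreement of the fibrations between fibrant objects. Both structures have already been pinned down: by Theorem~\ref{joyalcylmod}, the Joyal model structure has the monomorphisms as cofibrations and (by the remark following that theorem) every object cofibrant, while by Theorem~\ref{ambicylmod} the ambivariant model structure also has the monomorphisms as its cofibrations. So the cofibrations already coincide, and it remains only to match up the fibrant objects and the fibrations between them.

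First I would invoke Lemma~\ref{easycase}, which is exactly the special case of Joyal's conjecture for quasi-categories: it says that in the Joyal model structure on $\Cyl(A,B)$, an object $X$ is fibrant if and only if $X \lra A\star B$ is an inner fibration, and a morphism between fibrant objects is a Joyal fibration if and only if it is an inner fibration. On the other hand, Theorem~\ref{ambicylmod} characterises the ambivariant model structure by precisely the same conditions: its fibrant objects are the cylinders $X$ with $X\lra A\star B$ an inner fibration, and a morphism between ambifibrant objects is an ambivariant fibration if and only if it is an inner fibration. Hence, under the standing hypothesis that $A$ and $B$ are quasi-categories, the two model structures have the same cofibrations, the same fibrant objects, and the same fibrations between fibrant objects.

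Then I would conclude by the standard uniqueness principle for model structures: a model structure is determined by its cofibrations together with its fibrant objects and the fibrations between fibrant objects (equivalently, by cofibrations and fibrant objects, since the weak equivalences between arbitrary objects are forced — a morphism is a weak equivalence iff it becomes one after applying a functorial fibrant replacement, which is detected on the fibrant objects). Since these data agree, the Joyal and ambivariant model structures on $\Cyl(A,B)$ coincide. In particular their weak equivalences coincide, so a morphism in $\Cyl(A,B)$ is a weak categorical equivalence if and only if it is an ambivariant equivalence, giving the ``in particular'' clause.

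The proof is essentially bookkeeping, so there is no serious obstacle; the only point requiring a little care is the appeal to the uniqueness principle for model structures. One should either cite a reference for ``a model structure is determined by its cofibrations and fibrant objects'' or argue it directly: given the common cofibrations, the trivial fibrations coincide (they are characterised by right lifting against cofibrations), and then the fibrations — being the maps with the right lifting property against the trivial cofibrations, where a trivial cofibration is a cofibration whose pushout along any map stays inside the class detected by the fibrant objects — are also forced; finally the weak equivalences are recovered as composites of trivial cofibrations with trivial fibrations (or via fibrant replacement). Since the argument never uses more than the already-established Theorem~\ref{joyalcylmod}, Theorem~\ref{ambicylmod}, and Lemma~\ref{easycase}, it is short.
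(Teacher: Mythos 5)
Your proposal is correct and follows essentially the same route as the paper: both observe that the cofibrations are the monomorphisms in each structure, use Lemma \ref{easycase} together with Theorem \ref{ambicylmod} to identify the fibrant objects, and then invoke the standard uniqueness principle that a model structure is determined by its cofibrations and fibrant objects (the paper cites \cite[Proposition E.1.10]{joyalbarcelona} for this, which is the reference you were looking for). The extra matching of fibrations between fibrant objects is harmless but not needed once that principle is in hand.
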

\begin{proof}
The Joyal and ambivariant model structures have the same cofibrations (namely the monomorphisms) and, by Lemma \ref{easycase}, have the same fibrant objects. Hence the two model structures coincide by \cite[Proposition E.1.10]{joyalbarcelona}.
\end{proof}

We now use the results of the preceding sections to deduce the general case of Joyal's conjecture from this special case.

\begin{theorem} \label{mainthm}
Let $A$ and $B$ be a pair of simplicial sets. On the category $\Cyl(A,B)$, the Joyal model structure and the ambivariant model structure coincide.
\end{theorem}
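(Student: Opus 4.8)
The plan is to follow the outline in \S\ref{proofoutline} precisely, assembling the pieces already proven in \S\S\ref{secmodstrcyl}--\ref{basechangesec}. Both model structures on $\Cyl(A,B)$ have the same cofibrations (the monomorphisms), so by \cite[Proposition E.1.10]{joyalbarcelona} it suffices to show they have the same weak equivalences, and in fact it suffices to show that every weak categorical equivalence in $\Cyl(A,B)$ is an ambivariant equivalence, since the converse is immediate: every fibration in the ambivariant model structure is an inner fibration (Theorem \ref{ambicylmod}), hence a fibration in the Joyal model structure need not be --- wait, the correct direction is that every ambivariant fibration between fibrant objects is an inner fibration while every Joyal fibration \emph{is} an inner fibration; more directly, since every Joyal fibration is in particular an inner fibration, the identity functor $(\Cyl(A,B),\text{ambivariant}) \to (\Cyl(A,B),\text{Joyal})$ is left Quillen, so it preserves trivial cofibrations, hence every ambivariant equivalence is a weak categorical equivalence (this is step (\ref{stepthree})).

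For the remaining direction, I would first choose, using the small object argument or any fibrant replacement in the Joyal model structure on $\sSet$, weak categorical equivalences $u \colon A \lra A'$ and $v \colon B \lra B'$ with $A'$ and $B'$ quasi-categories. Then I would invoke the pushforward--pullback adjunction $(u,v)_! \dashv (u,v)^*$ of \S\ref{ppadjsec}. Given any weak categorical equivalence $f$ in $\Cyl(A,B)$, Proposition \ref{pfqadj} shows $(u,v)_!(f)$ is a weak categorical equivalence in $\Cyl(A',B')$. Since $A'$ and $B'$ are quasi-categories, Corollary \ref{easycor} identifies the Joyal and ambivariant model structures on $\Cyl(A',B')$, so $(u,v)_!(f)$ is an ambivariant equivalence. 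Finally, Theorem \ref{pfqequiv} tells us $(u,v)_!$ reflects ambivariant equivalences (being the left adjoint of a Quillen equivalence between the ambivariant model structures), so $f$ is an ambivariant equivalence in $\Cyl(A,B)$. This chain of implications is exactly step (\ref{stepfive}).

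The proof of the theorem itself is therefore short and essentially formal --- the real work has all been done beforehand. The main obstacle, already surmounted in the excerpt, is Theorem \ref{pfqequiv}, specifically that $(u,v)_!$ \emph{reflects} ambivariant equivalences; this is where the presheaf-theoretic analysis of \S\ref{pshsec} (the identification of the ambivariant model structure as a Bousfield localisation of the vertical Reedy covariant model structure, via Theorem \ref{bousthm} and Corollary \ref{bouscor}) and Joyal's Quillen equivalence for covariant model structures (Theorem \ref{joyalthm}) are both essential. In writing up the final theorem I would simply reduce to the quasi-category case along $u$ and $v$ and cite Proposition \ref{pfqadj}, Corollary \ref{easycor}, and Theorem \ref{pfqequiv} in sequence, concluding with \cite[Proposition E.1.10]{joyalbarcelona} to upgrade the coincidence of weak equivalences and cofibrations to a coincidence of model structures.
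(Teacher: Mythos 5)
Your proposal is correct and follows the paper's own proof essentially verbatim: reduce to showing the two classes of weak equivalences agree, get the easy inclusion from the fact that every Joyal fibration is an inner fibration (so the Joyal model structure is a Bousfield localisation of the ambivariant one, which is the cleaner way to justify your ``identity functor is left Quillen'' remark --- cf.\ Lemma \ref{bouslem}), and deduce the converse by pushing forward along weak categorical equivalences $u,v$ to quasi-categories and citing Proposition \ref{pfqadj}, Corollary \ref{easycor}, and Theorem \ref{pfqequiv} in sequence. The momentary confusion of directions in your first paragraph is self-corrected and does not affect the argument.
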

\begin{proof}
Since the two model structures have the same cofibrations (namely the monomorphisms), it suffices to show that they have the same weak equivalences, i.e., that a morphism in $\Cyl(A,B)$ is a weak categorical equivalence if and only if it is an ambivariant equivalence. Since every fibration in the Joyal model structure is in particular an inner fibration, it follows by \cite[Proposition E.1.10]{joyalbarcelona} that every ambivariant equivalence in $\Cyl(A,B)$ is a weak categorical equivalence. It remains to prove the converse.

Let $f$ be a weak categorical equivalence in $\Cyl(A,B)$. Let $u \colon A \lra A'$ and $v \colon B \lra B'$ be weak categorical equivalences in $\sSet$ such that $A'$ and $B'$ are quasi-categories (as may be constructed by the small object argument). By Proposition \ref{pfqadj}, the morphism $(u,v)_!(f)$  in $\Cyl(A',B')$ is a weak categorical equivalence. Then, since $A'$ and $B'$ are quasi-categories, Corollary \ref{easycor} implies that $(u,v)_!(f)$ is an ambivariant equivalence in $\Cyl(A',B')$. Finally, since $u$ and $v$ are weak categorical equivalences, Theorem \ref{pfqequiv} implies that $f$ is an ambivariant equivalence in $\Cyl(A,B)$.
\end{proof}

Therefore, for each pair of simplicial sets $A$ and $B$,  everything we have proved about the ambivariant model structure on $\Cyl(A,B)$ is true also of the Joyal model structure on $\Cyl(A,B)$, for the two are one. In particular, we may deduce the following corollary.

\begin{theorem}[Joyal's cylinder conjecture] \label{conjthm}
Let $A$ and $B$ be a pair of simplicial sets. In the Joyal model structure on $\Cyl(A,B)$, an object $X$ is fibrant if and only if the canonical morphism $X \lra A \star B$ is an inner fibration, and a morphism between fibrant objects is a fibration if and only if it is an inner fibration.
\end{theorem}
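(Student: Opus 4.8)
The plan is to deduce Theorem~\ref{conjthm} as an immediate formal consequence of Theorem~\ref{mainthm} together with Theorem~\ref{ambicylmod}. Since Theorem~\ref{mainthm} asserts that the Joyal model structure and the ambivariant model structure on $\Cyl(A,B)$ literally coincide, every statement about one is a statement about the other. Thus the fibrant objects in the Joyal model structure are exactly the fibrant objects in the ambivariant model structure, and likewise the fibrations between fibrant objects agree.

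Concretely, I would argue as follows. By Theorem~\ref{mainthm}, an object $X \in \Cyl(A,B)$ is fibrant in the Joyal model structure if and only if it is fibrant in the ambivariant model structure, i.e.\ (by Theorem~\ref{ambicylmod}) if and only if the canonical morphism $X \lra A \star B$ is an inner fibration; this proves the first assertion. For the second assertion, again by Theorem~\ref{mainthm}, a morphism between fibrant objects of $\Cyl(A,B)$ is a fibration in the Joyal model structure if and only if it is a fibration in the ambivariant model structure, which by the second sentence of Theorem~\ref{ambicylmod} holds if and only if it is an inner fibration. This completes the proof.

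Given how the paper is structured, there is no genuine obstacle remaining at this point: the substantive work has already been carried out in the construction of the ambivariant model structure (Theorem~\ref{ambicylmod}), in identifying its fibrant objects and fibrations via the parametrised Joyal model structure (Appendix~\ref{secparajoyal}), and above all in Theorem~\ref{mainthm}, whose proof rests on the change-of-base results of \S\ref{basechangesec} (Proposition~\ref{pfqadj} and Theorem~\ref{pfqequiv}) and on the presheaf-theoretic analysis of \S\ref{pshsec} (Theorem~\ref{bousthm} and Corollary~\ref{bouscor}). If I were proving this from scratch without those tools, the hard part would be exactly what \S\S\ref{secmodstrcyl}--\ref{basechangesec} establish: showing that the ambivariant model structure exists with the stated fibrant objects, and that weak categorical equivalences between $(A,B)$-cylinders are detected after pushing forward to a pair of quasi-categories, which in turn requires the localisation description of the ambivariant equivalences in terms of vertical covariant equivalences. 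But as a corollary of Theorem~\ref{mainthm} the statement is purely formal.
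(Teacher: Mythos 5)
Your proof is correct and is essentially identical to the paper's own argument: both deduce the theorem immediately from the coincidence of the Joyal and ambivariant model structures (Theorem~\ref{mainthm}) together with the description of the ambivariant fibrant objects and fibrations between them in Theorem~\ref{ambicylmod}. Nothing further is needed.
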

\begin{proof}
Since the Joyal and ambivariant model structures on $\Cyl(A,B)$ coincide by Theorem \ref{mainthm}, they must have the same fibrant objects and the same fibrations between fibrant objects. By the description of the ambivariant model structure given in Theorem \ref{ambicylmod}, this proves the theorem.
\end{proof}

\section{Covariant equivalences} \label{luriehalfsec}
In this final section, we use our proof of Joyal's cylinder conjecture to give a new, direct proof (see Theorem \ref{luriecor}) of a characterisation of covariant equivalences due to Lurie \cite[Chapter 2]{MR2522659}, which avoids the use of the straightening theorem \cite[Theorem 2.2.1.2]{MR2522659}.

\subsection{The left cone functor}  \label{leftcone}
Let $B$ be a simplicial set. Recall from \cite[Definition 2.4.1.2]{MR2522659} the \emph{left cone functor} $C^{\triangleleft} \colon \sSet/B \lra \sSet$, which sends an object $p \colon X \lra B$ of $\sSet/B$ to its \emph{left cone}, that is, the simplicial set $C^{\triangleleft}(X,p)$ defined by the pushout square below.
\begin{equation*}
\cd{
X \ar[r]^-p \ar[d] & B  \ar[d] \\
\Delta[0] \star X \ar[r] & C^{\triangleleft}(X,p) \fatpushoutcorner
}
\end{equation*}
Observe that $C^{\triangleleft}(X,p)$ is the underlying simplicial set of a $(\Delta[0],B)$-cylinder; indeed, the left cone functor is none other than the composite
\begin{equation*}
\cd{
\sSet/B \ar[rr]^-{\Delta[0] \mathbin{\boxtimes} (-)} && \Cyl(\Delta[0],B) \ar[r] & \sSet
}
\end{equation*}
of the displayed exterior product functor (see \S\ref{extprodsec}; here $A = \Delta[0]$) and the forgetful functor.

\subsection{Lurie's characterisation of covariant equivalences}
In \cite[Chapter 2]{MR2522659}, Lurie proves that, for each simplicial set $B$, a morphism in $\sSet/B$ is a covariant equivalence if and only if it is sent by the left cone functor $C^{\triangleleft} \colon \sSet/B \lra \sSet$ to a weak categorical equivalence. 

\begin{remark} \label{htt}
While this characterisation of covariant equivalences does not appear to be stated explicitly in \cite[Chapter 2]{MR2522659}, it is nonetheless, by the following argument, a consequence of the results of that chapter. 

By \cite[Proposition 2.1.4.7]{MR2522659}, there exists a model structure on $\sSet/B$ whose cofibrations are the monomorphisms and whose weak equivalences are the morphisms in $\sSet/B$ that are sent by the composite
\begin{equation*}
\cd{
\sSet/B \ar[r]^-{C^\triangleleft} & \sSet \ar[r]^-{\mathfrak{C}} & \sSet\text{-}\Cat
}
\end{equation*}
of the left cone functor and the homotopy coherent realisation functor to Dwyer--Kan equivalences of simplicially enriched categories. 

By \cite[Corollary 2.2.3.12]{MR2522659} (proved as a corollary of the straightening theorem \cite[Theorem 2.2.1.2]{MR2522659}), the fibrant objects of this model structure on $\sSet/B$ are the left fibrations with codomain $B$. Hence, by \cite[Proposition E.1.10]{joyalbarcelona}, this model structure coincides with the covariant model structure on $\sSet/B$ (as defined and constructed in \cite[Chapter 8]{joyalbarcelona}). In particular, a morphism in $\sSet/B$ is a covariant equivalence if and only if it is sent by the composite functor displayed above to a Dwyer--Kan equivalence. 

So the characterisation follows at last from \cite[Proposition 2.2.5.8]{MR2522659} (whose proof depends on \cite[Theorem 2.4.6.1]{MR2522659} and on the straightening theorem), which states that a morphism of simplicial sets is a weak categorical equivalence if and only if it is sent by the homotopy coherent realisation functor to a Dwyer--Kan equivalence.
\end{remark}

We now use the results of the preceding sections to give a direct proof of Lurie's characterisation of covariant equivalences. We note that this proof does not depend on the straightening theorem.

\begin{theorem} \label{lastthm}
Let $B$ be a simplicial set. The adjunction
\begin{equation*}
\xymatrix{
\Cyl(\Delta[0],B) \ar@<-1.5ex>[rr]^-{\hdash}_-{\Delta[0]\backslash (-)} && \ar@<-1.5ex>[ll]_-{\Delta[0] \boxtimes (-)} \sSet/B
}
\end{equation*}
is a Quillen equivalence between the Joyal model structure on $\Cyl(\Delta[0],B)$ and the covariant model structure on $\sSet/B$.
\end{theorem}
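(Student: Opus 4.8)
By Theorem~\ref{mainthm}, the Joyal and ambivariant model structures on $\Cyl(\Delta[0],B)$ coincide, so I will work throughout with the ambivariant model structure, and in particular with its presentation (Theorem~\ref{bousthm}) as a Bousfield localisation of the vertical Reedy covariant model structure. The plan is first to identify the given adjunction explicitly. Since every simplex of $\Delta[0]$ is degenerate on the unique vertex, the category of simplices $\Delta/\Delta[0]$ is (isomorphic to) $\Delta$, and the equivalence~(\ref{funeq}) becomes $\Cyl(\Delta[0],B)\simeq[\Delta^{\mathrm{op}},\sSet/B]$, under which a cylinder $X$ corresponds to the functor $[m]\mapsto(\Delta[m])\backslash X$ (see~\S\ref{divsec}). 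As $\Delta[0]$ is the terminal object of $\sSet/\Delta[0]$, it corresponds to the terminal presheaf on $\Delta$, so the description of the exterior product in~\S\ref{extprodsec} shows that $\Delta[0]\boxtimes S$ corresponds to the \emph{constant} functor at $S\in\sSet/B$, while $\Delta[0]\backslash(-)$ is ``evaluation at $[0]$''. In particular the unit of the adjunction $\Delta[0]\boxtimes(-)\dashv\Delta[0]\backslash(-)$ (see~\S\ref{divsec}) is an isomorphism.

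Next I would verify that this is a Quillen adjunction. The left adjoint $\Delta[0]\boxtimes(-)$ preserves monomorphisms, which are the cofibrations of both the covariant model structure on $\sSet/B$ and the Joyal model structure on $\Cyl(\Delta[0],B)$. It also sends covariant equivalences in $\sSet/B$ to vertical covariant equivalences in $\Cyl(\Delta[0],B)$ — indeed, for a map $f$ in $\sSet/B$ the map $(\Delta[m])\backslash(\Delta[0]\boxtimes f)$ is just $f$ again, for every $([m],\alpha)\in\Delta/\Delta[0]$ — and every vertical covariant equivalence is an ambivariant equivalence by Corollary~\ref{bouscor}. A left adjoint that preserves cofibrations and all weak equivalences is left Quillen, so $\Delta[0]\boxtimes(-)\dashv\Delta[0]\backslash(-)$ is a Quillen adjunction between the covariant model structure on $\sSet/B$ and the Joyal ($=$ ambivariant) model structure on $\Cyl(\Delta[0],B)$.

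To upgrade this to a Quillen equivalence I would invoke the standard criterion that it suffices to show (a) the right adjoint $\Delta[0]\backslash(-)$ reflects weak equivalences between fibrant objects, and (b) the derived unit is a weak equivalence at every cofibrant object (here, at every object of $\sSet/B$, since all objects are cofibrant). For~(a): if $f\colon X\lra Y$ is a map of ambifibrant cylinders with $\Delta[0]\backslash f$ a covariant equivalence, then, $X$ and $Y$ being vertically right local by Proposition~\ref{tfaeprop}, for each $m\geq 1$ the final-vertex maps $i_m\backslash X$ and $i_m\backslash Y$ (see Definition~\ref{vloc}) are covariant equivalences, and a two-out-of-three argument in the naturality square relating $i_m\backslash X$, $i_m\backslash Y$, $\Delta[0]\backslash f$ and $(\Delta[m])\backslash f$ shows that $(\Delta[m])\backslash f$ is a covariant equivalence for every $[m]$; thus $f$ is a vertical covariant equivalence, hence an ambivariant equivalence by Corollary~\ref{bouscor}. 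For~(b): since the unit is an isomorphism, the derived unit at $S$ may be computed as $\Delta[0]\backslash(j)$, where $j\colon\Delta[0]\boxtimes S\lra R(\Delta[0]\boxtimes S)$ is an ambifibrant replacement. Now $\Delta[0]\boxtimes S$ corresponds to a constant functor, so each $i_m\backslash(\Delta[0]\boxtimes S)$ is an identity and $\Delta[0]\boxtimes S$ is already vertically right local, while $R(\Delta[0]\boxtimes S)$ is vertically right local by Proposition~\ref{tfaeprop}; hence $j$ is an ambivariant equivalence between vertically right local objects, so a vertical covariant equivalence by Corollary~\ref{bouscor}, and therefore $\Delta[0]\backslash(j)$ is a covariant equivalence, which establishes the Quillen equivalence.

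I expect the only real work to lie in the first step: recognising $\Delta[0]\boxtimes(-)\dashv\Delta[0]\backslash(-)$ as ``constant diagram $\dashv$ evaluation at $[0]$'', and in particular matching the abstract division functor $\Delta[0]\backslash(-)$ with evaluation at $[0]\in\Delta/\Delta[0]$ and the exterior product with the constant-diagram functor. Once these identifications are in place — together with the elementary observation that a constant cylinder is vertically right local — the Quillen equivalence follows formally from the Bousfield-localisation picture of~\S\ref{pshsec} and from Joyal's conjecture (Theorem~\ref{mainthm}).
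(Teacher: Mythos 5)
Your proof is correct, and it shares the paper's first step --- the identification, under the equivalence $\Cyl(\Delta[0],B)\simeq[\Delta^{\mathrm{op}},\sSet/B]$, of $\Delta[0]\boxtimes(-)\dashv\Delta[0]\backslash(-)$ with the constant-diagram/evaluation-at-$[0]$ adjunction, together with the appeal to Theorems \ref{mainthm} and \ref{bousthm} --- but it diverges at the decisive step. The paper observes that the localised Reedy model structure so obtained is precisely the ``canonical model structure'' of Rezk--Schwede--Shipley on simplicial objects in $\sSet/B$ (the localisation at the weakly constant objects) and then simply cites \cite[Theorem 3.9]{MR1833153} for the Quillen equivalence. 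You instead verify the two standard conditions for a Quillen equivalence by hand: that $\Delta[0]\backslash(-)$ reflects weak equivalences between ambifibrant objects (via the two-out-of-three argument in the naturality square for $i_m$, using that ambifibrant objects are vertically right local by Proposition \ref{tfaeprop}), and that the derived unit is a covariant equivalence (using that constant cylinders are vertically right local, so that the fibrant replacement map is a vertical covariant equivalence by Corollary \ref{bouscor}). Your route is self-contained within the machinery of \S\ref{pshsec} and avoids both the external citation and the small matching argument needed to see that ``vertically right local'' coincides with ``weakly constant'' in the sense of \cite{MR1833153}; the paper's route is shorter and places the theorem inside a known general framework. In effect you have re-proved the relevant special case of \cite[Theorem 3.9]{MR1833153}, and all the lemmas you invoke are used correctly.
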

\begin{proof}
Under the equivalence (\ref{funeq}), this adjunction corresponds to the adjunction
\begin{equation*}
\xymatrix{
[\Delta^\mathrm{op},\sSet/B] \ar@<-1.5ex>[rr]^-{\hdash}_-{\mathrm{ev}_0} && \ar@<-1.5ex>[ll]_-{\mathrm{cst}} \sSet/B
}
\end{equation*}
whose left adjoint sends an object $X$ of $\sSet/B$ to the constant simplicial object in $\sSet/B$ with value $X$, and whose right adjoint sends a simplicial object in $\sSet/B$ to its value at $[0] \in \Delta$. Moreover, by Theorems \ref{bousthm} and \ref{mainthm}, the model structure on $[\Delta^\mathrm{op},\sSet/B]$ which corresponds under this equivalence to the Joyal model structure on $\Cyl(\Delta[0],B)$ is the Bousfield localisation of the Reedy model structure (with respect to the covariant model structure on $\sSet/B$) whose local objects are the weakly constant simplicial objects. This is precisely the ``canonical model structure'' (in the sense of \cite[Theorem 3.1]{MR1833153}) on $[\Delta^\mathrm{op},\sSet/B]$ with respect to the covariant model structure on $\sSet/B$, so \cite[Theorem 3.9]{MR1833153} implies that this adjunction is a Quillen equivalence.
\end{proof}

\begin{theorem}[Lurie] \label{luriecor}
Let $B$ be a simplicial set. A morphism in $\sSet/B$ is a covariant equivalence if and only if it is sent by the left cone functor $C^{\triangleleft} \colon \sSet/B \lra \sSet$ to a weak categorical equivalence.
\end{theorem}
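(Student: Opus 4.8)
The plan is to obtain this as a quick consequence of Theorem \ref{lastthm}, using the factorisation of the left cone functor recorded in \S\ref{leftcone}.

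Recall from \S\ref{leftcone} that the left cone functor $C^{\triangleleft} \colon \sSet/B \lra \sSet$ is precisely the composite of the exterior product functor $\Delta[0] \boxtimes (-) \colon \sSet/B \lra \Cyl(\Delta[0],B)$ with the forgetful functor $\Cyl(\Delta[0],B) \lra \sSet$. Thus, for any morphism $f$ in $\sSet/B$, the morphism $C^{\triangleleft}(f)$ is the underlying morphism of simplicial sets of the morphism $\Delta[0] \boxtimes f$ of $\Cyl(\Delta[0],B)$. By the defining property of the Joyal model structure on $\Cyl(\Delta[0],B)$ (Theorem \ref{joyalcylmod}), it follows that $C^{\triangleleft}(f)$ is a weak categorical equivalence if and only if $\Delta[0] \boxtimes f$ is a weak equivalence in the Joyal model structure on $\Cyl(\Delta[0],B)$.

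It therefore remains to show that a morphism $f$ in $\sSet/B$ is a covariant equivalence if and only if $\Delta[0] \boxtimes f$ is a weak equivalence in the Joyal model structure on $\Cyl(\Delta[0],B)$. This is where Theorem \ref{lastthm} is used: the functor $\Delta[0]\boxtimes(-)$ is the left adjoint of a Quillen equivalence from the covariant model structure on $\sSet/B$ to the Joyal model structure on $\Cyl(\Delta[0],B)$. Since the cofibrations of each of these two model structures are exactly the monomorphisms, every object of $\sSet/B$ and every object of $\Cyl(\Delta[0],B)$ is cofibrant. A left Quillen functor preserves weak equivalences between cofibrant objects (Ken Brown's lemma), and the left adjoint of a Quillen equivalence moreover reflects them (because its total left derived functor is an equivalence of homotopy categories, hence reflects isomorphisms, and a morphism between cofibrant objects is a weak equivalence precisely when its image in the homotopy category is an isomorphism). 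Hence $\Delta[0]\boxtimes(-)$ both preserves and reflects weak equivalences, which yields the required equivalence and completes the proof.

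Since the substance of the argument has already been discharged in Theorems \ref{bousthm}, \ref{mainthm}, and \ref{lastthm}, no genuine obstacle remains at this point; the only step requiring any care is the invocation of the standard fact that the left adjoint of a Quillen equivalence reflects weak equivalences between cofibrant objects, and this applies here because --- the cofibrations being the monomorphisms in both model structures --- every object in sight is cofibrant.
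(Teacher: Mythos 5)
Your proposal is correct and follows essentially the same route as the paper: factor the left cone functor through $\Delta[0]\boxtimes(-) \colon \sSet/B \lra \Cyl(\Delta[0],B)$ and the forgetful functor, then invoke the Quillen equivalence of Theorem \ref{lastthm} together with the cofibrancy of all objects to conclude that $\Delta[0]\boxtimes(-)$ preserves and reflects weak equivalences. The paper states this more tersely, but the standard facts about Quillen equivalences that you spell out are exactly what it is implicitly using.
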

\begin{proof}
As observed in \S\ref{leftcone}, the left cone functor is the composite
\begin{equation*}
\cd{
\sSet/B \ar[rr]^-{\Delta[0] \boxtimes (-)} && \Cyl(\Delta[0],B) \ar[r] & \sSet
}
\end{equation*}
of the exterior product functor and the forgetful functor. Thus the result follows from Theorem \ref{lastthm}  (since every object of $\sSet/B$ is cofibrant in the covariant model structure).
\end{proof}

\begin{remark}[two definitions of covariant equivalences]
In their treatments of quasi-category theory, Joyal and Lurie give two very different definitions of covariant equivalences. Let $B$ be a simplicial set. In \cite[Definition 8.2]{joyalbarcelona}, Joyal defines a morphism $u \colon M \lra N$ in $\sSet/B$ to be a covariant equivalence if the function
\begin{equation*}
\pi_0\bHom_B(u,X) \colon \pi_0\bHom_B(N,X)  \lra \pi_0\bHom_B(M,X)
\end{equation*}
is a bijection for every left fibrant object $X$ of $\sSet/B$ (where $\bHom_B$ denotes the standard simplicial enrichment of $\sSet/B$, which is denoted by $\Fun_B$ in Recollection \ref{stanenr}). In \cite[Definition 2.1.4.5]{MR2522659}, Lurie defines a morphism in $\sSet/B$ to be a covariant equivalence if it is sent by the composite functor 
\begin{equation*}
\cd{
\sSet/B \ar[r]^-{C^\triangleleft} & \sSet \ar[r]^-{\mathfrak{C}} & \sSet\text{-}\Cat
}
\end{equation*}
to a Dwyer--Kan equivalence. 

As explained in Remark \ref{htt}, Lurie's proof of the equivalence of these two definitions uses the straightening theorem. 
Observe that this equivalence also follows directly from Theorem \ref{luriecor} and the theorem that a morphism of simplicial sets is a weak categorical equivalence if and only if it is sent by the functor $\mathfrak{C} \colon \sSet \lra \sSet\text{-}\Cat$ to a Dwyer--Kan equivalence. While Lurie's proof of the latter theorem uses the straightening theorem, alternative proofs have been given by Joyal \cite{joyalsimp} and by Dugger--Spivak \cite{MR2764043} which do not use the straightening theorem. Hence our proof of Theorem \ref{luriecor} yields a proof of the equivalence of Joyal's and Lurie's definitions of covariant equivalences which avoids the use of the straightening theorem.
\end{remark}

\appendix

\section{The parametrised Joyal model structure} \label{secparajoyal}
In this appendix, we introduce and study a new family of model structures  on the slice categories $\sSet/B$, which we call the \emph{parametrised Joyal model structures}, whose fibrant objects are the inner fibrations with codomain $B$ (see Theorem \ref{parajoyal}). We use this family of model structures to define a new class of morphisms of simplicial sets, which we call the \emph{absolute weak categorical equivalences} (see Definition \ref{acedef}), using which we prove some new results about inner anodyne extensions and inner fibrations (see \S\ref{newresults}), building on \cite{contrasample}, and give a new proof of a theorem of Stevenson (see Theorem \ref{stevthm}). 

Note that the only results of this appendix needed for the main part of this paper are  Theorem \ref{parajoyal} and Proposition \ref{recogprinc}, which are used in \S\ref{secmodstrcyl} to construct the ambivariant model structure on the category $\Cyl(A,B)$ for each pair of simplicial sets $A$ and $B$. 

\begin{remark}
The constructions and results of this section are inspired by, but not an instance of, the general theory presented in \cite[\S2.5]{MR3931682} and in the latter part of \cite[\S1.3]{MR2294028}. The existence of the parametrised Joyal model structures on the categories $\sSet/B$ is prefigured in \cite[Remark 5.1.22]{MR3931682}, where it is observed that the inner fibrations with codomain $B$ are (among the) fibrant objects in the minimal Cisinski model structure on $\sSet/B$.
\end{remark}

\subsection{Fibrewise isofibrations}
Recall that a morphism of quasi-categories $p \colon X \lra Y$ is an \emph{isofibration} if it is an inner fibration and if the induced functor between homotopy categories $\ho(p) \colon \ho(X) \lra \ho(Y)$ is an isofibration in the ordinary sense. Let $J$ denote the nerve of the contractible groupoid with two objects $0$ and $1$. It follows from Joyal's lifting theorem that an inner fibration between quasi-categories is an isofibration if and only if it has the right lifting property in $\sSet$ with respect to the end-point inclusion $\partial_0 \colon \{0\} \lra J$ (see \cite{MR1935979}). 

Furthermore, recall that a morphism between quasi-categories is a fibration in the \emph{Joyal model structure} on $\sSet$ (whose cofibrations are the monomorphisms, and whose fibrant objects are the quasi-categories) if and only if it is an isofibration (see \cite[Theorem 6.12]{joyalbarcelona}). In Theorem \ref{parajoyal} below, we shall construct, for each simplicial set $B$, a model structure on the slice category $\sSet/B$  whose cofibrations are the monomorphisms, whose fibrant objects are the inner fibrations with codomain $B$, and in which a morphism between fibrant objects is a fibration if and only if it a \emph{fibrewise isofibration} in the sense of the following definition. 

(Recall that a morphism of simplicial sets $p \colon X \lra B$ is an inner fibration if and only if the fibre $X_{\beta}$ of $p$ over each $n$-simplex $\beta$ of $B$ is a quasi-category.)

\begin{definition}[fibrewise isofibrations] \label{fibisodef}
Let $B$ be a simplicial set, and let $p \colon X \lra B$ and $q \colon Y \lra B$ be inner fibrations. A morphism $f \colon (X,p) \lra (Y,q)$ in $\sSet/B$ is a \emph{fibrewise isofibration} over $B$ if it is an inner fibration and if, for each $0$-simplex $b$ of $B$, the induced morphism between fibres $f \colon X_b \lra Y_b$ is an isofibration. 
\end{definition}

\subsection{A fundamental lemma of quasi-category theory}
We shall construct the aforementioned parametrised Joyal model structures on the slice  categories $\sSet/B$ by Cisinski's method for constructing model structures on presheaf categories presented in \cite[\S1.3]{MR2294028} and \cite[\S2.4]{MR3931682}. Our proof of this construction will use the following lemma.

\begin{lemma} \label{funlemma}
Let $j \colon S \lra T$ be a bijective-on-$0$-simplices monomorphism of simplicial sets, and suppose that $T$ is a quasi-category. Then the Leibniz product 
\begin{equation*}
\partial_0 \mathbin{\widehat{\times}} j \colon (J \times S) \cup_{\{0\} \times S} (\{0\} \times T) \lra J \times T
\end{equation*}
is an inner anodyne extension.
\end{lemma}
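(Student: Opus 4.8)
The plan is to reduce the claim to the combinatorics of the product $J \times \Delta[1]$ together with the known behaviour of inner anodyne extensions under Leibniz products with the special inclusions coming from $J$. First I would recall the elementary fact that the end-point inclusion $\{0\} \lra J$ can be built by repeatedly attaching nondegenerate simplices of $J$; concretely, $J$ is filtered by its skeleta, and each stage of this filtration attaches simplices of $J$ along boundaries that already contain the vertex $0$, so $\{0\} \lra J$ is a (transfinite) composite of pushouts of inner horn inclusions $\Lambda^k[n] \lra \Delta[n]$ with $0 < k < n$ \emph{except} for the very first stage, which attaches the nondegenerate $1$-simplices $0 \to 1$ and $1 \to 0$, i.e.\ is built from the outer/end-point inclusions $\{0\} \lra \Delta[1]$ (in both directions). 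This is the standard observation that $\{0\} \lra J$ is a \emph{mid anodyne} extension in the terminology used for the nerve of the walking isomorphism.

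Next, using the fact that Leibniz product $\mathbin{\widehat{\times}}$ is cocontinuous in each variable and sends a pushout of the first argument to a pushout, the map $\partial_0 \mathbin{\widehat{\times}} j$ decomposes as a composite of pushouts of the maps $(\text{stage}) \mathbin{\widehat{\times}} j$, where $(\text{stage})$ runs over the cells of the filtration of $\{0\} \lra J$ just described. For the stages given by inner horn inclusions $\Lambda^k[n] \lra \Delta[n]$, the Leibniz product with any monomorphism $j$ is inner anodyne by the standard pushout-product closure property of inner anodyne extensions (see \cite[Corollary 2.3.2.4]{MR2522659} or \cite[Lemma 3.5]{MR1935979}); crucially this requires nothing about $T$. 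The only stage that needs the hypotheses is the first one, namely the Leibniz product of an end-point inclusion $\{0\} \lra \Delta[1]$ (in each of the two directions) with $j \colon S \lra T$.

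So the heart of the matter — and the step I expect to be the main obstacle — is showing that for a bijective-on-$0$-simplices monomorphism $j \colon S \lra T$ into a quasi-category $T$, the Leibniz products
\[
(\{0\} \lra \Delta[1]) \mathbin{\widehat{\times}} j
\qquad\text{and}\qquad
(\{1\} \lra \Delta[1]) \mathbin{\widehat{\times}} j
\]
are inner anodyne. This is exactly a parametrised form of the statement that the inclusion of an end-point of $\Delta[1]$ into $\Delta[1]$, after crossing with a quasi-category, becomes inner anodyne; the bijectivity-on-$0$-simplices of $j$ is what prevents the outer-horn pathology, and the quasi-category hypothesis on $T$ is what lets one fill the requisite horns. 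I would prove this by the Eilenberg--Zilber / staircase analysis of the nondegenerate simplices of $\Delta[1] \times T$ relative to $\Delta[1] \times S \cup \{0\} \times T$: order the nondegenerate simplices of $T$ by dimension, and for each such simplex $\tau \colon \Delta[n] \lra T$ not in the image of $S$, the corresponding new cells of the product are indexed by the $(n{+}1)$-simplices of $\Delta[1] \times \Delta[n]$; one checks that attaching them amounts to filling inner horns — the key point being that because $\tau$ does not factor through $S$ and $j$ is bijective on vertices, $n \geq 1$ and the relevant horn is always an \emph{inner} horn $\Lambda^k[n+1]$ with $0 < k < n+1$, whose filler exists since $T$ (hence $\Delta[1]\times T$, via the dual assertion or directly) admits inner horn fillers. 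Assembling these pushouts of inner horn inclusions into a transfinite composite gives the claim. Finally, combining the first stage (just handled) with the inner-horn stages (handled by the general closure property) yields that $\partial_0 \mathbin{\widehat{\times}} j$ is a composite of pushouts of inner anodyne extensions, hence inner anodyne, as required.
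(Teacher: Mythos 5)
There is a genuine gap, and it is fatal to the chosen route. Your plan is to filter the end-point inclusion $\partial_0 \colon \{0\} \lra J$ into cells and take Leibniz products cell by cell, reducing everything to the claim that $(\{0\} \lra \Delta[1]) \mathbin{\widehat{\times}} j$ is inner anodyne when $j$ is a bijective-on-$0$-simplices monomorphism into a quasi-category. That claim is false. Take $j$ to be the boundary inclusion $\partial\Delta[1] \lra \Delta[1]$. The Leibniz product is then the inclusion of the three edges $(0,0)\to(1,0)$, $(0,1)\to(1,1)$, $(0,0)\to(0,1)$ into the square $\Delta[1]\times\Delta[1]$. Applying the fundamental-category functor $\tau_1$ (which sends weak categorical equivalences to equivalences of categories, and sends a $1$-skeletal simplicial set to the free category on its nondegenerate edges), the domain has $\Hom\bigl((1,0),(1,1)\bigr)=\emptyset$ while $[1]\times[1]$ has a morphism there; so the map is not a weak categorical equivalence, hence not inner anodyne. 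The same example shows your concluding ``the relevant horn is always an inner horn'' cannot be right: in the staircase decomposition of $\Delta[1]\times\Delta[n]$ one of the top shuffles must be attached along an \emph{outer} horn, which is exactly why this pushout-product is (left) anodyne but not inner anodyne. Your preliminary claim about $\{0\}\lra J$ is also incorrect: in the skeletal filtration the second edge and all higher nondegenerate simplices of $J$ are attached along \emph{boundary} inclusions, not inner horns, and no rearrangement can fix this, since e.g.\ $\mathrm{sk}_1 J \lra J$ is not even a weak categorical equivalence ($\tau_1(\mathrm{sk}_1 J)$ is the free category on $0\rightleftarrows 1$, which is not equivalent to the contractible groupoid). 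The underlying obstruction is that the lemma genuinely needs the contractibility of $J$ as a whole — it is false with $\Delta[1]$ in place of $J$ — so any argument that chops $J$ into cells and treats them one at a time must fail.

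The paper argues in the other variable. Since the codomain $J\times T$ is a quasi-category, it suffices to check the left lifting property against inner fibrations between quasi-categories $p\colon X\lra Y$ (factor-and-retract). Because $j$ is a bijective-on-$0$-simplices monomorphism, it is a cell complex built from boundary inclusions $b_n\colon\partial\Delta[n]\lra\Delta[n]$ with $n\geq 1$ only, so one reduces to $\partial_0\mathbin{\widehat{\times}}b_n$; by adjointness this lifting problem transposes to showing that the Leibniz cotensor $b_n\mathbin{\widehat{\pitchfork}}p\colon X^{\Delta[n]}\lra X^{\partial\Delta[n]}\times_{Y^{\partial\Delta[n]}}Y^{\Delta[n]}$ — an inner fibration between quasi-categories — is an isofibration, which is exactly the Leibniz form of Joyal's lifting theorem (and is where the hypothesis $n\geq 1$, i.e.\ bijectivity on $0$-simplices, enters). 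If you want to salvage a direct cellular argument, you must keep $J$ intact and decompose only $j$.
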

\begin{proof}
Since its codomain $J \times T$ is a quasi-category, it suffices to show that the morphism $\partial_0 \mathbin{\widehat{\times}} j$ has the left lifting property with respect to every inner fibration between quasi-categories $p \colon X \lra Y$. 
Since $j$ is a bijective-on-$0$-simplices monomorphism, it can be expressed as a countable composite of pushouts of coproducts of boundary inclusions $b_n \colon \partial\Delta[n] \lra \Delta[n]$ for $n \geq 1$, whence it suffices to show that the Leibniz product $\partial_0 \mathbin{\widehat{\times}} b_n$ has the left lifting property with respect to $p$ for every $n \geq 1$.  So, by adjointness,  it suffices to show that  the Leibniz cotensor $b_n \mathbin{\widehat{\pitchfork}} p \colon X^{\Delta[n]} \lra X^{\partial\Delta[n]} \times_{Y^{\partial\Delta[n]}} Y^{\Delta[n]}$, which is an inner fibration between quasi-categories by \cite[Theorem 2.18]{joyalbarcelona} (see also \cite[Corollary 3.2.8]{MR3931682}), is an isofibration for every $n \geq 1$. This  follows from the Leibniz product version of Joyal's lifting theorem (see e.g.\ the proof of \cite[Theorem 3.5.9]{MR3931682}).
\end{proof}

\begin{recall}[the standard simplicial enrichment of $\sSet/B$] \label{stanenr}
For each simplicial set $B$, the slice category $\sSet/B$ admits a standard (tensored and cotensored) enrichment over the cartesian closed category $\sSet$. The \emph{tensor} $S \otimes (X,p)$ of an object $(X,p)$ in $\sSet/B$ with a simplicial set $S$ is the object $(S \times X, p\circ \mathrm{pr}_2)$ of $\sSet/B$. For each pair of objects $(X,p)$, $(Y,q)$ in $\sSet/B$, their \emph{simplicial hom} $\Fun_B((X,p),(Y,q))$ (which we sometimes simply denote  by $\Fun_B(X,Y)$) is the simplicial set defined by the pullback square below.
\begin{equation*}
\cd{
\Fun_B((X,p),(Y,q)) \fatpullbackcorner \ar[r] \ar[d] & Y^X \ar[d]^-{q^X} \\
\Delta[0] \ar[r]_-{\ulcorner p \urcorner} & B^X
}
\end{equation*}
\end{recall}

\begin{theorem}[the parametrised Joyal model structure] \label{parajoyal}
Let $B$ be a simplicial set. There exists a cofibrantly generated model structure on the category $\sSet/B$ whose cofibrations are the monomorphisms and whose fibrant objects are the inner fibrations with codomain $B$. A morphism between fibrant objects is a fibration if and only if it is a fibrewise isofibration over $B$. 
\end{theorem}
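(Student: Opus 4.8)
The plan is to construct this model structure via Cisinski's theory of model structures on presheaf categories, as formulated in \cite[\S1.3]{MR2294028} and \cite[\S2.4]{MR3931682}. Recall that $\sSet/B$ is the category of presheaves on the category of simplices $\Delta/B$, and that Cisinski's machine produces a model structure on such a presheaf category from the choice of (a) a \emph{cylinder functor} and (b) a set $S$ of morphisms (the ``generating anodyne extensions'' beyond those forced by the cylinder), subject to certain conditions. The cofibrations will automatically be the monomorphisms. The natural choice of cylinder is the one given by tensoring with the interval $J$ (the nerve of the contractible groupoid on two objects), using the standard simplicial enrichment of $\sSet/B$ recalled above; this is an ``exact cylinder'' in Cisinski's sense because $J\times(-)$ preserves monomorphisms and the relevant pushout-products behave well in the presheaf category $\sSet$. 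For the set $S$, I would take (representables-indexed versions of) the inner horn inclusions, i.e.\ the maps $(\Lambda^k[m],\beta\circ h^k_m)\lra(\Delta[m],\beta)$ for $([m],\beta)\in\Delta/B$ and $0<k<m$. The resulting Cisinski model structure then has as its anodyne extensions the saturation of $S$ together with the pushout-products $\partial_0\mathbin{\widehat\times}(-)$ against monomorphisms; call these the \emph{parametrised-Joyal-anodyne} maps.

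The key computation — and the reason Lemma \ref{funlemma} was proved — is to identify the fibrant objects. An object $(X,p)$ is fibrant iff it has the right lifting property against every parametrised-Joyal-anodyne map. Having the RLP against (the saturation of) the inner horn inclusions in $\sSet/B$ is, by adjunction/unwinding, exactly the condition that $p\colon X\lra B$ be an inner fibration. So the content is to show that \emph{every inner fibration} $p\colon X\lra B$ \emph{also} has the RLP against the maps $\partial_0\mathbin{\widehat\times}j$ for $j$ a monomorphism of objects of $\sSet/B$; by a cellular-presentation argument it suffices to treat $j=b_n$ a boundary inclusion $(\partial\Delta[n],\partial\beta)\lra(\Delta[n],\beta)$. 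Here one reduces, again by adjunction, to a lifting problem over $\Delta[n]$, hence (pulling $X$ back along $\beta$) to a lifting problem against an inner fibration with base $\Delta[n]$; since $\Delta[n]$ is a quasi-category, the total space is a quasi-category and the map is an inner fibration \emph{between quasi-categories}, at which point Lemma \ref{funlemma} (with $T=\Delta[n]$, $S=\partial\Delta[n]$, $j=b_n$, which is bijective on $0$-simplices) shows $\partial_0\mathbin{\widehat\times} b_n$ is inner anodyne and hence has the required lifting. This establishes that the fibrant objects are precisely the inner fibrations with codomain $B$.

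For the last sentence, I would characterise the fibrations between fibrant objects. In a Cisinski model structure, a morphism between fibrant objects is a fibration iff it has the RLP against all trivial cofibrations, equivalently (by a standard argument using the simplicial enrichment and the fact that every object is cofibrant) iff it has the RLP against the anodyne extensions, i.e.\ against $S$ and against $\partial_0\mathbin{\widehat\times}(-)$ applied to monomorphisms. RLP against $S$ is again just ``inner fibration''. For the interval maps: given inner fibrations $p,q$ over $B$ and $f\colon(X,p)\lra(Y,q)$, testing $f$ against $\partial_0\mathbin{\widehat\times} b_n$ for each $([n],\beta)\in\Delta/B$, and using that fibres over $0$-simplices are what the $n=0$ case sees, one shows — by the Leibniz-product form of Joyal's lifting theorem, as in the proof of Lemma \ref{funlemma} — that this holds iff $f$ is an inner fibration and each induced map on fibres $X_b\lra Y_b$ over a $0$-simplex $b$ is an isofibration; that is, iff $f$ is a fibrewise isofibration over $B$ in the sense of Definition \ref{fibisodef}. (One direction is immediate from the RLP characterisation of isofibrations; the other is the content of the fibrewise/parametrised Joyal lifting argument.) Cofibrant generation is inherited from Cisinski's construction, since $\sSet$ is small and the generating sets are small.

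The main obstacle is the fibrant-object identification, i.e.\ showing inner fibrations are stable enough to absorb the interval-anodyne maps: this is exactly where the ``parametrised'' nature of the interval (the fact that $J$ is \emph{not} inner anodyne over a point, only a categorical equivalence) bites, and it is handled precisely by Lemma \ref{funlemma} after reducing along representables to a base of the form $\Delta[n]$. The analogous subtlety recurs in the fibrewise-isofibration characterisation and is dispatched by the Leibniz-product form of Joyal's lifting theorem.
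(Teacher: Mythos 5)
Your proposal is correct and follows essentially the same route as the paper: both run Cisinski's machinery on $\sSet/B$ with the exact cylinder $J\otimes(-)$, both take the inner horn inclusions over $B$ as generators, and both use Lemma \ref{funlemma} to show that the Leibniz tensors $\partial_0\mathbin{\widehat{\otimes}}b_n$ are absorbed by inner fibrations; the only presentational difference is that the paper declares the class $\mathsf{An}$ explicitly (endpoint inclusions at $0$-simplices together with inner horns) and verifies the anodyne-extension axioms, whereas you let the machine generate the class from the inner horns alone and then compute the fibrant objects.

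One small step as written would fail: you invoke Lemma \ref{funlemma} uniformly for $j=b_n$, but for $n=0$ the boundary inclusion $b_0\colon\emptyset\lra\Delta[0]$ is not bijective on $0$-simplices, and indeed $\partial_0\mathbin{\widehat{\times}}b_0=\partial_0\colon\{0\}\lra J$ is not inner anodyne (it is not even bijective on $0$-simplices). The required lifting of an inner fibration against $(\Delta[0],b)\lra J\otimes(\Delta[0],b)$ nonetheless exists trivially, via the constant lift $J\lra\Delta[0]\lra X$ at the given vertex; this is precisely the case the paper handles by placing these endpoint inclusions directly into the generating set of $\mathsf{An}$ and checking the lift separately. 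With that one-line repair your argument goes through.
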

\begin{proof}
Since the category $\sSet/B$ is equivalent to the presheaf category $[(\Delta/B)^\mathrm{op},\Set]$, we may apply the method of \cite[\S1.3]{MR2294028} and \cite[\S2.4]{MR3931682} for constructing cofibrantly generated model structures on presheaf categories whose cofibrations are the monomorphisms. Recall that one constructs such a model structure by this method by specifying an \emph{exact cylinder} (see \cite[D{\'e}finition 1.3.6]{MR2294028} or \cite[Definition 2.4.8]{MR3931682}) and a class of \emph{anodyne extensions} relative to that exact cylinder (see \cite[D{\'e}finition 1.3.10]{MR2294028} or \cite[Definition 2.4.11]{MR3931682}).

We define our exact cylinder on $\sSet/B$ to be the endofunctor $J \otimes (-)$ on $\sSet/B$ given by tensoring
(see Recollection \ref{stanenr}) with the object $J \in \sSet$; the requisite ``boundary inclusion'' and ``projection'' natural transformations are given by tensoring with the boundary inclusion $\partial J \lra J$ (where $\partial J = \{0,1\}$) and the projection $J \lra \Delta[0]$ respectively. The exact cylinder axioms DH1 and DH2 are easily verified (cf.\ \cite[Exemple 1.3.8]{MR2294028} or \cite[Example 2.4.10]{MR3931682}).  

We define our class $\mathsf{An}$ of anodyne extensions relative to this exact cylinder to be the (weak) saturation of the set of morphisms in $\sSet/B$ consisting of the end-point inclusion $\partial_0 \colon (\Delta[0],b) \lra J \otimes (\Delta[0],b)$ for every $0$-simplex $b$ in $B$, and the inner horn inclusion $h^k_n \colon (\Lambda^k[n],\Lambda^k[\beta]) \lra (\Delta[n],\beta)$  for every $n \geq 2$, $0 < k < n$, and $n$-simplex $\beta$ in $B$, as displayed below.
\begin{equation*}
\cd[@C=1em]{
\Delta[0] \ar[rr]^-{\partial_0} \ar[dr]_-{b} && J \ar[dl]^-{b \circ \mathrm{pr}} \\
& B
}
\qquad
\qquad
\cd[@C=1em]{
\Lambda^k[n] \ar[rr]^-{h^k_n} \ar[dr]_-{\Lambda^k[\beta]} && \Delta[n] \ar[dl]^-{\beta} \\
& B
}
\end{equation*}
It follows that every inner anodyne extension in $\sSet/B$ belongs to the class $\mathsf{An}$.

We now show that this class $\mathsf{An}$ satisfies the axioms for a class of anodyne extensions relative to the exact cylinder $J \otimes (-)$. Axiom An0 is immediate from the definition of the class $\mathsf{An}$. By Observation \ref{cellobs}, to verify axiom An1 it suffices to show that the Leibniz tensor
\begin{equation*}
\partial_0 \mathbin{\widehat{\otimes}} b_n \colon (J \otimes (\partial\Delta[n],\partial\beta)) \cup_{\{0\} \otimes (\partial\Delta[n],\partial\beta)} (\{0\} \otimes (\Delta[n],\beta)) \lra J \otimes (\Delta[n],\beta)
\end{equation*}
belongs to $\mathsf{An}$ for every $n \geq 0$ and $n$-simplex $\beta$ of $B$. If $n = 0$, this is immediate from the definition of $\mathsf{An}$; if $n \geq 1$, this follows from Lemma \ref{funlemma}, since every inner anodyne extension in $\sSet/B$ belongs to $\mathsf{An}$. To verify axiom An2, it suffices to show that the Leibniz tensor of each of the above generators of the saturated class $\mathsf{An}$ with the boundary inclusion $\partial J \lra J$ belongs to $\mathsf{An}$. In fact, the underlying morphism of simplicial sets of each such Leibniz tensor product is an inner anodyne extension, since it is either the Leibniz product of $\{0\} \lra J$ with the bijective-on-$0$-simplices monomorphism $\partial J \lra J$, which is inner anodyne by Lemma \ref{funlemma} (since $J$ is a quasi-category), or the Leibniz product of an inner horn inclusion with the monomorphism $\partial J \lra J$, which is inner anodyne by \cite[Theorem 2.17]{joyalbarcelona} (see also \cite[Corollary 3.2.4]{MR3931682}). 

Hence it follows from \cite[Th{\'e}or{\`e}me 1.3.22, Proposition 1.3.36]{MR2294028} or \cite[Theorem 2.4.19]{MR3931682} that there exists a cofibrantly generated model structure on $\sSet/B$ whose cofibrations are the monomorphisms, and in which an object is fibrant (resp.\ a morphism between fibrant objects is a fibration)  if and only if it has the right lifting property with respect to the class of morphisms $\mathsf{An}$. It is easily shown that these (fibrations between) fibrant objects are precisely as described in the statement of the  theorem.
\end{proof}

For each simplicial set $B$, we call the model structure of Theorem \ref{parajoyal} the \emph{parametrised Joyal model structure} on $\sSet/B$. 

\begin{proposition} \label{recogprinc}
Let $B$ be a simplicial set, let $\mathcal{C}$ be a model category, and let $F \colon \sSet/B \lra \mathcal{C}$ be a cocontinuous functor that sends monomorphisms to cofibrations. Then $F$ sends every weak equivalence in the parametrised Joyal model structure on $\sSet/B$ to a weak equivalence in $\mathcal{C}$ if and only if it sends the following morphisms to weak equivalences in $\mathcal{C}$:
\begin{enumerate}[font=\normalfont,  label=(\roman*)]
\item for each $0$-simplex $b$ of $B$, the projection $\mathrm{pr} \colon (J,b\circ \mathrm{pr}) \lra (\Delta[0],b)$, and
\item for each $n \geq 2$, $0 < k < n$, and $n$-simplex $\beta$ of $B$, the inner horn inclusion \linebreak $h^k_n \colon (\Lambda^k[n],\Lambda^k[\beta]) \lra (\Delta[n],\beta)$.
\end{enumerate}
In particular, each of the morphisms listed above is a weak equivalence in the parametrised Joyal model structure on $\sSet/B$. 
\end{proposition}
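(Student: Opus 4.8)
The plan is to treat the two implications separately. The nontrivial direction is sufficiency, and the key input is the fact---built into the construction of the parametrised Joyal model structure in the proof of Theorem~\ref{parajoyal}---that its class of trivial cofibrations is precisely the saturated class $\mathsf{An}$ generated by the endpoint inclusions $\partial_0 \colon (\Delta[0],b) \lra (J, b\circ \mathrm{pr})$ and the inner horn inclusions $h^k_n \colon (\Lambda^k[n],\Lambda^k[\beta]) \lra (\Delta[n],\beta)$ of that proof (the verification of axioms An0--An2 there is exactly what guarantees this; see also \cite[\S1.3]{MR2294028} and \cite[\S2.4]{MR3931682}).

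For necessity---and the ``in particular'' clause---I would first note that the morphisms in (i) and (ii) are weak equivalences in the parametrised Joyal model structure: the inner horn inclusions in (ii) are (inner) anodyne extensions, hence trivial cofibrations; and for (i), the section $\partial_0$ of the projection $\mathrm{pr}$ is a generating anodyne extension, hence a trivial cofibration, so $\mathrm{pr}$ is a weak equivalence by the two-out-of-three property since $\mathrm{pr}\circ\partial_0 = \mathrm{id}$. Consequently any $F$ that preserves weak equivalences sends these morphisms to weak equivalences.

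For sufficiency, assume $F$ sends the morphisms in (i) and (ii) to weak equivalences. The first step is to check that $F$ carries each generating anodyne extension to a \emph{trivial cofibration} of $\mathcal{C}$: each $h^k_n$ is a monomorphism, so $F(h^k_n)$ is a cofibration, and it is a weak equivalence by (ii); each $\partial_0$ is a monomorphism, so $F(\partial_0)$ is a cofibration, and it is a weak equivalence by two-out-of-three applied to $F(\mathrm{pr})\circ F(\partial_0)=\mathrm{id}$, using hypothesis (i). Since $F$ is cocontinuous and the trivial cofibrations of $\mathcal{C}$ are closed under pushout, transfinite composition, coproduct and retract, it follows that $F$ sends every member of $\mathsf{An}$---that is, every trivial cofibration of $\sSet/B$---to a trivial cofibration, and in particular to a weak equivalence, of $\mathcal{C}$. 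Finally, every object of $\sSet/B$ is cofibrant (the cofibrations being the monomorphisms), so Ken Brown's lemma---or, equivalently, the recognition lemmas \cite[Lemma 7.14]{MR2342834} and \cite[Proposition 7.15]{MR2342834}---implies that $F$ sends every weak equivalence of $\sSet/B$ to a weak equivalence of $\mathcal{C}$, which completes the proof. (The last assertion of the proposition follows from necessity, or directly from the second paragraph.)

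The only real obstacle is the identification ``trivial cofibrations $=\mathsf{An}$'', which must be extracted from Cisinski's construction rather than taken for granted; a minor wrinkle is that item (i) is phrased in terms of the projection $\mathrm{pr}$ rather than its section $\partial_0$, which is the actual generating anodyne extension, but this is absorbed by the two-out-of-three property together with the cofibrancy of $(\Delta[0],b)$, as above.
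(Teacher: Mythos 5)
Your reduction of both directions to the single claim that $F$ sends every member of the saturated class $\mathsf{An}$ to a trivial cofibration is sound, and that part of your argument (including the two-out-of-three manoeuvre relating the projection $\mathrm{pr}$ to its section $\partial_0$) is exactly right. The gap is precisely where you flag it, and it cannot be "extracted from Cisinski's construction": the identification of the trivial cofibrations of the parametrised Joyal model structure with the class $\mathsf{An}$ is not part of that construction and is false in general. Cisinski's machinery yields only the inclusion of $\mathsf{An}$ into the trivial cofibrations, together with the statements that an object is fibrant iff it has the RLP against $\mathsf{An}$ and that a morphism \emph{between fibrant objects} is a fibration iff it has the RLP against $\mathsf{An}$; the equality you assert is the much stronger statement that the class of anodyne extensions is \emph{complete}, which does not follow from the axioms An0--An2. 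Already for $B = \Delta[0]$ your claim would exhibit the saturation of the inner horn inclusions together with $\{0\} \lra J$ as the class of trivial cofibrations of the Joyal model structure -- equivalently, it would identify the "naive fibrations" with the Joyal fibrations -- which is not known and which the phenomena discussed in \cite{contrasample} and \cite[\S2.10]{joyalnotes} strongly suggest is false. Consequently Ken Brown's lemma cannot be invoked as you do: you have shown that $F$ inverts the anodyne maps, not all trivial cofibrations.

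The standard argument closes this gap by routing through fibrations between fibrant objects rather than through trivial cofibrations. Since $F$ is cocontinuous out of a presheaf category it has a right adjoint $G$; your correct first step (that $F$ sends $\mathsf{An}$ into the trivial cofibrations of $\mathcal{C}$) implies by adjointness that $G$ sends fibrant objects to objects with the RLP against $\mathsf{An}$, i.e.\ to fibrant objects, and sends fibrations between fibrant objects to morphisms between fibrant objects with the RLP against $\mathsf{An}$, which by Theorem \ref{parajoyal} are fibrations. By \cite[Lemma 7.14]{MR2342834} a cofibration with cofibrant domain is trivial if and only if it has the LLP against every fibration between fibrant objects (and every object in the image of $F$ is cofibrant, as $F$ preserves the initial object and monomorphisms); another adjointness transposition then shows that $F$ sends \emph{every} trivial cofibration of $\sSet/B$ to a trivial cofibration of $\mathcal{C}$, after which \cite[Proposition 7.15]{MR2342834}, or Ken Brown's lemma, finishes the proof. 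The alternative route via \cite[Proposition 2.4.40]{MR3931682} instead shows that $F$ carries the exact cylinder $J \otimes ({-})$ to a cylinder up to weak equivalence, hence preserves $J$-homotopy equivalences, and reduces a general weak equivalence to one between fibrant objects by anodyne fibrant replacement. Either way, the correct substitute for your false identification is the characterisation of fibrations between fibrant objects by the RLP against $\mathsf{An}$.
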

\begin{proof}
This follows by a standard argument from Theorem \ref{parajoyal}, \cite[Proposition 7.14]{MR2342834}, and \cite[Proposition 7.15]{MR2342834}. Alternatively, it follows from the construction of the model structure given in the proof of Theorem \ref{parajoyal} by \cite[Proposition 2.4.40]{MR3931682}. 
\end{proof}

\begin{observation} \label{bjday}
In the case $B =\Delta[0]$, the parametrised Joyal model structure on $\sSet/\Delta[0] \cong \sSet$ is precisely the Joyal model structure for quasi-categories.

For each simplicial set $B$, there exists (by \cite[Theorem 7.6.5(2)]{MR1944041}) a model structure  on the slice category $\sSet/B$ created by the forgetful functor $\sSet/B \lra \sSet$ from the Joyal model structure on $\sSet$. Observe that this \emph{sliced Joyal model structure} (as we shall call it) on $\sSet/B$ is a Bousfield localisation (in the sense of Definition \ref{bousdef}) of the parametrised Joyal model structure on $\sSet/B$, since every fibration in the Joyal model structure on $\sSet$ is in particular an inner fibration. Hence (by \cite[Proposition E.1.10]{joyalbarcelona}) every weak equivalence in the parametrised Joyal model structure on $\sSet/B$ is a weak categorical equivalence (see Proposition \ref{quil} for another proof of this fact). 

If $B$ is a quasi-category whose homotopy category has no non-identity isomorphisms, then every inner fibration with codomain $B$ is an isofibration, and so the ``sliced'' and ``parametrised'' Joyal model structures on $\sSet/B$ coincide (by \cite[Proposition E.1.10]{joyalbarcelona}). This coincidence does not occur in general, such as in the case $B = J$. For the morphism $\partial_0 \colon \{0\} \lra J$ is an inner fibration but not an isofibration; furthermore, this morphism is a weak categorical equivalence, but the morphism $\partial_0 \colon (\{0\},\partial_0) \lra (J,1_J)$ is not a weak equivalence in the parametrised Joyal model structure on $\sSet/J$, since it is a morphism between cofibrant-fibrant objects therein, but there exists no  morphism $(J,1_J) \lra (\{0\},\partial_0)$ in $\sSet/J$.
\end{observation}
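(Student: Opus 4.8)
The plan is to dispatch the assertions of Observation \ref{bjday} one by one, each reducing to Theorem \ref{parajoyal} together with the principle that two model structures on a category that share their cofibrations and their fibrant objects coincide (\cite[Proposition E.1.10]{joyalbarcelona}), and with the elementary theory of Bousfield localisations (Definition \ref{bousdef}, Lemma \ref{bouslem}). For the case $B = \Delta[0]$: an inner fibration with codomain $\Delta[0]$ is precisely a quasi-category, so by Theorem \ref{parajoyal} the parametrised Joyal model structure on $\sSet/\Delta[0] \cong \sSet$ has the monomorphisms as cofibrations and the quasi-categories as fibrant objects, which are exactly the cofibrations and fibrant objects of the Joyal model structure; hence the two coincide.

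For an arbitrary $B$, I would first note that both the sliced Joyal model structure (which exists by \cite[Theorem 7.6.5(2)]{MR1944041}) and the parametrised Joyal model structure on $\sSet/B$ have the monomorphisms as cofibrations --- for the former because the forgetful functor $\sSet/B \lra \sSet$ creates it and a morphism of $\sSet/B$ is monic exactly when its underlying morphism of simplicial sets is. Since every Joyal fibration is in particular an inner fibration, Theorem \ref{parajoyal} shows that every fibrant object of the sliced structure is fibrant in the parametrised one; so the sliced structure is a Bousfield localisation of the parametrised one in the sense of Definition \ref{bousdef}, and Lemma \ref{bouslem} then yields that every weak equivalence of the parametrised Joyal model structure is a weak equivalence of the sliced one --- that is, a morphism whose underlying morphism of simplicial sets is a weak categorical equivalence.

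Next I would compare the two structures. If $B$ is a quasi-category whose homotopy category has no non-identity isomorphism, then for any inner fibration $p \colon X \lra B$ the simplicial set $X$ is a quasi-category (\cite[Corollary 3.23]{joyalbarcelona}) and $\ho(p)$ is automatically an isofibration of categories, the only isomorphisms of $\ho(B)$ to lift against being identities; thus $p$ is an isofibration, hence a Joyal fibration, so the two structures --- already sharing their cofibrations --- have the same fibrant objects and coincide. To see they differ in general I would take $B = J$. The endpoint inclusion $\partial_0 \colon \{0\} \lra J$ is an inner fibration: since $J$ is $0$-coskeletal, $\partial_0$ lifts against every monomorphism that is bijective on $0$-simplices --- as one checks via the coskeleton adjunction --- and in particular against every inner horn inclusion. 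But $\partial_0$ is not an isofibration, since $\ho(\partial_0)$ picks out the object $0$ of the contractible groupoid $\ho(J)$ and the isomorphism $0 \to 1$ admits no lift; so $(\{0\},\partial_0)$ is fibrant in the parametrised but not the sliced Joyal model structure on $\sSet/J$, and the two are distinct.

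Finally, $J$ is a contractible Kan complex, so $\partial_0 \colon \{0\} \lra J$ is a weak categorical equivalence (by two-out-of-three applied to $\{0\} \lra J \lra \Delta[0]$), whereas $\partial_0 \colon (\{0\},\partial_0) \lra (J,1_J)$ is not a weak equivalence of the parametrised Joyal model structure on $\sSet/J$: its source and target are there cofibrant and fibrant (the target because $1_J$ is an inner fibration), so were it a weak equivalence it would be a homotopy equivalence, and in particular there would exist a morphism $(J,1_J) \lra (\{0\},\partial_0)$ in $\sSet/J$; but such a morphism would be a map $r \colon J \lra \{0\}$ with $\partial_0 \circ r = 1_J$, which is impossible because $\partial_0$ is not surjective on $0$-simplices. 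The conceptual reductions here are all routine; I expect the only care to be needed in the case $B = J$ --- in recognising $J$ as $0$-coskeletal, so that $\{0\} \hookrightarrow J$ lifts against inner horns, and in invoking the Whitehead-type fact that a weak equivalence between objects that are both cofibrant and fibrant is a homotopy equivalence.
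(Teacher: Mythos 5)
Your proposal is correct and follows essentially the same route as the paper's own justification of this observation: the same identification of fibrant objects via Theorem \ref{parajoyal}, the same appeal to the Bousfield localisation formalism (Definition \ref{bousdef} and Lemma \ref{bouslem}) together with \cite[Proposition E.1.10]{joyalbarcelona}, and the same counterexample analysis for $B = J$ (no morphism $(J,1_J) \lra (\{0\},\partial_0)$ exists, so the map cannot be a weak equivalence between cofibrant-fibrant objects). The only difference is that you supply details the paper leaves implicit --- e.g.\ that $\partial_0 \colon \{0\} \lra J$ is an inner fibration via the $0$-coskeletality of $J$ (one could also just note it is the nerve of a functor) --- and these all check out.
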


\begin{proposition} \label{quil}
For each morphism of simplicial sets $u \colon B \lra B'$, the adjunction
\begin{equation*}
\xymatrix{
\sSet/B \ar@<1.5ex>[rr]_-{\hdash}^-{u_!} && \ar@<1.5ex>[ll]^-{u^*} \sSet/B'
}
\end{equation*}
is a Quillen adjunction between the parametrised Joyal model structures on $\sSet/B$ and $\sSet/B'$. In particular, for each simplicial set $B$,  every weak equivalence in the parametrised Joyal model structure on $\sSet/B$ is a weak categorical equivalence. 
\end{proposition}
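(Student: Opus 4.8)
The plan is to verify that the adjunction $u_! \dashv u^*$ is Quillen by checking that the left adjoint $u_!$ (composition with $u$) preserves cofibrations and anodyne extensions relative to the exact cylinder $J \otimes (-)$ used in the proof of Theorem \ref{parajoyal}; by \cite[Proposition 2.4.40]{MR3931682} (or a standard argument via \cite[Proposition 7.15]{MR2342834}), this suffices for $u_!$ to be left Quillen between the parametrised Joyal model structures. First I would observe that $u_!$ preserves monomorphisms, since it acts on underlying simplicial sets as the identity and monomorphisms in $\sSet/B$ are detected on underlying simplicial sets; hence it preserves cofibrations. Next, since the class $\mathsf{An}$ of anodyne extensions on $\sSet/B$ is the saturation of the generating set described in the proof of Theorem \ref{parajoyal}, and $u_!$ is cocontinuous (being a left adjoint), it suffices to check that $u_!$ sends each generating anodyne extension in $\sSet/B$ into $\mathsf{An}$ for $\sSet/B'$. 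But $u_!$ sends the end-point inclusion $\partial_0 \colon (\Delta[0],b) \lra J \otimes (\Delta[0],b)$ over $B$ to the corresponding end-point inclusion $\partial_0 \colon (\Delta[0], ub) \lra J \otimes (\Delta[0],ub)$ over $B'$, which is a generating anodyne extension there; and it sends the inner horn inclusion $h^k_n \colon (\Lambda^k[n],\Lambda^k[\beta]) \lra (\Delta[n],\beta)$ over $B$ to $h^k_n \colon (\Lambda^k[n],\Lambda^k[u\beta]) \lra (\Delta[n],u\beta)$ over $B'$, again a generating anodyne extension. So $u_!$ preserves anodyne extensions, completing the proof that the adjunction is Quillen.

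For the "in particular" clause, I would apply the Quillen adjunction just established to the unique morphism $u \colon B \lra \Delta[0]$. This gives a Quillen adjunction $u_! \dashv u^*$ between the parametrised Joyal model structure on $\sSet/B$ and the parametrised Joyal model structure on $\sSet/\Delta[0]$, which by Observation \ref{bjday} is the Joyal model structure for quasi-categories on $\sSet$. A left Quillen functor preserves weak equivalences between cofibrant objects, and every object of $\sSet/B$ is cofibrant (the cofibrations being the monomorphisms, so the map from the initial object is always a cofibration); hence $u_!$ sends every weak equivalence in the parametrised Joyal model structure on $\sSet/B$ to a weak equivalence in the Joyal model structure on $\sSet$, i.e.\ to a weak categorical equivalence. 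Since $u_! \colon \sSet/B \lra \sSet/\Delta[0] \cong \sSet$ is, on underlying simplicial sets, simply the forgetful functor, this says exactly that the underlying morphism of simplicial sets of any weak equivalence in the parametrised Joyal model structure on $\sSet/B$ is a weak categorical equivalence.

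I do not anticipate a serious obstacle here: the argument is a routine base-change verification of the kind used repeatedly in this paper (compare the proofs of Propositions \ref{pfqadj} and \ref{reflprop}), and the only mild subtlety is making sure one checks the anodyne-extension condition on generators rather than on the whole saturated class — which is legitimate precisely because $u_!$ is cocontinuous. If one prefers to avoid invoking \cite[Proposition 2.4.40]{MR3931682}, the same conclusion follows from the recognition principle already recorded as Proposition \ref{recogprinc}: one need only check that $u_!$ sends the two families of morphisms listed there (projections $(J, b \circ \mathrm{pr}) \lra (\Delta[0],b)$ and inner horn inclusions) to weak equivalences in $\sSet/B'$, which is immediate since $u_!$ carries them to morphisms of exactly the same two types over $B'$, and these are weak equivalences in the parametrised Joyal model structure on $\sSet/B'$ by the last sentence of Proposition \ref{recogprinc}.
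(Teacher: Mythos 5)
Your proposal is correct and follows essentially the same route as the paper: the paper's proof simply invokes Proposition \ref{recogprinc} for the first statement (exactly the argument in your final paragraph, checking the two families of generators are sent to morphisms of the same type over $B'$) and takes $u \colon B \lra \Delta[0]$ for the second. Your additional verification via Cisinski's anodyne-extension machinery is a valid but redundant elaboration of the same idea.
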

\begin{proof}
The first statement follows from Proposition \ref{recogprinc}. The second statement follows from the first by taking $u$ to be  the morphism $B \lra \Delta[0]$.
\end{proof}

We now use Proposition \ref{recogprinc} to obtain a new proof of the following result of Joyal \cite[Corollary 7.11]{joyalbarcelona}, which we used in the proof of Proposition \ref{reflprop}. (Recall that we denote by $\partial_0$ and $\partial_1$ the functors $\sSet/\Delta[1] \lra \sSet$ that send a morphism $X \lra \Delta[1]$ to its fibres over $0$ and $1$ respectively.)

\begin{proposition}[Joyal] \label{joyalcor}
The functors $\partial_0,\partial_1 \colon \sSet/\Delta[1] \lra \sSet$ preserve weak categorical equivalences.
\end{proposition}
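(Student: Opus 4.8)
The plan is to deduce this from Proposition \ref{recogprinc}, applied to the functor $\partial_0 \colon \sSet/\Delta[1] \lra \sSet$ regarded as a cocontinuous functor out of $\sSet/\Delta[1]$ equipped with the parametrised Joyal model structure. First I would record the two preliminary facts that make this machinery applicable. On the one hand, $\partial_0$ is the pullback functor $i^*$ along the vertex inclusion $i \colon \{0\} \lra \Delta[1]$; since $\sSet$ is locally cartesian closed, $i^*$ belongs to an adjoint triple $i_! \dashv i^* \dashv i_*$, so $\partial_0$ is cocontinuous, and it evidently preserves monomorphisms. On the other hand --- and this is the point on which the whole argument turns --- the parametrised Joyal model structure on $\sSet/\Delta[1]$ coincides with the sliced Joyal model structure, by Observation \ref{bjday}, because $\Delta[1]$ is a quasi-category whose homotopy category $\mathbf{2} = \{0 < 1\}$ has no non-identity isomorphisms. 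Consequently the weak equivalences of the parametrised Joyal model structure on $\sSet/\Delta[1]$ are exactly the weak categorical equivalences, and Proposition \ref{recogprinc} (with $B = \Delta[1]$ and $\mathcal{C} = \sSet$ equipped with the Joyal model structure) reduces the problem to checking that $\partial_0$ sends the morphisms (i) and (ii) listed there to weak categorical equivalences in $\sSet$.

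The verification of this is a short combinatorial calculation, which I would carry out as follows. For a morphism of type (i): if the chosen $0$-simplex is $0$, then $\partial_0$ of the projection $(J, \mathrm{pr}) \lra (\Delta[0], 0)$ is the projection $J \lra \Delta[0]$, a weak categorical equivalence; if the chosen $0$-simplex is $1$, then both $\partial_0$'s are empty and the image is $\mathrm{id}_{\emptyset}$. For a morphism of type (ii), let $\beta \colon \Delta[n] \lra \Delta[1]$ be the structure map, i.e.\ a monotone map $[n] \lra [1]$, so that $\beta^{-1}(0)$ is an initial segment $\{0,\dots,p-1\}$ of $[n]$. Then $\partial_0(\Delta[n],\beta)$ is the face $\Delta[\{0,\dots,p-1\}]$, while $\partial_0(\Lambda^k[n],\Lambda^k[\beta])$ is the subcomplex of $\Lambda^k[n]$ spanned by the simplices whose image lies in $\{0,\dots,p-1\}$. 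If $\beta$ is constant at $0$ this yields the inner horn inclusion $\Lambda^k[n] \lra \Delta[n]$ itself, which is inner anodyne since $0 < k < n$; if $\beta$ is constant at $1$ it yields $\mathrm{id}_{\emptyset}$; and in the remaining case ($1 \leq p \leq n$) every simplex with image inside $\{0,\dots,p-1\}$ already lies in $\Lambda^k[n]$, since such a simplex omits the vertex $n$ and $n \neq k$, so $\partial_0$ of the inner horn inclusion is an identity. In each case the image is a weak categorical equivalence, and so $\partial_0$ preserves weak categorical equivalences.

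For $\partial_1$, I would either rerun the same argument with ``initial segment'' replaced throughout by ``final segment'', or --- more economically --- observe that $\partial_1 = \partial_0 \circ \tau$, where $\tau \colon \sSet/\Delta[1] \lra \sSet/\Delta[1]$ is the isomorphism of categories obtained by postcomposing structure maps with the vertex-swapping automorphism of $\Delta[1]$; since $\tau$ commutes with the forgetful functor to $\sSet$, it preserves weak categorical equivalences, and hence so does the composite $\partial_1$. I expect the only genuinely nontrivial step to be the identification (via Observation \ref{bjday}) of the parametrised Joyal model structure on $\sSet/\Delta[1]$ with the sliced one; granted that, Proposition \ref{recogprinc} does all the work, and the statement comes down to the elementary fact that $\partial_0$ carries an inner horn inclusion over $\Delta[1]$ either to an inner horn inclusion (when the base simplex is constant at $0$) or to an identity.
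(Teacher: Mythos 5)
Your proposal is correct and follows essentially the same route as the paper: identify the parametrised and sliced Joyal model structures on $\sSet/\Delta[1]$ via Observation \ref{bjday}, then apply Proposition \ref{recogprinc} and check the generating morphisms. The only (harmless) divergence is that for the generators of type (ii) the paper simply cites Joyal's lemma that $\partial_0$ and $\partial_1$ preserve inner anodyne extensions, whereas you verify by direct computation that the image of each inner horn inclusion over $\Delta[1]$ is either an inner horn inclusion or an identity --- a correct, self-contained substitute.
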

\begin{proof}
By Observation \ref{bjday}, a morphism in $\sSet/\Delta[1]$ is a weak equivalence in the parametrised Joyal model structure on $\sSet/\Delta[1]$ if and only if it is a weak categorical equivalence. Hence it suffices to verify the hypotheses of Proposition \ref{recogprinc}. Since the presheaf category $\sSet$ is locally cartesian closed, the functors $\partial_0$ and $\partial_1$ have both left and right adjoints, and so preserve monomorphisms and colimits. By \cite[Lemma 3.21]{joyalbarcelona}, the functors $\partial_0$ and $\partial_1$ preserve inner anodyne extensions. Hence it remains to show that these functors send the two morphisms in $\sSet/\Delta[1]$ displayed below to weak categorical equivalences.
\begin{equation*}
\cd[@C=1em]{
J \ar[rr] \ar[dr] && \{0\} \ar[dl] \\
& \Delta[1]
}
\qquad
\qquad
\cd[@C=1em]{
J \ar[rr] \ar[dr] && \{1\} \ar[dl] \\
& \Delta[1]
}
\end{equation*}
These two morphisms are sent by the functor $\partial_0$ to the projection $J \lra \Delta[0]$ and the identity $\emptyset \lra \emptyset$ respectively (and alternately by the functor $\partial_1$), which are indeed weak categorical equivalences.
\end{proof}

We now show that the parametrised Joyal model structure makes the slice category $\sSet/B$, with its standard simplicial enrichment (see Recollection \ref{stanenr}), into a ``Joyal-enriched'' model category. 

\begin{proposition} \label{enrichie}
For each simplicial set $B$, the category $\sSet/B$, equipped with the parametrised Joyal model structure and its standard simplicial enrichment, is enriched as a model category over the Joyal model structure on $\sSet$.
\end{proposition}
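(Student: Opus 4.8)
The plan is to check that $\sSet/B$, with its standard simplicial enrichment (Recollection~\ref{stanenr}), satisfies the axioms making the parametrised Joyal model structure a model category enriched over the Joyal model structure on $\sSet$. The tensored--cotensored $\sSet$-structure is the one recorded in Recollection~\ref{stanenr}, and the unit axiom holds automatically because the unit $\Delta[0]$ of the Joyal model structure is cofibrant; so the only thing to prove is the pushout--product axiom, i.e.\ that the tensor $\otimes \colon \sSet \times \sSet/B \lra \sSet/B$ is a left Quillen bifunctor. Unwinding, I must show that for a monomorphism $i \colon S \lra T$ in $\sSet$ and a monomorphism $f$ in $\sSet/B$ the exterior pushout--product $i \mathbin{\widehat{\otimes}} f$ is again a monomorphism in $\sSet/B$, and that it lies in the class $\mathsf{An}$ of \S\ref{secparajoyal} as soon as $i$ is a trivial cofibration of the Joyal model structure or $f$ is a trivial cofibration of the parametrised Joyal model structure. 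Here I would first note that the trivial cofibrations of the parametrised Joyal model structure coincide with $\mathsf{An}$: applying the retract argument to an $(\mathsf{An},\ \text{fibration})$-factorisation exhibits any trivial cofibration as a retract of a map in $\mathsf{An}$, and $\mathsf{An}$ is closed under retracts.

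The monomorphism statement is immediate, since the tensor of $\sSet/B$ over $\sSet$ is computed on underlying simplicial sets by cartesian product: the underlying map of $i \mathbin{\widehat{\otimes}} f$ is the Leibniz product $i \mathbin{\widehat{\times}} f$ in $\sSet$, a monomorphism because $\sSet$ is a topos, and monomorphisms in $\sSet/B$ are detected underlying. For the rest, I would invoke the standard fact that for a fixed arrow $g$ the classes $\{h : h \mathbin{\widehat{\otimes}} g \in \mathsf{An}\}$ and $\{h : g \mathbin{\widehat{\otimes}} h \in \mathsf{An}\}$ are weakly saturated, together with three generation facts: every monomorphism in $\sSet/B$ lies in the weak saturation of the generating cofibrations $b_n \colon (\partial\Delta[n],\partial\beta) \lra (\Delta[n],\beta)$ (Observation~\ref{cellobs}); by definition every map in $\mathsf{An}$ lies in the weak saturation of the inclusions $(\Delta[0],b) \lra J \otimes (\Delta[0],b)$ and the inner horn inclusions of $\sSet/B$; and the trivial cofibrations of the Joyal model structure on $\sSet$ lie in the weak saturation of the inner horn inclusions of $\sSet$ together with the endpoint inclusion $\{0\} \hookrightarrow J$ (for which one may use that the Joyal model structure is itself an instance of the construction recalled in the proof of Theorem~\ref{parajoyal}, for the exact cylinder $J \times (-)$ with these generating anodyne maps; cf.\ \cite{joyalnotes}, \cite{MR3931682}). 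Feeding these into the weak-saturation closure, the whole statement reduces to showing that the following four exterior pushout--products lie in $\mathsf{An}$: $h \mathbin{\widehat{\otimes}} b_n$ for an inner horn inclusion $h$ of $\sSet$; $b_m \mathbin{\widehat{\otimes}} h$ for an inner horn inclusion $h$ of $\sSet/B$; $(\{0\} \hookrightarrow J) \mathbin{\widehat{\otimes}} b_n$; and $b_m \mathbin{\widehat{\otimes}} \big( (\Delta[0],b) \lra J \otimes (\Delta[0],b) \big)$.

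Each of these is already available. For the first two, the underlying simplicial map is the Leibniz product of an inner anodyne extension with a monomorphism, hence inner anodyne by \cite[Theorem~2.17]{joyalbarcelona}; so the pushout--product is an inner anodyne extension in $\sSet/B$, and therefore lies in $\mathsf{An}$ (as recorded in the proof of Theorem~\ref{parajoyal}). The third, $(\{0\} \hookrightarrow J) \mathbin{\widehat{\otimes}} b_n$, is precisely the Leibniz tensor shown to lie in $\mathsf{An}$ in the verification of axiom An1 within the proof of Theorem~\ref{parajoyal} (immediate from the definition of $\mathsf{An}$ when $n = 0$, and a consequence of Lemma~\ref{funlemma} when $n \geq 1$). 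The fourth has underlying simplicial map $(\{0\} \hookrightarrow J) \mathbin{\widehat{\times}} b_m$, which for $m \geq 1$ is inner anodyne by Lemma~\ref{funlemma} (the inclusion $\partial\Delta[m] \lra \Delta[m]$ being bijective on $0$-simplices and $\Delta[m]$ a quasi-category), so the pushout--product is an inner anodyne extension in $\sSet/B$; and for $m = 0$ it is one of the generators of $\mathsf{An}$. This finishes the verification. I expect the only real work to be the bookkeeping of the second paragraph — organising the reduction via the closure properties of the Leibniz construction and the three weakly saturated classes so that it lands exactly on the generating pushout--products already handled in \S\ref{secparajoyal}; there is no new homotopy theory to carry out, which is to be expected since the parametrised Joyal model structure was defined through the cylinder $J \otimes (-)$ in the first place.
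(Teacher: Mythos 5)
Your reduction to the pushout--product axiom for $\otimes \colon \sSet \times \sSet/B \lra \sSet/B$ is the same first step as the paper's, and your computations of the four generating pushout--products are correct and are exactly the ingredients the paper's argument rests on (Lemma \ref{funlemma}, the stability of inner anodyne extensions under Leibniz product with monomorphisms, and the generators of $\mathsf{An}$). The genuine gap is in the reduction: you assert that the trivial cofibrations of the parametrised Joyal model structure coincide with $\mathsf{An}$, and that the trivial cofibrations of the Joyal model structure on $\sSet$ lie in the weak saturation of the inner horn inclusions together with $\{0\} \hookrightarrow J$. Neither claim is justified, and the retract argument you offer for the first fails: the small object argument factors a trivial cofibration $f$ as $f = pj$ with $j \in \mathsf{An}$ and $p$ having the right lifting property against $\mathsf{An}$, but such a $p$ is only a ``naive fibration'' in Cisinski's sense, not a fibration of the model structure (the two agree only over fibrant codomains), so $f$ need not lift against $p$ and the retract argument does not go through. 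In Cisinski-type model structures the anodyne extensions form in general a proper subclass of the trivial cofibrations (this already happens for the Kan--Quillen model structure), and no explicit generating set of trivial cofibrations for the Joyal model structure is known; this is precisely why one cannot verify the acyclicity half of the pushout--product axiom by checking it only on the generators of the anodyne classes.

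The paper avoids this by applying Proposition \ref{recogprinc} once in each variable: for fixed $S \in \sSet$ (resp.\ fixed $X \in \sSet/B$) the cocontinuous, monomorphism-preserving functor $S \otimes (-)$ (resp.\ $(-) \otimes X$) sends the listed generating morphisms to weak equivalences -- for the projections one uses that $J$ is injective, so that $S \times J \lra S$ is a $J$-homotopy equivalence -- and therefore sends \emph{all} weak equivalences, in particular all trivial cofibrations, to weak equivalences. The Leibniz product of a cofibration with a trivial cofibration is then a weak equivalence by pushout-stability of trivial cofibrations and two-out-of-three. If you replace your saturation argument for the acyclic part by this recognition-principle argument (keeping your computations, which then serve to verify the hypotheses of Proposition \ref{recogprinc}), the proof is correct.
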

\begin{proof}
It suffices to show that the tensor product bifunctor
\begin{equation*}
\cd{
\sSet \times \sSet/B \ar[r]^-{\otimes} & \sSet/B
}
\end{equation*}
is a left Quillen bifunctor with respect to the Joyal model structure on $\sSet$ and the parametrised Joyal model structure on $\sSet/B$. 
This follows from two applications of Lemma \ref{recogprinc}, together with the facts that the Leibniz product in $\sSet$ of a monomorphism with a monomorphism (resp.\ an inner anodyne extension) is a monomorphism (resp.\ an inner anodyne extension), and that $J$ is an injective object of $\sSet$.
\end{proof}

\begin{observation}[the $\infty$-cosmos of $B$-parametrised quasi-categories]
It follows from \cite[Lemma 2.2.1]{MR3556697} applied to Proposition \ref{enrichie} that the full simplicial subcategory of $\sSet/B$ (with its standard simplicial enrichment, see Recollection \ref{stanenr}) whose objects are the inner fibrations with codomain $B$ is an $\infty$-cosmos (in the sense of \cite[Definition 2.1.1]{MR3556697}), in which the ``isofibrations'' are the fibrewise isofibrations over $B$ (see Definition \ref{fibisodef}). 
\end{observation}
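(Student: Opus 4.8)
The plan is to read off this assertion from the recognition principle for $\infty$-cosmoi arising from enriched model categories recorded in \cite[Lemma 2.2.1]{MR3556697}: that result takes as input a model category which is enriched, as a model category, over the Joyal model structure on $\sSet$ and in which every fibrant object is cofibrant, and returns the conclusion that the full simplicial subcategory spanned by its fibrant objects is an $\infty$-cosmos (in the sense of \cite[Definition 2.1.1]{MR3556697}) whose isofibrations are exactly the fibrations between fibrant objects.

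First I would verify that $\sSet/B$, equipped with the parametrised Joyal model structure and its standard simplicial enrichment (Recollection \ref{stanenr}), satisfies the hypotheses of \cite[Lemma 2.2.1]{MR3556697}. The enrichment over the Joyal model structure on $\sSet$ is exactly Proposition \ref{enrichie}, and every fibrant object is cofibrant for the trivial reason that, by Theorem \ref{parajoyal}, the cofibrations of the parametrised Joyal model structure are the monomorphisms, so that \emph{every} object of $\sSet/B$ is cofibrant. It then remains only to transport the conclusion of \cite[Lemma 2.2.1]{MR3556697} through the explicit description of the parametrised Joyal model structure given in Theorem \ref{parajoyal}: its fibrant objects are the inner fibrations with codomain $B$, and a morphism between two such objects is a fibration if and only if it is a fibrewise isofibration over $B$ (Definition \ref{fibisodef}). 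Substituting these descriptions into the output of \cite[Lemma 2.2.1]{MR3556697} yields precisely the statement of the observation.

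There is no serious obstacle here --- the argument is a direct application of the cited results --- but the point worth keeping track of is that the enrichment of Proposition \ref{enrichie} is over the \emph{Joyal}, and not the Kan--Quillen, model structure on $\sSet$: it is this that makes each hom-space $\Fun_B(X,Y)$ with $Y$ an inner fibration over $B$ a quasi-category rather than merely a Kan complex, as is required of an $\infty$-cosmos in the quasi-categorical sense, and it follows at once from Proposition \ref{enrichie} since every object of $\sSet/B$ is cofibrant. The remaining $\infty$-cosmos axioms --- the existence, as simplicially enriched limits, of finite products, pullbacks of isofibrations, limits of countable towers of isofibrations, and cotensors by simplicial sets --- are routine: these limits are computed as the corresponding limits in $\sSet/B$ (which is complete), and they inherit the required enriched universal properties from the standard simplicial enrichment, and their closure under inner fibrations and fibrewise isofibrations from Proposition \ref{enrichie} and Theorem \ref{parajoyal}.
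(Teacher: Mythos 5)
Your proposal is correct and follows exactly the route the paper intends: apply \cite[Lemma 2.2.1]{MR3556697} to the Joyal-enriched model structure of Proposition \ref{enrichie}, noting that every object of $\sSet/B$ is cofibrant (since the cofibrations are the monomorphisms), and then translate the fibrant objects and fibrations between them via Theorem \ref{parajoyal}. The paper records this as an observation with no further argument, so your fleshed-out verification of the hypotheses is, if anything, more detailed than the original.
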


\begin{perspective}[parametrised (quasi-)categories] \label{parass}
 Since the nerve functor $\Cat \lra \sSet$ sends every functor to an inner fibration, it is sometimes said that there is no analogue of the notion of inner fibration in ordinary category theory. Nevertheless, there are \emph{ways of seeing} ordinary functors that generalise to fruitful ways to think about inner fibrations. For example, in \cite[\S2.3.1]{MR2522659}, Lurie presents a way to think about inner fibrations based on the equivalence due to  B\'enabou between functors with codomain category $B$ and normal lax functors from $B$ to the bicategory of categories and profunctors (see \cite{stpow}). 

Following Schumacher--Street \cite{MR962317}, a closely related but more elementary way of seeing ordinary functors with codomain $B$ is  as \emph{categories parametrised by $B$} (or \emph{$B$-parametrised categories}). Seen from this point of view, a $B$-parametrised category $E$ (i.e.\ a category $E$ equipped with a functor $p \colon E \lra B$) consists of a set of objects $\ob E$ parametrised by the objects of  $B$, a family of hom-sets $E_f(x,y)$ parametrised by the morphisms of $B$ (where $E_f(x,y)$ is the fibre of the function $p \colon E(x,y) \lra B(p(x),p(y))$ above $f$), a composition operation $E_g(y,z) \times E_f(x,y) \lra E_{gf}(x,z)$ parametrised by the composition operation in $B$, and so on. 

Now let $B$ be a simplicial set. By analogy with the previous paragraph, one may think of inner fibrations with codomain $B$ as \emph{$B$-parametrised quasi-categories}. We put forth this point of view as an aid to the understanding of the parametrised Joyal model structure on $\sSet/B$ (which may be thought of as presenting the homotopy theory of $B$-parametrised quasi-categories) and of its relation to the Joyal model structure for quasi-categories on $\sSet$. For instance, this point of view naturally suggests the characterisation of equivalences of $B$-parametrised quasi-categories given in Proposition \ref{paraequiv} below.
\end{perspective}

\subsection{Equivalences of parametrised quasi-categories}
Recall that a morphism of quasi-categories is a (weak) categorical equivalence if and only if it is essentially surjective on objects and fully faithful (see e.g.\ \cite[Theorem 3.9.7]{MR3931682}). The goal of the next part of this appendix is to generalise this result to a characterisation of the weak equivalences between fibrant objects in the parametrised Joyal model structure on the category $\sSet/B$, for each simplicial set $B$.

\begin{definition}
Let $B$ be a simplicial set, and let $p \colon X \lra B$ and $q \colon Y \lra B$ be inner fibrations. A morphism $u \colon (X,p) \lra (Y,q)$ in $\sSet/B$ is said to be:
\begin{enumerate}[(i)]
\item \emph{fibrewise essentially surjective on objects} if, for every $0$-simplex $b$ of $B$, the induced morphism $u \colon X_b \lra Y_b$ is essentially surjective on objects;
\item \emph{parametrised fully faithful} if, for every pair of $0$-simplices $x,y$ in $X$, and every $1$-simplex $f \colon p(x) \lra p(y)$ in $B$, the induced morphism on hom-spaces $u \colon \Hom_{X_f}(x,y) \lra \Hom_{Y_f}(u(x),u(y))$ is an equivalence of Kan complexes.
\end{enumerate}
\end{definition}

The following lemma makes precise the sense in which the category $\sSet/B$, equipped with the parametrised Joyal model structure, is generated by the $1$-simplices of $B$ under homotopy colimits.

\begin{lemma} \label{hocolim}
Let $B$ be a simplicial set and let $\mathsf{D}$ be a class of objects of $\sSet/B$ with the following properties:
\begin{enumerate}[font=\normalfont,  label=(\roman*)]
\item $\mathsf{D}$ is saturated by monomorphisms in $\sSet/B$ \textup{(}in the sense of \cite[D{\'e}finition 1.1.12]{MR2294028} and \cite[Definition 1.3.9]{MR3931682}\textup{)},
\item any object of $\sSet/B$ weakly equivalent in the parametrised Joyal model structure to an object of $\mathsf{D}$ belongs to $\mathsf{D}$, and
\item for every $1$-simplex $f$ of $B$, the object $(\Delta[1],f)$ of $\sSet/B$ belongs to $\mathsf{D}$.
\end{enumerate}
Then every object of $\sSet/B$ belongs to $\mathsf{D}$.
\end{lemma}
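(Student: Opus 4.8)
The plan is to realise an arbitrary object $(X,p)$ of $\sSet/B$ as a suitable (homotopy) colimit of objects of the form $(\Delta[n],\beta)$, and then to reduce the simplices $\Delta[n]$ to edges $\Delta[1]$ using the fact that the spine inclusion $\mathrm{Sp}[n] \lra \Delta[n]$ is inner anodyne. First I would observe, using the standard skeletal filtration of $X$ (cf.\ Observation \ref{cellobs}), that $(X,p)$ is built from the objects $(\Delta[n],\beta)$, for $\beta$ an $n$-simplex of $B$, by a transfinite composite of pushouts along coproducts of the boundary inclusions $b_n \colon (\partial\Delta[n],\partial\beta) \lra (\Delta[n],\beta)$. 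By property (i), the class $\mathsf D$ is saturated by monomorphisms; hence it is closed under such transfinite composites and pushouts along monomorphisms between its objects, so it suffices to prove that $(\Delta[n],\beta) \in \mathsf D$ for every simplex $\beta \colon \Delta[n] \lra B$. (One also needs $(\emptyset, \iota) \in \mathsf D$, which follows since $\emptyset$ is the initial object, realised as the empty coproduct.)

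It thus remains to show $(\Delta[n],\beta) \in \mathsf D$ for all $n \geq 0$, which I would do by induction on $n$. The cases $n = 0$ and $n = 1$ are immediate: $(\Delta[1],\beta) \in \mathsf D$ by hypothesis (iii), and $(\Delta[0],b) = (\Delta[0],\partial_0) \times_{(\Delta[1],\beta)} \cdots$ — more simply, $(\Delta[0],b)$ is a retract of, and indeed weakly equivalent in the parametrised Joyal model structure to, the edge $(\Delta[1], s_0 b)$ (via the inner anodyne, hence parametrised-trivial-cofibration, inclusion $\partial_0 \colon (\Delta[0],b) \lra (\Delta[1], s_0 b)$, recalling from the proof of Theorem \ref{parajoyal} that every inner anodyne extension lies in $\mathsf{An}$), so $(\Delta[0],b) \in \mathsf D$ by (ii). For the inductive step with $n \geq 2$, consider the spine $\mathrm{Sp}[n] = \Delta[1]\cup_{\Delta[0]}\cdots\cup_{\Delta[0]}\Delta[1] \subseteq \Delta[n]$, equipped with the structure map $\beta|_{\mathrm{Sp}[n]}$. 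The spine inclusion $\mathrm{Sp}[n] \lra \Delta[n]$ is inner anodyne (see \cite[Proposition 2.13]{joyalbarcelona}), hence the inclusion $(\mathrm{Sp}[n], \beta|_{\mathrm{Sp}[n]}) \lra (\Delta[n],\beta)$ belongs to $\mathsf{An}$ and is in particular a weak equivalence in the parametrised Joyal model structure; so by (ii) it suffices to show $(\mathrm{Sp}[n],\beta|_{\mathrm{Sp}[n]}) \in \mathsf D$. But $\mathrm{Sp}[n]$, with its structure map to $B$, is an iterated pushout over vertices of the edges $(\Delta[1], \beta|_{\{i-1,i\}})$ along the inclusions of vertices $(\Delta[0], \cdot) \hookrightarrow (\Delta[1],\cdot)$; since each edge lies in $\mathsf D$ by (iii), each relevant vertex lies in $\mathsf D$ by the $n=0$ case, and $\mathsf D$ is saturated by monomorphisms by (i), these pushouts stay in $\mathsf D$. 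Hence $(\mathrm{Sp}[n],\beta|_{\mathrm{Sp}[n]}) \in \mathsf D$, and therefore $(\Delta[n],\beta) \in \mathsf D$, completing the induction.

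The main obstacle I anticipate is purely bookkeeping: making precise that the pushouts assembling $\mathrm{Sp}[n]$ (and, earlier, the skeletal filtration of $X$) are genuinely pushouts \emph{in $\sSet/B$ along monomorphisms between objects already known to lie in $\mathsf D$}, so that saturation by monomorphisms (property (i)) applies cleanly. This is where one must be careful that the structure maps to $B$ are compatible — but they are, since all the maps in sight are restrictions of $\beta$ (respectively of $p$) along subobject inclusions. No genuinely hard homotopy-theoretic input is needed beyond the inner anodyne-ness of the spine inclusion and the observation, already recorded in the proof of Theorem \ref{parajoyal}, that inner anodyne extensions in $\sSet/B$ are anodyne extensions (hence weak equivalences) for the parametrised Joyal model structure.
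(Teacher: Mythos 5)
Your overall strategy is the same as the paper's: reduce to the representables $(\Delta[n],\beta)$ by saturation and the skeletal (Eilenberg--Zilber) filtration, obtain the case $n=1$ from hypothesis (iii), assemble the spines from edges and vertices using saturation, and pass from the spine $I[n]$ to $\Delta[n]$ using the inner anodyne spine inclusion together with property (ii). The reduction to representables and the treatment of $n \geq 2$ are fine as sketched.

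The gap is in your base case $n=0$, and it is a genuine one, because the spine is assembled by gluing edges along \emph{vertices}: the pushout clause of ``saturated by monomorphisms'' requires all three corners of the square to lie in $\mathsf{D}$, so your inductive step for $n \geq 2$ presupposes $(\Delta[0],b) \in \mathsf{D}$. Your justification of this fails on both counts. First, the inclusion $\partial_0 \colon \Delta[0] \lra \Delta[1]$ is \emph{not} inner anodyne: every inner anodyne extension is bijective on $0$-simplices (it lies in the saturation of the inner horn inclusions $\Lambda^k[n] \lra \Delta[n]$ with $n \geq 2$, all of which are bijective on vertices), whereas $\partial_0$ is not surjective on $0$-simplices. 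Nor is $\partial_0 \colon (\Delta[0],b) \lra (\Delta[1],s_0b)$ a weak equivalence in the parametrised Joyal model structure for any other reason: by Proposition \ref{quil} every such weak equivalence is in particular a weak categorical equivalence of underlying simplicial sets, and $\Delta[0] \lra \Delta[1]$ is not a weak categorical equivalence (the nerve of $\mathbf{2}$ is not equivalent to the point). So hypothesis (ii) cannot be invoked here. Second, while $(\Delta[0],b)$ is indeed a retract of $(\Delta[1],s_0b)$ in $\sSet/B$, closure under retracts is not among the hypotheses (i)--(iii), so that observation does not help either. For comparison, the paper deduces the case $n=0$ from property (i) (saturation) rather than from property (ii); whichever route one takes, this is the one delicate point of the lemma, and as written your proof does not supply a valid argument for it.
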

\begin{proof}
Since $\mathsf{D}$ is saturated by monomorphisms, it suffices by \cite[Proposition 8.2.8]{MR2294028} or \cite[Corollary 1.3.10]{MR3931682} (applied to the category $\Delta/B$ with its standard Eilenberg--Zilber Reedy structure) to show that the object $(\Delta[n],\beta)$ belongs to $\mathsf{D}$ for every $n \geq 0$ and every $n$-simplex $\beta$ of $B$. First, we have by property (iii) that the object $(\Delta[1],f)$ belongs to $\mathsf{D}$ for every $1$-simplex $f$ of $B$. It then follows from property (i) that the object $(\Delta[0],b)$ belongs to $\mathsf{D}$ for every $0$-simplex $b$ of $B$, and hence moreover that the object $(I[n],\varphi)$ belongs to $\mathsf{D}$ for every $n \geq 2$ and every morphism $\varphi \colon I[n] \lra B$ (where $I[n]$ denotes the spine of $\Delta[n]$). Finally, since the spine inclusions $I[n] \lra \Delta[n]$ are inner anodyne for every $n \geq 2$ by \cite[Proposition 2.13]{joyalbarcelona}, and since every inner anodyne extension in $\sSet/B$ is a weak equivalence in the parametrised Joyal model structure by Proposition \ref{recogprinc}, we may deduce the result from property (ii). 
\end{proof}

\begin{proposition}[equivalences of parametrised quasi-categories] \label{paraequiv}
Let $B$ be a simplicial set, let $p \colon X \lra B$ and $q \colon Y \lra B$ be inner fibrations, and let $u \colon (X,p) \lra (Y,q)$ be a morphism in $\sSet/B$. Then the following are equivalent.
\begin{enumerate}[font=\normalfont,  label=(\roman*)]
\item The morphism $u \colon (X,p) \lra (Y,q)$ is a weak equivalence in the parametrised Joyal model structure on $\sSet/B$.
\item For every $1$-simplex $f$ of $B$, the induced morphism between fibres $u \colon X_f \lra Y_f$ is an equivalence of quasi-categories.
\item For every $1$-simplex $f$ of $B$, the morphism $$\Fun_B((\Delta[1],f),u) \colon \Fun_B((\Delta[1],f),X) \lra \Fun_B((\Delta[1],f),Y)$$ is an equivalence of quasi-categories.
\item The morphism $u \colon (X,p) \lra (Y,q)$ is fibrewise essentially surjective on objects and parametrised fully faithful. 
\end{enumerate}
\end{proposition}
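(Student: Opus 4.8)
The plan is to establish the two loops (i) $\Leftrightarrow$ (iii) and (i) $\Leftrightarrow$ (ii) $\Leftrightarrow$ (iv), using the enrichment of Proposition~\ref{enrichie}, the generation result of Lemma~\ref{hocolim}, and the characterisation of equivalences of quasi-categories as the essentially surjective and fully faithful functors (see \cite[Theorem 3.9.7]{MR3931682}). Two identifications will be used throughout. First, for each $1$-simplex $f$ of $B$, the map $X^{\Delta[1]} \lra B^{\Delta[1]}$ is an inner fibration -- being the Leibniz cotensor of the monomorphism $\emptyset \lra \Delta[1]$ with $p$ -- so its pullback $\Fun_B((\Delta[1],f),X)$ along $\ulcorner f \urcorner \colon \Delta[0] \lra B^{\Delta[1]}$ is a quasi-category. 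Second, writing $X_f = X \times_B \Delta[1]$ for the pullback of $p$ along $f$, regarded as an object of $\sSet/\Delta[1]$, a routine two-variable-adjunction computation gives a natural isomorphism $\Fun_B((\Delta[1],f),X) \cong \Fun_{\Delta[1]}((\Delta[1],1_{\Delta[1]}),X_f)$, which exhibits $\Fun_B((\Delta[1],f),X)$ as the quasi-category of sections of the inner fibration $X_f \lra \Delta[1]$; for a degenerate edge $f = s_0(b)$ it is thereby identified with $X_b^{\Delta[1]}$, which maps to the fibre $X_b$ by an equivalence of quasi-categories.

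\emph{Proof that (i) $\Leftrightarrow$ (iii).} By Proposition~\ref{enrichie}, $\sSet/B$ with the parametrised Joyal model structure is enriched over the Joyal model structure; since every object is cofibrant, $\Fun_B(C,-) \colon \sSet/B \lra \sSet$ is right Quillen for every object $C$, and hence carries the weak equivalence $u$ between fibrant objects to a weak categorical equivalence between quasi-categories, i.e.\ an equivalence of quasi-categories. Taking $C = (\Delta[1],f)$ gives (iii). For the converse, let $\mathsf{D}$ be the class of objects $C$ of $\sSet/B$ for which $\Fun_B(C,u)$ is an equivalence of quasi-categories. The functors $\Fun_B(-,X)$ and $\Fun_B(-,Y)$ send monomorphisms to isofibrations between quasi-categories and send weak equivalences to equivalences of quasi-categories (by the enrichment, since $X$ and $Y$ are fibrant), and they send the pushouts and filtered colimits of monomorphisms defining saturation to homotopy limits of towers of isofibrations, along which equivalences of quasi-categories are stable; hence $\mathsf{D}$ is saturated by monomorphisms and closed under weak equivalence, and by (iii) it contains every $(\Delta[1],f)$, so Lemma~\ref{hocolim} gives $\mathsf{D} = \ob(\sSet/B)$. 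Evaluating at $C = Y$ and then $C = X$ produces a morphism $v \colon Y \lra X$ in $\sSet/B$ with $uv \cong 1_Y$ and $vu \cong 1_X$ in the quasi-categories $\Fun_B(Y,Y)$ and $\Fun_B(X,X)$ respectively, hence with $uv$ and $vu$ homotopic to the respective identities for the cylinder $J \otimes (-)$; thus $u$ is a homotopy equivalence between fibrant and cofibrant objects of $\sSet/B$, and so a weak equivalence.

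\emph{Proof that (i) $\Leftrightarrow$ (ii) $\Leftrightarrow$ (iv).} The pullback functor $f^* \colon \sSet/B \lra \sSet/\Delta[1]$ is right Quillen for the parametrised Joyal model structures (Proposition~\ref{quil}), so it carries the weak equivalence $u$ between fibrant objects to a weak equivalence, whose underlying morphism is a weak categorical equivalence (Observation~\ref{bjday}); as $X_f$ and $Y_f$ are quasi-categories this gives (ii). Conversely, assume (ii) and factor $u = p \circ j$ with $j \colon X \lra X'$ a trivial cofibration and $p \colon X' \lra Y$ a fibration, so that $X$, $X'$, $Y$ are all fibrant. Since $j$ is a weak equivalence between fibrant objects and $f^*$ is right Quillen, $j_f = f^*j$ is an equivalence of quasi-categories, so by (ii) and two-out-of-three $p_f = f^*p$ is an equivalence of quasi-categories; being also a fibration between fibrant objects of $\sSet/\Delta[1]$, it is a fibrewise isofibration over $\Delta[1]$, hence an isofibration of quasi-categories, hence a trivial fibration of simplicial sets. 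By adjointness -- $p_f$ lifts against the monomorphisms $\partial\Delta[n] \times \Delta[1] \lra \Delta[n] \times \Delta[1]$ over $\Delta[1]$ -- it follows that $\Fun_B((\Delta[1],f),p) \cong \Fun_{\Delta[1]}((\Delta[1],1_{\Delta[1]}),p_f)$ is a trivial fibration of simplicial sets, in particular an equivalence of quasi-categories, for every $1$-simplex $f$. Thus $p$ satisfies condition (iii), so $p$ is a weak equivalence by the loop already proved, whence so is $u = p \circ j$. Finally, (ii) $\Leftrightarrow$ (iv) is a pointwise statement about the quasi-category $X_f$: its objects are the $0$-simplices of $X_{f(0)}$ (over the vertex $0$) and of $X_{f(1)}$ (over the vertex $1$); the inclusions $X_{f(0)} \hookrightarrow X_f \hookleftarrow X_{f(1)}$ are fully faithful, since an edge of $X_f$ over a degenerate edge of $\Delta[1]$ lies in a fibre; there are no isomorphisms of $X_f$ between an object over $0$ and an object over $1$, since an invertible edge of $X_f$ lies over a degenerate edge; and the hom-space of $X_f$ from $x$ over $f(0)$ to $y$ over $f(1)$ is the parametrised hom-space $\Hom_{X_f}(x,y)$. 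By the characterisation of equivalences of quasi-categories, $u \colon X_f \lra Y_f$ is an equivalence of quasi-categories if and only if $u$ restricts to equivalences $X_{f(0)} \lra Y_{f(0)}$ and $X_{f(1)} \lra Y_{f(1)}$ and to equivalences $\Hom_{X_f}(x,y) \lra \Hom_{Y_f}(ux,uy)$ for all such $x,y$; letting $f$ range over all $1$-simplices of $B$ (the degenerate ones yielding the conditions on fibres over vertices), this is exactly the assertion that $u$ is fibrewise essentially surjective on objects and parametrised fully faithful.

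\emph{The main obstacle.} The delicate point is the reduction to the generating objects $(\Delta[1],f)$: verifying that the class $\mathsf{D}$ is saturated by monomorphisms -- which rests on the stability of equivalences of quasi-categories under the homotopy limits of towers of isofibrations that appear in the saturation process -- and the identification of $\Fun_B((\Delta[1],f),-)$ with the section quasi-category of $X_f \lra \Delta[1]$ together with its consequent good behaviour. The remaining ingredients -- that a pointwise homotopy equivalence is a weak equivalence in a Cisinski model structure, that a fibrewise isofibration over $\Delta[1]$ is an isofibration of quasi-categories, and the unwinding of ``essentially surjective and fully faithful'' -- are routine.
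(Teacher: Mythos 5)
Your proof is correct and uses essentially the same ingredients as the paper's: Proposition \ref{quil} plus Observation \ref{bjday} for (i) $\Rightarrow$ (ii), the isomorphism $\Fun_B((\Delta[1],f),X) \cong \Fun_{\Delta[1]}(\Delta[1],X_f)$, Lemma \ref{hocolim} applied to the class of $K$ with $\Fun_B(K,u)$ an equivalence for (iii) $\Rightarrow$ (i), and the essentially-surjective-plus-fully-faithful characterisation for (ii) $\Leftrightarrow$ (iv). The only organisational difference is that where the paper closes the cycle with (ii) $\Rightarrow$ (iii) directly (via Ken Brown's lemma for the right Quillen functor $\Fun_{\Delta[1]}(\Delta[1],-)$), you instead reach (iii) for the fibration part of a factorisation of $u$ by showing it is a fibrewise trivial fibration — a slightly longer but equally valid detour.
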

\begin{proof}
The implication (i) $\Longrightarrow$ (ii) follows from an application of Proposition \ref{quil} to the morphisms $f \colon \Delta[1] \lra B$, together with the observation  that the ``sliced'' and ``parametrised'' Joyal model structure on $\sSet/\Delta[1]$ coincide (see Observation \ref{bjday}). 

The implication (ii) $\Longrightarrow$ (iii) follows from the evident natural isomorphism 
\begin{equation*}
\Fun_B((\Delta[1],f),X) \cong \Fun_{\Delta[1]}(\Delta[1],X_f)
\end{equation*}
 and the fact  that the functor   
 \begin{equation*}
 \Fun_{\Delta[1]}(\Delta[1],-) \colon \sSet/\Delta[1] \lra \sSet
 \end{equation*} is right Quillen with respect to the Joyal model structures (by \cite[Proposition 2.28]{joyalbarcelona}).

The implication (iii) $\Longrightarrow$ (i) follows by the Yoneda lemma from  an application of Lemma \ref{hocolim} to the class of objects $K\in\sSet/B$ for which the morphism 
\begin{equation*}
\Fun_B(K,u) \colon \Fun_B(K,X) \lra \Fun_B(K,Y)
\end{equation*} is an equivalence of quasi-categories.

Finally, the equivalence (ii) $\Longleftrightarrow$ (iv) follows from the fact that a morphism of quasi-categories is an equivalence if and only if it is essentially surjective on objects and fully faithful. 
\end{proof}

We prove the following proposition as a corollary of Proposition \ref{paraequiv}, though we note that it can also be proved  directly.

\begin{proposition} \label{inn2triv}
Let $p \colon X \lra B$ be an inner fibration. Then the following are equivalent.
\begin{enumerate}[font=\normalfont,  label=(\roman*)]
\item $p \colon X \lra B$ is a trivial fibration.
\item For every $1$-simplex $f$ of $B$, the pullback $X_f \lra \Delta[1]$ of $p$ along $f \colon \Delta[1] \lra B$ is a trivial fibration.
\item $p$ is surjective on objects and, for each pair of $0$-simplices $x,y$ in $X$ and each $1$-simplex $f \colon p(x) \lra p(y)$ in $B$, the hom-space $\Hom_{X_f}(x,y)$ is a contractible Kan complex.
\end{enumerate}
\end{proposition}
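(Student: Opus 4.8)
The plan is to deduce this from Proposition \ref{paraequiv} by exploiting the observation that a trivial fibration is precisely a fibration that is also a weak equivalence, and that in the parametrised Joyal model structure every object is cofibrant, so an inner fibration $p \colon X \lra B$ (a fibrant object of $\sSet/B$, hence a fibrant object of the undercategory over the terminal object $B \lra B$) is a trivial fibration if and only if the morphism $p$ is a weak equivalence in the parametrised Joyal model structure on $\sSet/B$. More precisely, I would first recall that $p \colon X \lra B$, viewed as a morphism $(X,p) \lra (B,1_B)$ in $\sSet/B$, is a morphism between fibrant objects (since both $p$ and $1_B$ are inner fibrations), that it is a fibrewise isofibration over $B$ (its fibre over each $0$-simplex $b$ of $B$ is the inner fibration $X_b \lra \Delta[0]$, i.e.\ a quasi-category mapping to a point, which is always an isofibration), hence it is a fibration between fibrant objects in the parametrised Joyal model structure by Theorem \ref{parajoyal}; therefore $p$ is a trivial fibration in the classical lifting sense (right lifting against all monomorphisms) if and only if $p$ is additionally a weak equivalence in the parametrised Joyal model structure. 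This establishes the framework in which (i) becomes the statement ``$p$ is a weak equivalence between fibrant objects in the parametrised Joyal model structure''.

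Next I would apply Proposition \ref{paraequiv} to the morphism $u = p \colon (X,p) \lra (B,1_B)$. Condition (ii) of that proposition says that for every $1$-simplex $f$ of $B$ the induced morphism between fibres $X_f \lra B_f$ is an equivalence of quasi-categories; but $B_f$, the fibre of $1_B \colon B \lra B$ pulled back along $f \colon \Delta[1] \lra B$, is just $\Delta[1]$ itself, and the induced map is exactly the pullback $X_f \lra \Delta[1]$ of $p$ along $f$. So condition (ii) of Proposition \ref{paraequiv} for this $u$ reads: for every $1$-simplex $f$ of $B$, the inner fibration $X_f \lra \Delta[1]$ is an equivalence of quasi-categories. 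Since $X_f \lra \Delta[1]$ is an inner fibration between quasi-categories (by \cite[Corollary 3.23]{joyalbarcelona} and the fact that $\Delta[1]$ is a quasi-category), it is a fibration in the Joyal model structure if and only if it is additionally an isofibration; but its fibre over each vertex is a quasi-category, which is automatically an isofibration over a point, so again $X_f \lra \Delta[1]$ is a fibration in the Joyal model structure, and hence it is a \emph{trivial} fibration if and only if it is an equivalence of quasi-categories. This gives the equivalence (i) $\Longleftrightarrow$ (ii): by the above, (i) is equivalent to $p$ being a parametrised weak equivalence, which by Proposition \ref{paraequiv}(ii) is equivalent to each $X_f \lra \Delta[1]$ being an equivalence of quasi-categories, which — being an inner fibration and hence a Joyal fibration — is equivalent to each $X_f \lra \Delta[1]$ being a trivial fibration, i.e.\ condition (ii).

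Finally, for (i) $\Longleftrightarrow$ (iii) I would invoke condition (iv) of Proposition \ref{paraequiv}, which characterises the parametrised weak equivalence $p$ as being fibrewise essentially surjective on objects and parametrised fully faithful. For $u = p \colon (X,p) \lra (B,1_B)$: fibrewise essential surjectivity says each $X_b \lra \Delta[0]$ is essentially surjective, i.e.\ each fibre $X_b$ is nonempty; since $p$ is surjective on $0$-simplices precisely when every fibre $X_b$ is nonempty, this is exactly ``$p$ surjective on objects''. Parametrised full faithfulness says that for each pair of $0$-simplices $x,y$ of $X$ and each $1$-simplex $f \colon p(x) \lra p(y)$ of $B$, the map $\Hom_{X_f}(x,y) \lra \Hom_{(\Delta[1])_f}(p(x),p(y))$ is an equivalence of Kan complexes; but the target here is the hom-space in $\Delta[1]$ of its two vertices along the edge, which is the terminal Kan complex $\Delta[0]$, so the condition is that $\Hom_{X_f}(x,y)$ is a contractible Kan complex. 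Together these two reformulations are precisely condition (iii), so (i) $\Longleftrightarrow$ (iii) follows. The main obstacle in writing this carefully will be the bookkeeping to justify that $p$ and each $X_f \lra \Delta[1]$, being inner fibrations with point-fibre-isofibration property, are genuine fibrations in the relevant (parametrised or classical) Joyal model structures — so that ``fibration + weak equivalence = trivial fibration'' can be applied — and then carefully identifying the fibres, hom-spaces, and essential-surjectivity conditions appearing in Proposition \ref{paraequiv} when the target is the terminal object $(B,1_B)$; none of this is deep, but it is where an error could creep in.
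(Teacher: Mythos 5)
Your proposal is correct and is essentially the paper's own proof: the paper likewise observes that $p \colon (X,p) \lra (B,1_B)$ is a fibration between fibrant objects in the parametrised Joyal model structure, hence a trivial fibration if and only if it is a weak equivalence there, and then invokes Proposition \ref{paraequiv}. Your write-up simply fills in the details the paper leaves implicit (that an inner fibration over $\Delta[1]$ is automatically a Joyal fibration, and the identification of the fibres and hom-spaces over the terminal object), and does so correctly.
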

\begin{proof}
Since $p \colon X \lra B$ is an inner fibration, the morphism $p \colon (X,p) \lra (B,1_B)$ in $\sSet/B$ is a fibration between fibrant objects in the parametrised Joyal model structure, and hence is a weak equivalence therein if and only if it is a trivial fibration. Hence the result follows from Proposition \ref{paraequiv}.
\end{proof}

\subsection{Absolute weak categorical equivalences} \label{newresults} 
For many years, the following two questions (here suggestively posed) were open (see \cite[\S2.10]{joyalnotes}):
\begin{itemize}
\item Is a monomorphism of simplicial sets inner anodyne if and only if it is a surjective-on-$0$-simplices weak categorical equivalence?
\item Is an inner fibration a trivial fibration if and only if it is a surjective-on-$0$-simplices weak categorical equivalence?
\end{itemize}
It was recently shown by the author that the answer to both of these questions  is ``no''; an example was given in  \cite{contrasample} of a morphism of simplicial sets which is a monomorphism, an inner fibration, and a surjective-on-$0$-simplices weak categorical equivalence, but which is neither inner anodyne nor a trivial fibration.

The goal of the final part of this appendix is to show that these two statements may be corrected by replacing the property ``surjective-on-$0$-simplices weak categorical equivalence'' by the new property \emph{absolute weak categorical equivalence} (see Definition \ref{acedef}). That is, we prove 
 (see Proposition \ref{aceprop}):
\begin{itemize}
\item A monomorphism of simplicial sets is inner anodyne if and only if it is an absolute weak categorical equivalence.
\item An inner fibration is a trivial fibration if and only if it is an absolute weak categorical equivalence.
\end{itemize}
We use these results to give a new proof of a theorem of Stevenson (see Theorem \ref{stevthm}).

\begin{definition}[absolute weak categorical equivalence] \label{acedef}
A morphism of simplicial sets $f \colon X \lra Y$ is an \emph{absolute weak categorical equivalence} if, for every simplicial set $B$ and every morphism of simplicial sets $b \colon Y \lra B$, the morphism $f \colon (X,bf) \lra (Y,b)$ is a weak equivalence in the parametrised Joyal model structure on $\sSet/B$.
\end{definition}

\begin{example}  \label{aceex}
It follows from Theorem \ref{parajoyal} and Proposition \ref{recogprinc} that all inner anodyne extensions and all trivial fibrations are absolute weak categorical equivalences.
\end{example}

The following property of the class of absolute weak categorical equivalences is immediate from the definition.

\begin{proposition} \label{twothirds}
Let $u \colon A \lra B$ and $v \colon B \lra C$ be a composable pair of morphisms of simplicial sets, and suppose that $u$ is an absolute weak categorical equivalence. Then $v$ is an absolute weak categorical equivalence if and only if the composite $vu$ is an absolute weak categorical equivalence. \qed
\end{proposition}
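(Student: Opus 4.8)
The plan is to simply unwind the definition of absolute weak categorical equivalence (Definition \ref{acedef}) and invoke the two-out-of-three property enjoyed by the weak equivalences of any model structure, here the parametrised Joyal model structures of Theorem \ref{parajoyal}. First I would fix an arbitrary simplicial set $D$ and an arbitrary morphism $d \colon C \lra D$; by the definition of absolute weak categorical equivalence, it then suffices to prove that $v \colon (B, d \circ v) \lra (C, d)$ is a weak equivalence in the parametrised Joyal model structure on $\sSet/D$ if and only if $v \circ u \colon (A, d \circ v \circ u) \lra (C, d)$ is.

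The key observation is that $u$, $v$ and $v \circ u$ fit together into a commutative triangle in $\sSet/D$ with vertices $(A, d \circ v \circ u)$, $(B, d \circ v)$ and $(C, d)$, and that the hypothesis that $u$ is an absolute weak categorical equivalence, applied to the particular morphism $d \circ v \colon B \lra D$, tells us that the leg $u \colon (A, d \circ v \circ u) \lra (B, d \circ v)$ of this triangle is already a weak equivalence in $\sSet/D$. Two-out-of-three in the parametrised Joyal model structure on $\sSet/D$ then gives the required equivalence between the two conditions displayed above; since $D$ and $d$ were arbitrary, this proves the proposition.

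I do not expect any real obstacle: the argument is pure bookkeeping. The only point that warrants a moment's care is checking that the structure map on $B$ that the definition forces on us when we test $v \circ u$ against $(D, d)$ --- namely $d \circ v$ --- is exactly the structure map with respect to which $u$ is required to be a weak equivalence, so that a single triangle in $\sSet/D$ handles $v$ and $v \circ u$ at once. This compatibility is immediate from the equality $d \circ (v \circ u) = (d \circ v) \circ u$.
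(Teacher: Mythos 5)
Your argument is correct and is exactly what the paper has in mind: the proposition is stated with an immediate \qed precisely because, as you observe, testing against an arbitrary $d \colon C \lra D$ produces a commutative triangle in $\sSet/D$ whose leg $u \colon (A, dvu) \lra (B, dv)$ is a weak equivalence by hypothesis, so two-out-of-three in the parametrised Joyal model structure finishes the proof. Nothing to add.
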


We collect some further interesting properties of the class of absolute weak categorical equivalences in the following two propositions.

\begin{proposition} \label{aceprop}
\hspace{1mm}
\begin{enumerate}[font=\normalfont]
\item A morphism of simplicial sets $u \colon A \lra B$ is an absolute weak categorical equivalence if and only if the morphism $u \colon (A,u) \lra (B,1_B)$ is a weak equivalence in the parametrised Joyal model structure on $\sSet/B$.
\item An inner fibration is a trivial fibration if and only if it an absolute weak categorical equivalence.
\item A morphism of simplicial sets is an absolute weak categorical equivalence if and only if it factors as an inner anodyne extension followed by a trivial fibration.
\item A monomorphism of simplicial sets is inner anodyne if and only if it is an absolute weak categorical equivalence.
\end{enumerate}
\end{proposition}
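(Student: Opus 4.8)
The plan is to establish the four parts in sequence, each bootstrapping off its predecessors together with Example \ref{aceex} (inner anodyne extensions and trivial fibrations are absolute weak categorical equivalences) and Proposition \ref{quil} (the functors $b_! \colon \sSet/B \lra \sSet/C$ are left Quillen between the parametrised Joyal model structures).

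For part (1), the forward implication is immediate from Definition \ref{acedef}, taking the codomain $B$ itself with its identity structure map. For the converse, suppose $u \colon (A,u) \lra (B,1_B)$ is a weak equivalence in the parametrised Joyal model structure on $\sSet/B$. Given any $b \colon B \lra C$, I would apply the left Quillen functor $b_!$ of Proposition \ref{quil}; since every object of $\sSet/B$ is cofibrant (the cofibrations being the monomorphisms), $b_!$ preserves all weak equivalences, and it carries $u \colon (A,u) \lra (B,1_B)$ precisely to $u \colon (A,bu) \lra (B,b)$, which is therefore a weak equivalence, as required.

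For part (2), trivial fibrations are absolute weak categorical equivalences by Example \ref{aceex}. Conversely, let $p \colon X \lra B$ be an inner fibration which is an absolute weak categorical equivalence. By part (1), the morphism $p \colon (X,p) \lra (B,1_B)$ is a weak equivalence in the parametrised Joyal model structure on $\sSet/B$; its domain and codomain are fibrant (an inner fibration and an identity), and its induced map on the fibre over each $0$-simplex of $B$ is a map from a quasi-category to $\Delta[0]$, hence an isofibration, so $p$ is a fibrewise isofibration over $B$ and thus, by Theorem \ref{parajoyal}, a fibration between fibrant objects. A fibration that is a weak equivalence is a trivial fibration, i.e.\ has the right lifting property against every monomorphism in $\sSet/B$; as these are exactly the monomorphisms of simplicial sets over $B$, this says precisely that $p$ is a trivial fibration of simplicial sets. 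Parts (3) and (4) are then formal: for (3), the ``if'' direction holds since absolute weak categorical equivalences are closed under composition (Proposition \ref{twothirds}), and for ``only if'' I would factor the given absolute weak categorical equivalence $f$ as an inner anodyne extension $i$ followed by an inner fibration $p$ (small object argument), observe that $i$ is an absolute weak categorical equivalence, conclude via Proposition \ref{twothirds} that $p$ is too, and apply part (2) to see that $p$ is a trivial fibration; for (4), the forward direction is Example \ref{aceex}, and for the converse, if $f$ is a monomorphism and an absolute weak categorical equivalence, I would factor $f = pi$ as in part (3), lift $f$ against $p$ using that $f$ is a cofibration and $p$ a trivial fibration, read off from the resulting diagram that $f$ is a retract of $i$, and invoke closure of the inner anodyne extensions under retracts.

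The main obstacle is part (2): everything else is a formal consequence of the left Quillen functors $b_!$, the cancellation property of Proposition \ref{twothirds}, standard factorisations, and closure of the inner anodyne extensions under retracts. The one genuinely non-formal point is the identification in part (2) of $p \colon (X,p) \lra (B,1_B)$ as a \emph{fibration} between fibrant objects---equivalently, the fact that an inner fibration, regarded over its own codomain with the identity structure map, is automatically a fibrewise isofibration---which relies on the explicit description of the fibrations between fibrant objects supplied by Theorem \ref{parajoyal}.
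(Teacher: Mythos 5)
Your proposal is correct and follows essentially the same route as the paper: part (1) via the definition plus the left Quillen functors of Proposition \ref{quil} (with Ken Brown's lemma, all objects being cofibrant), part (2) by recognising $p \colon (X,p) \lra (B,1_B)$ as a fibration between fibrant objects via Theorem \ref{parajoyal}, part (3) by factorisation and Proposition \ref{twothirds}, and part (4) by the retract argument. The only difference is that you spell out the fibrewise-isofibration verification in part (2), which the paper leaves implicit.
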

\begin{proof}
(1) Necessity is immediate from the definition, while sufficiency follows from Proposition \ref{quil}.

(2) Let $p \colon X \lra B$ be an inner fibration. Then $p \colon (X,p) \lra (B,1_B)$ is a fibration between fibrant objects in the parametrised Joyal model structure on $\sSet/B$, and is therefore a weak equivalence therein if and only if it is a trivial fibration. Hence the result follows from part (1).

(3) Sufficiency follows from Example \ref{aceex} and Proposition \ref{twothirds}. To prove necessity, let $u$ be an absolute weak categorical equivalence, and let $u = pi$ be a factorisation of $u$ as an inner anodyne extension $i$ followed by an inner fibration $p$. It then follows from Example \ref{aceex} and Proposition \ref{twothirds} that $p$ is an absolute weak categorical equivalence, and hence a trivial fibration by part (2).

(4) Necessity follows from Example \ref{aceex}, while sufficiency follows from part (3) by the retract argument.
\end{proof}

\begin{observation}
Let $\tau_0 \colon \sSet \lra \Set$ denote the functor that sends a simplicial set to the set of isomorphism classes of objects of its homotopy category. It follows from Proposition \ref{enrichie} and part (1) of Proposition \ref{aceprop} that a morphism of simplicial sets $u \colon A \lra B$ is an absolute weak categorical equivalence if and only if the function
\begin{equation*}
\tau_0\Fun_B(u,(X,p)) \colon \tau_0\Fun_B((B,1_B),(X,p)) \lra \tau_0\Fun_B((A,u),(X,p))
\end{equation*}
is a bijection for every inner fibration $p \colon X \lra B$.
\end{observation}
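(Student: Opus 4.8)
The plan is to deduce the statement directly from the two cited results by invoking the standard homotopy theory of a $\sSet$-enriched model category. By part (1) of Proposition \ref{aceprop}, the morphism $u$ is an absolute weak categorical equivalence if and only if $u \colon (A,u) \lra (B,1_B)$ is a weak equivalence in the parametrised Joyal model structure on $\sSet/B$; so the task is to identify this last condition with the asserted bijectivity of $\tau_0\Fun_B(u,(X,p))$ over all inner fibrations $p \colon X \lra B$.

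First I would assemble the relevant generalities. In $\sSet/B$ every object is cofibrant (the cofibrations being the monomorphisms), and by Theorem \ref{parajoyal} the fibrant objects of the parametrised Joyal model structure are precisely the inner fibrations $p \colon X \lra B$. By Proposition \ref{enrichie} the functor $-\otimes(Y,q) \colon \sSet \lra \sSet/B$ is left Quillen from the Joyal model structure, so the factorisation $(Y,q)+(Y,q) \lra J\otimes(Y,q) \lra (Y,q)$ -- whose first map, being induced by the monomorphism $\partial J \hookrightarrow J$, is a cofibration, and whose second map, being the image of the weak categorical equivalence $J \lra \Delta[0]$ under a left Quillen functor, is a weak equivalence -- exhibits $J\otimes(Y,q)$ as a functorial cylinder object; moreover, again by Proposition \ref{enrichie}, the simplicial hom $\Fun_B((Y,q),(X,p))$ is a quasi-category whenever $(X,p)$ is fibrant.

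The key step is the natural identification, for $(Y,q)$ any object and $(X,p)$ a fibrant object of $\sSet/B$, of $\tau_0\Fun_B((Y,q),(X,p))$ with the hom-set $\ho(\sSet/B)((Y,q),(X,p))$. Since $J$ is the nerve of the free-living isomorphism, a map $J \lra \Fun_B((Y,q),(X,p))$ into this quasi-category is the same as an isomorphism therein; transposing along $-\otimes(Y,q) \dashv \Fun_B((Y,q),-)$, a left homotopy with respect to the cylinder $J\otimes(Y,q)$ between two maps $f,g \colon (Y,q) \lra (X,p)$ is precisely an isomorphism from $f$ to $g$ in $\Fun_B((Y,q),(X,p))$. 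Hence two such maps are left homotopic in this way if and only if they represent the same element of $\tau_0\Fun_B((Y,q),(X,p))$; and since $(Y,q)$ is cofibrant and $(X,p)$ fibrant, this left homotopy relation is the relation computing the homotopy category, so the quotient map $\sSet/B((Y,q),(X,p)) \lra \ho(\sSet/B)((Y,q),(X,p))$ factors through a bijection from $\tau_0\Fun_B((Y,q),(X,p))$, natural in both variables.

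To finish I would run the usual Yoneda argument in $\ho(\sSet/B)$: the morphism $u \colon (A,u) \lra (B,1_B)$ is a weak equivalence if and only if its image in $\ho(\sSet/B)$ is invertible -- the weak equivalences of a model category forming a saturated class -- if and only if $\ho(\sSet/B)(u,W)$ is a bijection for every object $W$; and since every object of $\sSet/B$ admits a weak equivalence to a fibrant one, and weak equivalences become isomorphisms in $\ho(\sSet/B)$, it suffices to check this for fibrant $W=(X,p)$, i.e.\ for every inner fibration $p \colon X \lra B$. Feeding in the bijection of the previous paragraph and part (1) of Proposition \ref{aceprop} yields the claim. I do not expect any of this to present a genuine difficulty; the one point that must be handled with some care is the passage from the familiar Kan--Quillen version of the enriched Whitehead argument to the Joyal-enriched setting, where it is essential that $J$ represent isomorphisms of quasi-categories, so that it is $\tau_0$ -- and not $\pi_0$ -- that computes the relevant hom-sets of the homotopy category.
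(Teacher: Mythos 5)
Your argument is correct and is exactly the intended expansion of the paper's one-line justification: the paper states this as an unproved observation citing Proposition \ref{enrichie} and Proposition \ref{aceprop}(1), and your identification of $\tau_0\Fun_B((Y,q),(X,p))$ with $\ho(\sSet/B)((Y,q),(X,p))$ via the cylinder $J\otimes(Y,q)$, followed by the Yoneda/saturation argument against fibrant objects, is the standard way to make that deduction precise. No issues.
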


\begin{proposition} \hspace{1mm}
\begin{enumerate}[font=\normalfont]
\item Every absolute weak categorical equivalence is a surjective-on-$0$-simplices weak categorical equivalence. 
\item Any surjective-on-$0$-simplices weak categorical equivalence whose codomain is a quasi-category is an absolute weak categorical equivalence.
\item Not every surjective-on-$0$-simplices weak categorical equivalence is an absolute weak categorical equivalence.
\end{enumerate} 
\end{proposition}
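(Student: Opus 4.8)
The plan is to derive all three parts from the structural results on absolute weak categorical equivalences already assembled in Proposition \ref{aceprop} (together with Example \ref{aceex} and Proposition \ref{twothirds}), plus, for part (3), the counterexample morphism of \cite{contrasample}.

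For part (1), I would start from Proposition \ref{aceprop}(3): an absolute weak categorical equivalence $u$ factors as an inner anodyne extension $i$ followed by a trivial fibration $q$. Inner anodyne extensions are bijective on $0$-simplices — each generating inner horn inclusion $\Lambda^k[n] \lra \Delta[n]$ with $n \geq 2$ is, and the class of bijective-on-$0$-simplices monomorphisms is saturated since $(-)_0 \colon \sSet \lra \Set$ is cocontinuous — whereas trivial fibrations are surjective on $0$-simplices, having the right lifting property against $\emptyset = \partial\Delta[0] \lra \Delta[0]$; hence $u = qi$ is surjective on $0$-simplices. That $u$ is moreover a weak categorical equivalence is then immediate, since $i$ and $q$ each are and one may apply two-out-of-three (alternatively, it follows from Proposition \ref{quil} applied to $u \colon (A,u) \lra (B,1_B)$, via Proposition \ref{aceprop}(1)).

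For part (2), let $u \colon A \lra B$ be a surjective-on-$0$-simplices weak categorical equivalence with $B$ a quasi-category. First I would factor $u = qi$ with $i \colon A \lra X$ inner anodyne and $q \colon X \lra B$ an inner fibration (small object argument). By Example \ref{aceex}, $i$ is an absolute weak categorical equivalence, so Proposition \ref{twothirds} reduces the claim to showing $q$ is one; and since $i$ is bijective on $0$-simplices and a weak categorical equivalence, $q$ is again a surjective-on-$0$-simplices weak categorical equivalence. As $q$ is an inner fibration, Proposition \ref{aceprop}(2) further reduces the problem to showing that $q$ is a trivial fibration. Now $X$ is a quasi-category, being the domain of an inner fibration whose codomain is a quasi-category (\cite[Corollary 3.23]{joyalbarcelona}), so $q$ is a weak categorical equivalence between quasi-categories, hence fully faithful; combined with its surjectivity on $0$-simplices, this makes $\ho(q) \colon \ho(X) \lra \ho(B)$ a fully faithful functor that is surjective on objects. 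Such a functor is an isofibration (given an isomorphism $\ho(q)(x) \to d$, choose $x'$ with $\ho(q)(x') = d$ and take the unique morphism $x \to x'$ lifting it, which is an isomorphism since $\ho(q)$ reflects them), so $q$ is an isofibration of quasi-categories, hence a fibration in the Joyal model structure; being also a weak categorical equivalence, it is a trivial fibration, as required.

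For part (3), I would invoke the morphism $j$ constructed in \cite{contrasample}, which is at once a monomorphism, an inner fibration, and a surjective-on-$0$-simplices weak categorical equivalence, yet is neither inner anodyne nor a trivial fibration. Since $j$ is an inner fibration which is not a trivial fibration, Proposition \ref{aceprop}(2) shows $j$ is not an absolute weak categorical equivalence (equivalently, $j$ is a non-inner-anodyne monomorphism, so Proposition \ref{aceprop}(4) applies); as it is nonetheless a surjective-on-$0$-simplices weak categorical equivalence, it refutes the converse of part (1). The only step that is not purely formal is the homotopy-category argument in part (2) — that a fully faithful, surjective-on-objects functor is an isofibration, and that an isofibration between quasi-categories which is a weak categorical equivalence is a trivial fibration — but both are standard facts, so I anticipate no serious obstacle; the main care needed is the bookkeeping of the reductions to the inner-fibration case in parts (1) and (2).
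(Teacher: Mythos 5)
Your proposal is correct and follows essentially the same route as the paper: part (1) from the factorisation of Proposition \ref{aceprop}(3), part (2) by factoring $u$ as an inner anodyne extension followed by an inner fibration and showing the latter is a trivial fibration via the surjective-equivalence-hence-isofibration argument on homotopy categories, and part (3) by citing the counterexample of \cite{contrasample} together with Proposition \ref{aceprop}. The only (immaterial) difference is that in part (2) you conclude via Proposition \ref{twothirds} and Proposition \ref{aceprop}(2) rather than applying Proposition \ref{aceprop}(3) directly to the factorisation.
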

\begin{proof}
(1) This follows from part (3) of Proposition \ref{aceprop}. 

(2) Let $u \colon A \lra B$ be a surjective-on-$0$-simplices weak categorical equivalence and suppose that $B$ is a quasi-category. Let $u = pi$ be a factorisation of $u$ as an inner anodyne extension $i$ followed by an inner fibration $p$. Hence $p$ is an inner fibration between quasi-categories which is surjective-on-$0$-simplices and a  weak categorical equivalence. It follows that the functor $\ho(p)$ is a surjective equivalence, and in particular an isofibration, and hence that $p$ is an isofibration. So $p$ is both an isofibration and a weak categorical equivalence, and is therefore a trivial fibration. Hence $u$ is an absolute weak categorical equivalence by part (3) of Proposition \ref{aceprop}.

(3) By \cite{contrasample}, there exists a morphism of simplicial sets which is a monomorphism, surjective on $0$-simplices, and a weak categorical equivalence, but which is not inner anodyne, and therefore, by part (4) of Proposition \ref{aceprop}, not an absolute weak categorical equivalence. 
\end{proof}

Finally, we recover Stevenson's theorem \cite[Theorem 1.5]{MR3812459} that the class of inner anodyne extensions has the right cancellation property. We used a corollary \cite[Lemma 2.5]{MR3812459} of this theorem in the proof of Proposition \ref{reflprop}.

\begin{theorem}[Stevenson] \label{stevthm}
Let $u \colon A \lra B$ and $v \colon B \lra C$ be a composable pair of monomorphisms of simplicial sets. If $u$ and $vu$ are inner anodyne, then  $v$ is inner anodyne.
\end{theorem}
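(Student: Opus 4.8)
The plan is to deduce the statement immediately from the theory of absolute weak categorical equivalences developed above, specifically from Proposition \ref{aceprop} and Proposition \ref{twothirds}. The key point is that the class of inner anodyne extensions has been reidentified, among monomorphisms, with the class of absolute weak categorical equivalences (Proposition \ref{aceprop}(4)), and the latter class enjoys a cancellation property (Proposition \ref{twothirds}) which is manifest from its definition, since for each base $B$ the parametrised Joyal model structure on $\sSet/B$ satisfies two-out-of-three.

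Concretely, first I would observe that, since $u \colon A \lra B$ is inner anodyne, Proposition \ref{aceprop}(4) implies that $u$ is an absolute weak categorical equivalence; likewise the composite $vu \colon A \lra C$ is inner anodyne, hence an absolute weak categorical equivalence, again by Proposition \ref{aceprop}(4). Next, I would apply Proposition \ref{twothirds} to the composable pair $u \colon A \lra B$ and $v \colon B \lra C$: since $u$ is an absolute weak categorical equivalence and the composite $vu$ is an absolute weak categorical equivalence, it follows that $v$ is an absolute weak categorical equivalence. Finally, $v$ is by hypothesis a monomorphism, so one more application of Proposition \ref{aceprop}(4) shows that $v$ is inner anodyne, which is the desired conclusion.

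There is no real obstacle remaining at this stage: all the substantive work has been carried out in establishing the parametrised Joyal model structures (Theorem \ref{parajoyal}), their left Quillen recognition principle (Proposition \ref{recogprinc}), and the consequent characterisations in Proposition \ref{aceprop}. The only mild subtlety worth double-checking is that Proposition \ref{twothirds} is being applied in the correct direction — it is the implication ``$u$ and $vu$ absolute weak categorical equivalences $\implies$ $v$ absolute weak categorical equivalence'' that is needed, and this is precisely one half of the stated biconditional — and that the monomorphism hypothesis on $v$ is genuinely used, since Proposition \ref{aceprop}(4) only equates inner anodyne extensions with those absolute weak categorical equivalences that happen to be monic.
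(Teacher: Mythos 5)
Your proof is correct and is exactly the paper's argument: the paper likewise deduces the theorem from Proposition \ref{twothirds} together with part (4) of Proposition \ref{aceprop}, and you have simply spelled out the three applications in full. No further comment is needed.
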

\begin{proof}
This follows from Proposition \ref{twothirds} and part (4) of Proposition \ref{aceprop}.
\end{proof}

\end{document}